\newif\ifcomments
\numberwithin{equation}{section}
\theoremstyle{plain}
\newtheorem{theorem}{Theorem}[section]
\newtheorem{remark}[theorem]{Remark}
\newtheorem{lemma}[theorem]{Lemma}
\newtheorem{corollary}[theorem]{Corollary}
\newtheorem{proposition}[theorem]{Proposition}
\newtheorem{notation}[theorem]{Notation}
\theoremstyle{definition}
\newtheorem{definition}[theorem]{Definition}
\def\ph{\varphi}
\def\o{\circ} 
\def\i{^{-1}} 
\def\x{\times}
\def\R{{\mathbb R}}
\def\N{{\mathbb N}}
\def\ad{\operatorname{ad}} 
\def\Ad{\operatorname{Ad}}
\def\exp{\operatorname{exp}}
\let\on=\operatorname
\newcommand{\ud}{\,\mathrm{d}}
\let\mc=\mathcal
\newcommand{\eqdef}{\ensuremath{\stackrel{\mbox{\upshape\tiny def.}}{=}}}
\begin{document}
\date{\today}
\title[Riemannian cubics on the group of diffeomorphisms]{Riemannian cubics on the group of diffeomorphisms and the Fisher-Rao metric}
\author{Rabah Tahraoui} 
\address{UMR 6085 CNRS-Université de Rouen}
\email{tahraoui@ceremade.dauphine.fr}
\author{Fran\c{c}ois-Xavier Vialard}
\address{Universit\'e Paris-Dauphine, PSL Research University, Ceremade \\ INRIA, Project team Mokaplan}
\email{fxvialard@normalesup.org}

\begin{abstract}
We study a second-order variational problem on the group of diffeomorphisms of the interval $[0,1]$ endowed with a right-invariant Sobolev metric of order $2$, which consists in the minimization of the acceleration. We compute the relaxation of the problem which involves the so-called Fisher-Rao functional, a convex functional on the space of measures. This relaxation enables the derivation of several optimality conditions and, in particular, a sufficient condition which guarantees that a given path of the initial problem is also a minimizer of the relaxed one. This sufficient condition is related to the existence of a solution to a Riccati equation involving the path acceleration.
\end{abstract}

\maketitle

\section{Introduction}
This paper is concerned with a variational problem on the group of diffeomorphisms of the segment $[0,1]$, which consists in finding a curve on the group of minimal acceleration with prescribed or relaxed boundary conditions. The motivation for studying this problem is to give a theoretical ground to formal calculations made in \cite{HOSplines1,HOSplines2} and the numerical implementation presented in \cite{SinghVN15} together with applications to medical imaging.

\subsection{Motivation and previous works}

Riemannian cubics (also called Riemannian splines) and probably more famous, its constrained alternative called Elastica belong to a class of problems that have been studied since the work of Euler (see the discussion in \cite{MumfordElastica}). Let us present the variational problem in a Riemannian setting. Riemannian splines are minimizers of 
\begin{equation}\label{SplinesFunctional}
\mathcal{J}(x) = \int_0^1 g\left( \frac{D}{Dt} \dot{x},\frac{D}{Dt}\dot{x}  \right) \ud t\,,
\end{equation}
where $(M,g)$ is a Riemannian manifold, $\frac{D}{Dt}$ is its associated covariant derivative and $x $ is a sufficiently smooth curve from $[0,1]$ in $M$ satisfying first order boundary conditions, i.e. $x(0),\dot{x}(0)$ and $x(1),\dot{x}(1)$ are fixed.
The case of Elastica consists in restricting the previous optimization problem to the set of curves that are parametrized by unit speed (when the problem is feasible), namely $g(\dot{x},\dot{x})=1$ for all time. 

This type of variational problems has been several times introduced and studied in applied mathematics \cite{birkhoff65,VariationalStudySplines,Noakes1,splinesCk,Crouch,splinesanalyse,Koiso} as well as in pure mathematics \cite{Bismut,langer1984,Bryant} and it was then extensively used and numerically developed in image processing and computer vision \cite{MumfordElastica,Masnou1,PamiSplines,Samir,CAD,Chan02eulerselastica}. 
In the past few years, higher-order models have been introduced in biomedical imaging for interpolation of a time sequence of shapes. They have been introduced in \cite{TrVi2010} for a diffeomorphic group action on a finite dimensional manifold and further developed for general invariant higher-order lagrangians in \cite{HOSplines1,HOSplines2} on a group. A numerical implementation together with a generalized model has been proposed in \cite{SinghVN15} in the context of medical imaging applications. What is still unsolved in the case of a group of diffeomorphisms, is the question of existence and regularity where the main obstacle is caused by the infinite dimensional setting. 

\par In infinite dimensions, to the best of our knowledge, only the linear case has been addressed \cite{VarSplines}. Actually, in the non-linear case, namely the case of Riemannian metrics in infinite dimensions, existence of minimizing geodesics is already non-trivial as shown by Atkin in \cite{Atkin1997}, where an example of a geodesically complete Riemannian manifold is given such that the exponential map is not surjective. Therefore, Elastica or Riemannian splines will preferably be studied on Riemannian manifolds where all the properties of the Hopf-Rinow theorem fully holds.

Only recently it has been proven in \cite{BruverisVialard} that the group of diffeomorphisms endowed with a right-invariant Sobolev metric of high enough order is complete in the sense of the Hopf-Rinow theorem. Namely, the group is a complete metric space which implies that it is geodesically complete, i.e. geodesics can be extended for all time. Moreover, between any two elements in the group in the connected component of the identity, there exists a length minimizing geodesic. Motivated by this positive result, we explore in this paper the minimization of the acceleration in the one dimensional case, which is the first step towards its generalizations in higher dimensions. 

\subsection{Contributions}
In section \ref{Sec:Reduction}, we summarize the formal derivation of \cite{HOSplines1,HOSplines2} and show important issues making this formulation of difficult use in addressing the problem of existence of Riemannian splines. More precisely, curves of zero acceleration on the group of diffeomorphisms endowed with a right-invariant metric are geodesics. These geodesic equations are easily written in Eulerian coordinates where, as in the incompressible Euler equation, there is a loss of smoothness. Therefore in Section \ref{Sec:GeodesicEquation}, we take advantage of the point of view introduced by Ebin and Marsden in \cite{Ebin1970} in which the authors showed the smoothness of the metric in Lagrangian coordinates. We then detail the Hamiltonian  formulation of the geodesic equations on $\on{Diff}_0^2([0,1])$ with a right-invariant Sobolev metric of order $2$ written in Lagrangian coordinates. Passing by, we give a simple proof of the regularity of geodesics in terms of the smoothness of the endpoints.
\par
In Section \ref{Sec:Relaxation}, we compute the relaxation of the acceleration functional in Theorem \ref{Th:Relaxation}, where the Fisher-Rao convex functional introduced in Definition \ref{def:FR}, appears to play a key role. As is usual in relaxation, weak convergence leads to defect measures and in our situation, these defect measures appear in the Fisher-Rao functional. Existence of minimizers is guaranteed on the product space of paths and defect measures. The final formulation of the relaxation appears in formula \eqref{eq:FirstVariationalReduction}.
\par
In Section \ref{Sec:MainTheorem}, we derive standard optimality conditions by means of convex analysis in Proposition \ref{Th:FirstOrderCondition} as well as an explicit sufficient condition for optimality in Proposition \ref{Th:FirstOrderConditionSufficient}. We also obtain a \emph{weak} strict convexity result in Proposition \ref{Th:WeakConvexity} for the minimization in the defect measure variable. As a consequence of this sufficient optimality condition, we prove that there exist solutions of the initial problem. We are also able to construct examples of paths of the initial Hamiltonian system that are critical points of the initial acceleration functional but not minimizers with respect to the relaxed acceleration functional. These examples were achieved using numerical computations and were motivated by an explicit construction (see Section \ref{Sec:Numerics} and the preceding proposition).

\subsection{Notations}

\begin{itemize}
\item For a manifold $M$, we denote by $TM$ its tangent bundle.
\item If $f: \R \to \R$ is a differentiable function, $f'$ denotes its derivative.
\item If $f: M \mapsto N$ is a $C^1$ map between manifolds, we denote by $df:$ it differential.
\item If $f(t,x)$ is a real valued function from time and space, $\partial_x f$ denotes its derivative with respect to $x$ and $\partial_t f$ denotes its derivative with respect to $t$. Often, we will denote $\dot{f}$ the derivative of $f$ with respect to the time variable; it will be used to stress the fact that we will often work in time dependent quantities with values in a Hilbert manifold of functions.
\item The square $D=[0,1]^2$ represents time and space variables $(t,x)$.
\item $(C^0(D),\| \cdot \|_\infty)$ is the space of continuous functions on $D$ endowed with the sup norm.
\item The space of positive and finite Radon measures on $D$ will be denoted by $\mathcal{M}(D)$.
\item The subspace of measures $\mathcal{M}_0(D) \eqdef \{ \mu \, ; \, \mu_{t=0} = 0  \}$ where $\mu_{t=0}$ denotes the disintegration of $\mu$ at time $0$.
\item The space of smooth test functions on $D$ is $\mathfrak{D}$.
\item The space of continuous real functions which are $C^1$ with respect to the first (time) variable is denoted $C_{1,0}(D)$.
\item The bracket between test functions and distribution will be denoted by $\langle \cdot, \cdot \rangle$.
\item We will use the weak topology on $\mathcal{M}$ and it is metrizable and separable since $D$ is separable.
\item The symbol $\star$ denotes the convolution.
\item The space of bounded linear operators between normed vector spaces $E,F$ is denoted by $\mathcal{L}(E,F)$.
\item The identity diffeomorphism will be denoted by $\on{Id}$.
\end{itemize}

\section{First and higher-order Euler-Poincar\'e reduction}\label{Sec:Reduction}

\subsection{Euler-Lagrange equation for reduced lagrangians}
A prototypical example of the situation we are interested in is the case of the incompressible Euler equation.
As shown by Arnold in \cite{Arnold1966}, the incompressible Euler equation is the Euler-Lagrange equation of geodesics on the group of volume preserving diffeomorphisms for the $L^2$ right-invariant metric. By analogy with the Lie group point of view, the incompressible Euler equation in terms of the vector field is written on the tangent space at identity, $T_{\on{Id}}G$ for a Lie group $G$. The Lagrangian introduced in \cite{Arnold1966} was already written on the space of vector fields, however, it is  a particular case of Lagrangians that can be written by a change of variable only at the tangent space of identity $\mathfrak{g}\eqdef T_{\on{Id}}G$, the Lie algebra in finite dimension\footnote{Note that in general in infinite dimensions, $G$ is generally not a Lie group, see \cite{Omori1978} for more details.}. This class of Lagrangians leads to the so-called Euler-Poincaré or Euler-Arnold equation when the Euler-Lagrange equation is written on $T_{\on{Id}}G$. A short proof of the derivation of this equation is given in \cite[Theorem 3.2]{MisolekPreston2010} in the case of a kinetic energy but let us underline that the same equation holds true for general Lagrangian that are right-invariant. We will need the definition of the adjoint and co-adjoint operators:
\begin{definition}
Let $G$ be a Lie group, $R_{h}$ (respectively $L_h$) denotes the right translation (respectively left translation) by $h \in G$, namely $R_h(g)\eqdef gh$ (respectively $L_h(g) \eqdef hg$). 
\\
Let $h\in G$, the adjoint operator $\Ad_h: G \times \mathfrak{g} \mapsto \mathfrak{g}$ is defined by
\begin{equation}
\Ad_h(v) \eqdef dL_h \cdot dR_{h^{-1}}(v)\,.
\end{equation}
Then, $\Ad_h^*$ is the adjoint of $\Ad_h$ defined by duality on $\mathfrak{g}$.
\\
Their corresponding differential map at $\on{Id}$ are respectively denoted by $\ad$ and $\ad^*$.
\end{definition}
Consider $\mathcal{L}:TG \mapsto \R$ a Lagrangian which satisfies the following invariance property,
\begin{equation}
\mathcal{L}(g,\dot{g}) = \mathcal{L}(\on{Id},dR_{g^{-1}}(\dot{g}))\,.
\end{equation}
The reduced Lagrangian is 
$\ell:\mathfrak{g}\mapsto \R$  defined by $\ell(v) = \mathcal{L}(\on{Id},v)$ for $v \in \mathfrak{g}$ and the variational problem can be rewritten as
\begin{equation}\label{Eq:FirstOrderSystem}
\inf \int_0^1 \ell(v) \ud t \quad \text{     subject to     } \quad
\begin{cases}
\dot{g} = dR_g(v) \\
g(0)=g_0 \in G \text{ and } g(1)=g_1 \in G\,.
\end{cases}
\end{equation}

In order to compute the Euler-Lagrange equation for \eqref{Eq:FirstOrderSystem}, one needs to compute the variation of $v$ in terms of the variation of $g$. It is given by $\dot{w} - \ad_{v}w$ for any path $w(t) \in T_{\on{Id}}G$. Therefore, the Euler-Lagrange equation reads
\begin{equation}\label{Eq:EPDiff}
\left(\partial_t + \ad^*_v\right) \frac{\delta \ell}{\delta v} = 0\,.
\end{equation}
When the Lagrangian is a kinetic energy, one has $\ell(v) = \frac{1}{2}\langle v, Lv\rangle$, which will be also denoted by $\frac 12 \| v \|_\mathfrak{g}^2$, where $L:\mathfrak{g} \mapsto \mathfrak{g}$ is a quadratic form and $ \langle \cdot , \cdot \rangle$ denotes the dual pairing, one has $\frac{\delta \ell}{\delta v}=Lv$ and $Lv$ is the so-called momentum. Then, the critical curves are determined by their initial condition $(g(0),\dot{g}(0))$
and the Euler-Poincaré equation \eqref{Eq:EPDiff}, together with the flow equation $\dot{g} = dR_g(v)$.
\par
Note that our situation differs with the incompressible Euler equation since we work with metrics that are strong as explained in the next paragraph.
\subsection{Geodesics for strong right-invariant metrics}\label{Sec:StrongInvMetrics}
In general, the Lie group structure is no longer present in infinite dimensions in situations of interest, as shown in \cite{Omori1978} and the notion of Lie algebra usually does not make sense. The group of diffeomorphisms is often only a topological group and a smooth Riemannian manifold and thus $\mathfrak{g}$ has to be understood as $T_{\on{Id}}G$. In the case of the group of diffeomorphisms, the key point is the loss of regularity of left composition\footnote{Left multiplication is not smooth on $H^s$ but right composition is smooth because it is linear.}, even if the underlying topology is that of $H^s$ for $s$ big enough. This loss of derivatives clearly appears in Equation \eqref{Eq:EPDiff}.
Therefore, to make sense of the above Euler-Poincaré equation, one often needs to work with two different topologies, so that right composition is a sufficiently regular map \cite{Ebin1970,Constantin2003}.
\par
An important contribution of \cite{Ebin1970} among others, is that the loss of smoothness can be circumvented by switching from Eulerian to Lagrangian coordinates. Namely, the incompressible Euler equation are interpreted as ordinary differential equations on a Hilbert space, which enables to conclude to local well poshness. Thus, the analytical study of these systems is sometimes better suited in Lagrangian coordinates.
Moreover, for the Euler equation, two topologies are required since the Lagrangian is the $L^2$ norm of the vector field and one wants to work on a group of diffeomorphisms whose underlying topology is stronger. 
This strategy works because the Euler-Poincaré equation \eqref{Eq:EPDiff} preserves the smoothness of the initial data as shown in \cite{Ebin1970}.
\par
Also remarked in \cite{Ebin1970,MisolekPreston2010}, in the case of the kinetic energy for an $H^s$ norm where $s>d/2+1$, a unique topology is enough to work with. Such a norm defines a strong Riemannian metric, in the sense of \cite{Ebin1970} and also \cite[Theorem 4.1]{MisolekPreston2010}, on the group of diffeomorphisms. Let us underline that the right-invariant metric $H^s$ for $s>d/2+1$ is indeed a smooth metric and one can apply standard results such as the Gauss lemma valid in infinite dimensions, as shown in \cite{Lang1999}. This smoothness result is also valid for fractional order Sobolev metrics \cite{Bauer20152010}. Moreover, using standard methods of calculus of variations, completeness results have been recently established in \cite{BruverisVialard}: the group is metrically complete which implies that geodesics can be extended for all time. Last, any two diffeomorphisms in the connected component of identity can be joined by a minimizing geodesic.
\subsection{The higher-order case}

In \cite{HOSplines1}, higher-order models that are also invariant are proposed on groups of diffeomorphisms but for the standard Riemannian cubics functional, no analytical study was provided. However, the formulation of the Euler-Lagrange equation in reduced coordinates is so simple that it is worth summarizing some of the results in \cite{HOSplines1}. Namely, let $\mathcal{L}:T^kG \mapsto \R$ be a Lagrangian defined on the $k^{\text{th}}$ order tangent bundle, then a curve
$g: [t_0, t_1] \rightarrow G$
is a critical curve of the action
\begin{equation}\label{Eq:Euler-Lagrange_action}
 \mathcal{J}[q] = \int_{t_0}^{t _1} \mathcal{L} \left( g(t), \dot g(t),....,  g^{ (k) }(t) \right)  dt  
\end{equation}
among all curves  $g(t) \in G$ whose first $(k-1)$ derivatives are fixed at the endpoints: $g^{(j)}(t_i)$, $i=0,1$, $j=0,..., k-1$,
if and only if $g(t)$ is a solution of the $k^{th}$-order Euler-Lagrange equations
  \begin{equation}\label{EL-eqns}
    \sum_{j=0}^k (-1)^j \frac{d^j}{dt^j}  \frac{\partial  \mathcal{L}}{\partial  g^{ (j) }}=0\,.
  \end{equation}
Now, an invariant higher-order Lagrangian is completely defined by its restriction on the higher-order tangent space at identity. As a consequence, the Lagrangian \eqref{Eq:Euler-Lagrange_action} can be rewritten as 
\begin{equation*}\label{Reduced_curve}
\mathcal{L}\left([g]_{g(t_0)}^{(k)}\right) = \ell\left(v(t_0), \dot{v}(t_0), \ldots , v^{(k-1)}(t_0)\right),
\end{equation*}
where $v \eqdef dR_{g^{-1}}(\dot{g})$ as detailed in \cite{HOSplines1}. The corresponding higher-order Euler-Poincaré equation is 
\begin{equation}\label{EP_k_dash}
\left(\partial_t  + \operatorname{ad}^*_v \right)  
\sum _{ j=0}^ {k-1} (-1)^j \partial _t ^j  \frac{\delta \ell }{\delta  v^{ (j) } }=0\,.
\end{equation}

Let us instantiate it in the case of the Lagrangian \eqref{SplinesFunctional} for which the previous setting applies.
Indeed, in the case of a Lie group $G$  with a right-invariant metric, the covariant derivative can be written as follows:
Let $V(t) \in T_{g(t)}G$ be a vector field along a curve $g(t) \in G$
 \begin{equation}\label{Covariant_derivative_vf_prop}
      \frac{D}{Dt} V =\Big( \dot{\nu} + \frac{1}{2} \operatorname{ad}^\dagger_\xi \nu + \frac{1}{2} \operatorname{ad}^\dagger_\nu \xi - \frac{1}{2} [\xi, \nu]\Big) _G(g)\,,
    \end{equation}
    where $\operatorname{ad}^\dagger$ is the metric adjoint defined by
     \begin{equation*}\label{ad-dagger-def}
 \operatorname{ad}^\dagger_{\nu}{\kappa} := 
 (\operatorname{ad}^*_\nu (\kappa^\flat))^\sharp
 \end{equation*}
 for any $\nu, \kappa\in \mathfrak{g}$ where $\flat$ is the isomorphism associated with the metric from$\mathfrak{g}$ to $\mathfrak{g}^*$ and $\sharp$ is its inverse. They correspond to raising and lowering indices in tensor notation.
 \par
Therefore, the reduced lagrangian for \eqref{SplinesFunctional} is 
\begin{equation}\label{ReducedSplinesFunctional}
\mathcal{J}(x) = \int_0^1 \| \dot{\xi} + \ad^\dagger_\xi \xi\|^2_\mathfrak{g} \ud t\,.
\end{equation}
 For this Lagrangian, the Euler-Lagrange equation \eqref{EP_k_dash} reads
 \begin{equation}
\label{2nd-EPeqns2}
\left(\partial _t + \operatorname{ad}^\dagger_ \xi \right) \left(\partial _t \eta
 +  \operatorname{ad}^\dagger_{ \eta} \xi  + \operatorname{ad}_{\eta} \xi \right) =0
 \quad \hbox{where}\quad
 \eta := \dot{ \xi } + \operatorname{ad}^\dagger _ \xi \xi.
\end{equation}
While this formula is compact, it is a formal calculation in the case of the group of diffeomorphisms since there is a loss of smoothness which is already present in the acceleration formula \eqref{2nd-EPeqns2}. This formula also resembles closely to the Jacobi field equations for such metrics which was used in \cite{MisolekPreston2010} and for which an integral formulation is available \cite[Proposition 5.5]{MisolekPreston2010}. This is of course expected due to Formula \eqref{Eq:EulerLagrangeClassical} below.

As mentioned in Section \ref{Sec:StrongInvMetrics}, there is a clear obstacle to use reduction since the operator $ \ad^\dagger$ is unbounded on the tangent space at identity due to the loss of derivative. 
Following \cite{Ebin1970}, one can use the smooth Riemannian structure on $\mc D^s$ to check that the functional \eqref{SplinesFunctional} is well defined.
The following proposition of \cite{splinesanalyse} is valid in infinite dimensions:
\begin{proposition}
Let $(M,g)$ be an infinite dimensional Riemannian manifold and $$\Omega_{0,1}(M) := \{  x \in H^2([0,1],M) \, | \, x(i) = x_i \,,\, \dot{x}(i) = v_i \text{ for } i=0,1 \}$$ be the space of paths with first order boundary constraints for given $(x_0,v_0) \in TM$ and $(x_1,v_1) \in TM$. 
\noindent
The functional \eqref{SplinesFunctional} is smooth on $\Omega_{0,1}(M)$ and 
\begin{equation*}
\mathcal{J}'(x)(v) = \int_0^1 g\left(\frac{D^2}{Dt^2}\dot{v},\frac{D}{Dt}\dot{x}\right) - g\left(R\left(\dot{x},\frac{D}{Dt}\dot{x}\right),v\right) \, \ud t\,.
\end{equation*}
A critical point of $\mathcal{J}$ is a smooth curve that satisfies the Riemannian cubic equation
\begin{equation}\label{Eq:EulerLagrangeClassical}
\frac{D^3}{Dt^3}\dot{x} - R\left(\dot{x},\frac{D}{Dt}\dot{x}\right)\dot{x} = 0\,.
\end{equation}
\end{proposition}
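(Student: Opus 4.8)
The plan is to treat $\Omega_{0,1}(M)$ as a Hilbert manifold modelled on $H^2$ vector fields along a base curve, whose tangent space at $x$ consists of those $H^2$ fields $v$ along $x$ with $v(i)=0$ and $\frac{D}{Dt}v(i)=0$ for $i=0,1$; these are exactly the linearisations of the first-order boundary constraints, the second one because the variation of $\dot x$ equals $\frac{D}{Dt}v$ by the symmetry lemma. Smoothness of $\mathcal{J}$ would then follow by writing it as a composition of smooth maps: the map $x\mapsto \frac{D}{Dt}\dot x\in L^2$, the pointwise metric $g$, and integration. In one dimension $H^2([0,1])$ embeds into $C^1$ and is a Banach algebra, so the Christoffel term in $\frac{D}{Dt}\dot x$ depends smoothly on $x$; here one crucially uses that for a strong Riemannian metric the metric tensor and the Levi-Civita connection are smooth, exactly as for the Gauss lemma invoked above.

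To compute $\mathcal{J}'(x)(v)$ I would pick a smooth variation $x_s$ with $\partial_s x_s|_{s=0}=v$ and differentiate under the integral. Metric compatibility gives $\frac{d}{ds}\mathcal{J}(x_s)=2\int_0^1 g\!\left(\frac{D}{Ds}\frac{D}{Dt}\dot x,\frac{D}{Dt}\dot x\right)\ud t$. The two geometric identities to invoke are the torsion-free symmetry lemma $\frac{D}{Ds}\partial_t x=\frac{D}{Dt}\partial_s x$ and the curvature commutation $\frac{D}{Ds}\frac{D}{Dt}-\frac{D}{Dt}\frac{D}{Ds}=R(\partial_s x,\partial_t x)$; combining them at $s=0$ yields $\frac{D}{Ds}\frac{D}{Dt}\dot x=\frac{D^2}{Dt^2}v+R(v,\dot x)\dot x$. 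Finally the pair symmetries of the curvature tensor rewrite $g\!\left(R(v,\dot x)\dot x,\frac{D}{Dt}\dot x\right)$ as $-g\!\left(R(\dot x,\frac{D}{Dt}\dot x)\dot x,v\right)$, which reproduces the stated first variation (up to the overall factor $2$ coming from differentiating the quadratic integrand).

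Setting $\mathcal{J}'(x)(v)=0$ for all admissible $v$ gives the weak Euler-Lagrange identity $\int_0^1 g(\frac{D^2}{Dt^2}v,\frac{D}{Dt}\dot x)\ud t=\int_0^1 g(R(\dot x,\frac{D}{Dt}\dot x)\dot x,v)\ud t$. Once enough regularity is available (next step), integrating the left-hand side by parts twice — all boundary terms vanishing because $v(i)=\frac{D}{Dt}v(i)=0$ — converts this, via the fundamental lemma of the calculus of variations and the density of admissible $v$ in the $L^2$ sections, into the strong Riemannian cubic equation \eqref{Eq:EulerLagrangeClassical}.

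The part I expect to be the main obstacle is upgrading a critical point from its a priori $H^2$ regularity to the claimed smoothness, all the more so because the statement is meant to hold in infinite dimensions. I would argue by bootstrapping on the weak identity: since $\dot x\in C^0$ and $\frac{D}{Dt}\dot x\in L^2$, the curvature term $R(\dot x,\frac{D}{Dt}\dot x)\dot x$ lies in $L^2$, so the weak equation says that $\frac{D}{Dt}\dot x$ solves a second-order linear ODE with $L^2$ right-hand side; hence $\frac{D}{Dt}\dot x\in H^2$ and $x\in H^4$. Re-injecting this improves the regularity of the curvature term, and iterating drives $x$ to $C^\infty$. Making each step rigorous relies on the smoothness of $g$ and $R$ as maps between the relevant Hilbert bundles, which the strong-metric hypothesis provides; carefully tracking the manifold-valued regularity through this bootstrap is the delicate point.
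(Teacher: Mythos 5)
The paper gives no proof of this proposition: it is imported from \cite{splinesanalyse} with the remark that the argument survives in infinite dimensions, so there is no in-paper argument to compare yours against. Your proof is the standard one and is sound --- the symmetry lemma plus the curvature commutation identity to get $\frac{D}{Ds}\frac{D}{Dt}\dot x=\frac{D^2}{Dt^2}v+R(v,\dot x)\dot x$, the pair symmetries of $R$ to move the curvature term onto $v$, two integrations by parts with vanishing boundary terms thanks to $v(i)=\frac{D}{Dt}v(i)=0$, and a one-dimensional elliptic bootstrap (best carried out in a parallel frame along $x$, which the strong-metric hypothesis makes available) for the smoothness of critical points. As a by-product your computation corrects two typographical slips in the statement as printed: the first variation should involve $g\bigl(\frac{D^2}{Dt^2}v,\frac{D}{Dt}\dot{x}\bigr)$ rather than $\dot v$, the curvature term should read $g\bigl(R\bigl(\dot{x},\frac{D}{Dt}\dot{x}\bigr)\dot{x},v\bigr)$, and there is an overall factor of $2$, which you note.
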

The critical points of $\mathcal{J}$ are called Riemannian cubics or cubic polynomials. In Euclidean space, the curvature tensor vanishes and one recovers standard cubic polynomials.
In this paper, we will be interested in existence of minimizers for the functional \eqref{SplinesFunctional} in the case of the group of diffeomorphisms endowed with a strong right-invariant metric. 
The existence of minimizers (and the fact that $\mathcal{J}$ satisfies the Palais-Smale condition) does not follow from the corresponding proof in \cite{splinesanalyse} since it strongly relies on the finite dimension hypothesis and compactness of balls. Moreover, as shown above, it is not possible to follow the proof of \cite{BruverisVialard} since the reduced functional \eqref{ReducedSplinesFunctional} is not well defined on the tangent space at identity. Therefore, we will write in Section \ref{Sec:GeodesicEquation} the variational problem in Lagrangian coordinates so that we take advantage of the smoothness of the metric.

\section{The main result and the strategy of proof}

The smoothness of the metric is not enough to deal with the problem of existence of minimizing geodesics and the well-known example is the work of Brenier on generalized solutions of Euler equation \cite{BrenierLeastAction}. As explained above, a technical important difference is that the $L^2$ metric on the space of diffeomorphisms is a weak metric in the sense of Ebin and Marsden \cite{Ebin1970}, whereas we work with a strong metric. The group of diffeomorphisms endowed with a right-invariant Sobolev metric of order $s > d/2+1$ is complete in all sense of the Hopf-Rinow theorem as proven in \cite{BruverisVialard}. Passing to second-order derivatives has been less treated from a variational point of view, and to the best of our knowledge it has never been addressed in the case of right-invariant norms on the group of diffeomorphisms. \par
We present hereafter the three main steps developed in the paper. 
In Section \ref{Sec:GeodesicEquation}, the first step is to choose a convenient formulation of the acceleration which is done .
The first technical choice follows Ebin and Marsden \cite{Ebin1970} and it consists in writing the acceleration in Lagrangian coordinates instead of Eulerian coordinates. The point is to avoid the loss of smoothness of the Eulerian formulation. The second choice which appears the most important from an analytical point of view consists in using the second derivative of the diffeomorphism as the main variable to compute the geodesic equation. We therefore work on $H^2_0([0,1])$ in order to avoid boundary terms. At this step, we strongly use the one dimensional setting. This simple change of variable leads to geodesic equation that have a Hamiltonian formulation enjoying important analytical properties. Let us give an overview of the new set of equations. Now the variable $q$ represents a function in $L^2([0,1])$ and thus the dual space can be identified with $L^2([0,1])$, we have the following formulation, with $\mathcal{H}$ being the Hamiltonian
\begin{equation}
\begin{cases}
\dot{q} = \partial_p \mathcal{H}(p,q) = K(q)(p)\\
\dot{p} = -\partial_q \mathcal{H}(p,q)= -B(q)(p,p)\,,
\end{cases}
\end{equation}
where $K(q)$ is a bounded linear operator on $L^2([0,1])$ which is continuous w.r.t. $q$ for the weak topology. As is well-known, $K(q)$ is the inverse of the metric tensor. The  operator $B(q)$ is bounded as a bilinear operator on $L^2$ and is continuous w.r.t. $q$ for the weak topology. However, it is not continuous w.r.t. to $p$ for the weak topology due to the bilinear structure. 
\par Importantly, the operator $B$ is non-local and therefore, the acceleration functional \eqref{Eq:ShortAccelerationFunctional} written below is not the integral of a Caratheodory type integrand. This non-local term is more precisely $\mathcal{U}(p^2,q)$ where $\mathcal{U}$ is defined in Formula \eqref{Eq:NonLocalOperator}. In such cases, there does not exist a general theory of relaxation and the relaxation formulation has to be studied directly.
\par
Thus, the second step consists in studying the relaxation of the acceleration functional that can be written as follows
\begin{equation}\label{Eq:ShortAccelerationFunctional}
\mathcal{J}(p,q) = \int_0^1\|K(q)^{1/2}(\dot{p} + B(q)(p,p)) \|^2_{L^2} \ud t+ P(p(1),q(1))
\end{equation}
where $P$ is a relaxation of the endpoint constraint at time $1$ which is lower continous for the weak topology. 
Expanding the quadratic term, we have to deal with the weak limit of $\langle \dot{p},K(q)\dot{p}\rangle$ denoted by $\nu$ and the weak limit of $B(q)(p,p)$ which only involves the weak limit of $p^2$ denoted by $\mu$. These two weak limits are related to each other. The relation is given by the following inequality
\begin{equation}\label{Eq:SquareRoot}
\left(\partial_t \sqrt{\mu}\right)^2 \leq \nu \,,
\end{equation}
for which a careful analysis is developed in section \ref{Sec:FRFunctional}. In fact, Equation \eqref{Eq:SquareRoot} can be made rigorous using the Fisher-Rao functional which is a convex functional on measures on the time space domain $D = [0,1]^2$ defined by 
\begin{equation*}
\on{FR}_f(\mu,\nu) = \int_D \frac14 \frac{\rho_\nu^2}{\rho_\mu} f \ud \lambda\,
\end{equation*}
where $f$ is a positive and continuous weight function on $D$ and $\rho_\nu$ and $\rho_\mu$ are the densities of $\mu$ and $\nu$ with respect to a dominating measure $\lambda$. 
Now, formula \eqref{Eq:SquareRoot} can be rewritten as
\begin{equation}
 \on{FR}(\mu,\partial_t\mu) \leq \nu \,.
 \end{equation}
as linear operators on continuous positive functions $f$ on the domain $D$. Therefore, the relaxation of $\mathcal{J}$ will make appear the Fisher-Rao term $ \on{FR}(\mu,\partial_t\mu)$. Let us underline that informally, this is the cost associated with the oscillations that are generated on $p$. However, in order to prove that the relaxation of the functional exactly involves this quantity we have to construct explicitly the oscillations that generate the measure $\mu$ at the given cost $ \on{FR}(\mu,\partial_t\mu)$. This is a technical step that relies on the construction of solutions to the first equation of the Hamiltonian system and also on an explicit construction of the oscillations.
\par
The last step in Section \ref{Sec:MainTheorem} consists in a standard analysis of optimality conditions by means of convex analysis. In particular, the Euler-Lagrange equation associated with the Fisher-Rao functional is a Riccati equation.

\section{Geodesic equations in Hamiltonian coordinates}\label{Sec:GeodesicEquation}

\subsection{The group of diffeomorphisms and its right-invariant metric}
We consider the space of $H_0^2([0,1])$ of Sobolev functions of order 2 on the real interval $[0,1]$ with vanishing Dirichlet boundary condition on the function and its first derivative. We also define the group $\on{Diff}_0^2([0,1])$ of Sobolev diffeomorphisms, as follows:

\begin{definition}\label{def:GroupOfDiffeos}
The group of Sobolev diffeomorphisms $\on{Diff}_0^2([0,1])$ on the real interval $[0,1]$ is
\begin{equation}\label{Eq:GroupOfDiffeos}
\on{Diff}_0^2([0,1]) = \{ \on{Id} + f \, | \, f \in   H_0^2([0,1]) \text{ and } 1+f'(x) > 0 \} \,.
\end{equation}
\end{definition}
Note that this group is actually the connected component of identity which justifies the subscript.
 On this group, we define the right-invariant metric by defining it on the tangent space at the identity $\on{Id}$. Note that elements of the tangent space at identity will sometimes be called vector fields by analogy with fluid mechanic.
 \begin{definition}\label{def:MetricOnTheGroup}
 Let $u,v$ be two tangent vectors at $\on{Id} \in \on{Diff}_0^2([0,1])$, define
\begin{equation}\label{eq:MetricAtIdentity}
G_{\on{Id}}(u,v) = \langle u, v \rangle_{H_0^2} \eqdef \int_{0}^1 \partial_{xx}u\,\partial_{xx}v\ud x\,.
\end{equation}
The scalar product on $T_{\on{Id}}\on{Diff}_0^2([0,1])$ completely defines the right-invariant metric on $\on{Diff}_0^2([0,1])$: let $X_\varphi,Y_\varphi$ be two tangent vectors at $\varphi \in \on{Diff}_0^2([0,1])$, then
\begin{equation}\label{eq:MetricH2}
G_{\varphi}(X_\varphi,Y_\varphi) = \langle X_\varphi \circ \varphi^{-1}, Y_\varphi \circ \varphi^{-1} \rangle_{H_0^2} \,.
\end{equation}
\end{definition}
Remark that due to boundary conditions in $ H_0^2([0,1])$, the metric is non degenerate.

\subsection{Existence of minimizing geodesics and their formulation in Eulerian coordinates}

We first begin with an important lemma that will be used in some other sections.

\begin{lemma}
Let $v \in L^2([0,1],H_0^2([0,1])$ be a time dependent vector field, then there exists a unique solution to the flow equation:
\begin{equation}\label{eq:ODE}
\begin{cases}
\partial_t \varphi(t,x) = v(t,\varphi(t,x))\\
\varphi(0)= \on{Id}\,.
\end{cases}
\end{equation}
and $\varphi \in C^0([0,1],H^2([0,1]))$.
\end{lemma}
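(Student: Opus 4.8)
The central difficulty is that the right-hand side of the flow equation, viewed as the map $\varphi \mapsto v(t)\circ \varphi$, is \emph{left} composition by $v(t)$, which is continuous but not Lipschitz (let alone smooth) on $H^2([0,1])$: differentiating twice produces the term $(\partial_{xx}v)\circ\varphi$, and $\partial_{xx}v$ is only $L^2$, so one loses a derivative. Consequently the Picard--Lindel\"of theorem cannot be applied directly on the Banach space $H^2([0,1])$. The plan is instead to solve the flow fiberwise, as a family of scalar Carath\'eodory ODEs indexed by the initial point $x$, and then to recover the spatial $H^2$-regularity a posteriori from the variational (linearized) equations. This is where I expect the main work to lie.

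As a preliminary, I would exploit the one-dimensional Sobolev embedding $H_0^2([0,1]) \hookrightarrow C^1([0,1])$: for a.e.\ $t$, $v(t,\cdot)$ is $C^1$, vanishes together with its first derivative at $0$ and $1$, and is globally Lipschitz in $x$ with constant $L(t) := C\|v(t)\|_{H^2}$. Since the time interval is bounded, Cauchy--Schwarz yields $\int_0^1 L(s)\,\ud s \le C\|v\|_{L^2([0,1],H^2)} < \infty$, so $L \in L^1([0,1])$; this integrable-in-time Lipschitz bound is precisely the hypothesis of the Carath\'eodory existence theorem. For each fixed $x \in [0,1]$ the scalar problem $\dot y = v(t,y)$, $y(0)=x$, then admits a unique absolutely continuous solution on $[0,1]$ (existence by Carath\'eodory, uniqueness by a Gronwall argument using $L \in L^1$); the Dirichlet conditions $v(t,0)=v(t,1)=0$ force the endpoints to be fixed and keep the trajectory inside $[0,1]$. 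Setting $\varphi(t,x) := y(t;x)$ defines the candidate flow, and uniqueness of the $H^2$-valued solution follows from this fiberwise uniqueness.

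For the regularity, I would differentiate in $x$ using differentiable dependence on initial conditions for Carath\'eodory ODEs. The Jacobian $J := \partial_x \varphi$ solves the linear equation $\partial_t J = (\partial_x v)(t,\varphi)\,J$, whence $J(t,x) = \exp\!\big(\int_0^t (\partial_x v)(s,\varphi(s,x))\,\ud s\big)$ is bounded above and below uniformly on $[0,1]^2$ (again because $\int_0^1\|\partial_x v(s)\|_\infty\,\ud s<\infty$), so $\varphi(t,\cdot)$ is an increasing $C^1$ diffeomorphism of $[0,1]$. Differentiating once more gives the explicit formula
\begin{equation*}
\partial_{xx}\varphi(t,x) = J(t,x)\int_0^t (\partial_{xx}v)(s,\varphi(s,x))\,J(s,x)\,\ud s \,.
\end{equation*}
Here the key estimate is the change-of-variables bound $\|(\partial_{xx}v)(s,\cdot)\circ \varphi(s,\cdot)\|_{L^2_x} \le (\inf_x J(s,x))^{-1/2}\,\|\partial_{xx}v(s)\|_{L^2}$; combined with the uniform bounds on $J$ and Minkowski's integral inequality, it yields $\sup_t \|\partial_{xx}\varphi(t)\|_{L^2} \le C\int_0^1 \|v(s)\|_{H^2}\,\ud s \le C\|v\|_{L^2([0,1],H^2)} < \infty$. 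Hence $\varphi(t,\cdot) \in H^2([0,1])$ for every $t$.

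Finally, continuity in time into $H^2([0,1])$ would follow from the integral representations: $t\mapsto \varphi(t)$ and $t\mapsto J(t)$ are continuous uniformly in $x$, and the remaining factor $\int_0^t (\partial_{xx}v)(s,\varphi(s,\cdot))\,J(s,\cdot)\,\ud s$ is continuous as an $L^2_x$-valued map because its integrand has $L^2_x$-norm that is $L^1$ in $s$ (absolute continuity of the integral makes the tail $\int_{t'}^t$ vanish in $L^2_x$). Assembling the three orders of derivatives gives $\varphi \in C^0([0,1],H^2([0,1]))$. The only genuinely delicate points are the justification of the variational equations in the merely-$L^2$-in-time Carath\'eodory setting and the systematic use of $\int_0^1\|v(s)\|_{H^2}\,\ud s \le \|v\|_{L^2([0,1],H^2)}$ to absorb the time dependence; the loss-of-derivative obstacle itself is defused once and for all by passing to the fiberwise scalar ODEs, where only the spatial Lipschitz bound --- available from the embedding $H^2 \hookrightarrow C^1$ --- is needed.
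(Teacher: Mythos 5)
Your proposal is essentially correct, and it reconstructs the argument that the paper itself does not spell out: the paper's ``proof'' is only a citation to \cite[Section 4]{BruverisVialard}, and the strategy there is exactly yours --- use the embedding $H_0^2\hookrightarrow C^1$ to get a time-integrable spatial Lipschitz bound, solve the flow as a family of scalar Carath\'eodory ODEs, and recover the $H^2$-regularity in $x$ from the variational equations together with the uniform two-sided bounds on the Jacobian and the change-of-variables estimate $\|(\partial_{xx}v)(s,\cdot)\circ\varphi(s,\cdot)\|_{L^2}\le(\inf_x J(s,x))^{-1/2}\|\partial_{xx}v(s)\|_{L^2}$. The one step where your write-up is thinner than it should be is the derivation of the formula for $\partial_{xx}\varphi$: since $\partial_{xx}v(s,\cdot)$ is merely $L^2$ in $x$, the flow is not $C^2$ in $x$ and the classical theorem on differentiable dependence does not apply a second time; the displayed identity must be read as a formula for the \emph{weak} second derivative of $\varphi(t,\cdot)$ and justified by approximating $v$ by smooth vector fields in $L^2([0,1],H^2_0)$, checking that the corresponding flows and Jacobians converge in $C^0$ (which your Gronwall bounds give), and passing to the limit in the integral representation. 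You flag the ``delicate point'' as the $L^2$-in-time regularity, but the real issue is the $L^2$-in-space regularity of $\partial_{xx}v$; once the approximation argument is inserted, your estimates close the proof, including the $C^0([0,1],H^2)$ continuity via absolute continuity of the time integral.
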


\begin{proof}
See \cite[Section 4]{BruverisVialard}.
\end{proof}

\begin{lemma}
Let $v_n \in L^2([0,1],H_0^2([0,1])$ a weakly converging sequence, then $\varphi_n$ converges uniformly on $[0,1]  \times [0,1]$.
\end{lemma}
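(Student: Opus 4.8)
The plan is to establish precompactness of $\{\varphi_n\}$ in $(C^0(D),\|\cdot\|_\infty)$ via the Arzel\`a--Ascoli theorem, to identify every subsequential limit as the flow of the weak limit $v$, and then to conclude that the whole sequence converges by uniqueness of that limit. Throughout I would use two elementary facts. First, a weakly convergent sequence is norm-bounded, so $M \eqdef \sup_n \|v_n\|_{L^2([0,1],H_0^2)} < \infty$. Second, the one-dimensional Sobolev embedding $H_0^2([0,1]) \hookrightarrow C^1([0,1])$, which yields $\|v_n(\tau,\cdot)\|_{C^1} \le C\|v_n(\tau,\cdot)\|_{H_0^2}$ for a fixed constant $C$.

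For the compactness step I would work from the integral form $\varphi_n(t,x) = x + \int_0^t v_n(\tau,\varphi_n(\tau,x))\,\ud\tau$. Equicontinuity in $t$ is immediate from Cauchy--Schwarz in time:
\[
|\varphi_n(t,x)-\varphi_n(s,x)| \le \int_s^t \|v_n(\tau,\cdot)\|_\infty\,\ud\tau \le C M |t-s|^{1/2}\,.
\]
Equicontinuity in $x$ I would obtain from the Lipschitz bound $\|\partial_x v_n(\tau,\cdot)\|_\infty \le C\|v_n(\tau,\cdot)\|_{H_0^2}$ together with Gr\"onwall's inequality applied to $|\varphi_n(t,x)-\varphi_n(t,y)|$, giving $|\varphi_n(t,x)-\varphi_n(t,y)| \le |x-y|\exp(CM)$ uniformly in $n$. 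Since each $\varphi_n$ maps $[0,1]$ into $[0,1]$, the family is uniformly bounded, so Arzel\`a--Ascoli furnishes, from any subsequence, a further subsequence converging uniformly on $D$ to some $\psi \in C^0(D)$.

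The hard part is to show that any such limit $\psi$ equals the flow $\varphi$ of $v$, since the integrand $v_n(\tau,\varphi_n(\tau,x))$ couples the weak convergence of $v_n$ with the nonlinear composition. I would split
\[
v_n(\tau,\varphi_n(\tau,x)) = v_n(\tau,\psi(\tau,x)) + \bigl[v_n(\tau,\varphi_n(\tau,x))-v_n(\tau,\psi(\tau,x))\bigr]\,.
\]
The bracketed term is dominated by $\|\partial_x v_n(\tau,\cdot)\|_\infty\,|\varphi_n(\tau,x)-\psi(\tau,x)|$, whose time integral is at most $CM\bigl(\int_0^1\|\varphi_n-\psi\|_\infty^2\,\ud\tau\bigr)^{1/2}$ and hence tends to $0$ by the uniform convergence $\varphi_n\to\psi$. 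For the leading term the key observation is that, for fixed $(t,x)$, the map $w \mapsto \int_0^t w(\tau,\psi(\tau,x))\,\ud\tau$ is a bounded linear functional on $L^2([0,1],H_0^2)$: point evaluation $w(\tau,\cdot)\mapsto w(\tau,\psi(\tau,x))$ is continuous on $H_0^2$ via the embedding into $C^0$, and Cauchy--Schwarz in time bounds the whole functional by $CM$. Since $v_n\rightharpoonup v$ weakly, this forces $\int_0^t v_n(\tau,\psi(\tau,x))\,\ud\tau \to \int_0^t v(\tau,\psi(\tau,x))\,\ud\tau$ for every $(t,x)$; this is the single place where weak rather than strong convergence of $v_n$ is genuinely exploited.

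Passing to the limit in the integral equation then gives $\psi(t,x) = x + \int_0^t v(\tau,\psi(\tau,x))\,\ud\tau$, so $\psi$ solves the flow equation \eqref{eq:ODE} for the vector field $v$; by the uniqueness part of the preceding lemma, $\psi = \varphi$. As the precompact sequence $\{\varphi_n\}$ has every subsequential limit equal to $\varphi$, the full sequence converges uniformly to $\varphi$ on $D$, proving the claim. I expect the identification of the limit to be the only delicate point, and the splitting above is designed precisely to isolate it.
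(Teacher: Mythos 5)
Your proof is correct. The paper itself offers no argument for this lemma---it simply defers to \cite[Lemma 7.1]{BruverisVialard}---and your write-up supplies a complete, self-contained proof along the same standard lines as that reference: norm-boundedness of the weakly convergent sequence, the embedding $H_0^2([0,1])\hookrightarrow C^1([0,1])$ to get equicontinuity in both variables, Arzel\`a--Ascoli, and identification of every subsequential limit with the flow of $v$ by testing the weak convergence against the bounded linear functional $w\mapsto\int_0^t w(\tau,\psi(\tau,x))\,\ud\tau$. The splitting you use to isolate where weak convergence enters is exactly the right delicate point, and the Gr\"onwall and uniqueness steps are handled correctly.
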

\begin{proof}
See \cite[Lemma 7.1]{BruverisVialard}.
\end{proof}
\begin{theorem}
Let $\varphi_0,\varphi_1 \in \on{Diff}_0^2([0,1])$, there exists a minimizing geodesic between $\varphi_0$ and $\varphi_1$ in $H^1([0,1],\on{Diff}_0^2([0,1]))$.
\end{theorem}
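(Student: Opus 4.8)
The plan is to apply the direct method of the calculus of variations to the energy functional, exploiting that the metric is right-invariant and that, by the two lemmas above, a path is completely and stably encoded by its Eulerian velocity. First I would pass from length to energy: a minimizing geodesic is produced by minimizing
\[
E(\varphi) = \int_0^1 G_{\varphi(t)}(\partial_t\varphi,\partial_t\varphi)\ud t
\]
over paths $\varphi \in H^1([0,1],\on{Diff}_0^2([0,1]))$ with $\varphi(0)=\varphi_0$ and $\varphi(1)=\varphi_1$, since a minimizer of $E$ is a constant-speed curve and, by Cauchy--Schwarz, is simultaneously length-minimizing. Using right-invariance of the metric, composing on the right by $\varphi_0^{-1}$ reduces the problem to the case $\varphi_0=\on{Id}$ with new target $\psi_1 \eqdef \varphi_1\circ\varphi_0^{-1}$; this leaves the energy unchanged because the Eulerian velocity $v(t)=\partial_t\varphi\circ\varphi^{-1}$ is invariant under right-composition.

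Next I would reparametrize the problem through $v$. By the flow lemma, $v \mapsto \varphi$ (the solution of $\partial_t\varphi=v(t,\varphi)$ with $\varphi(0)=\on{Id}$) is a bijection between $L^2([0,1],H_0^2([0,1]))$ and paths issuing from $\on{Id}$, and by right-invariance $E(\varphi)=\int_0^1\|v(t)\|_{H_0^2}^2\ud t=\|v\|_{L^2([0,1],H_0^2)}^2$. The admissible set $\{v : \varphi(1)=\psi_1\}$ is nonempty because $\on{Diff}_0^2([0,1])$ is the connected component of the identity, so there is at least one smooth path joining $\on{Id}$ to $\psi_1$, hence one admissible $v$ of finite energy. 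I would then take a minimizing sequence $v_n$; its energies being bounded, the sequence is bounded in the Hilbert space $L^2([0,1],H_0^2([0,1]))$, so after extraction $v_n \rightharpoonup v$ weakly.

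To pass to the limit I would invoke the second lemma: weak convergence $v_n\rightharpoonup v$ forces the flows $\varphi_n$ to converge uniformly on $[0,1]\times[0,1]$ to the flow $\varphi$ of $v$. Since $\varphi_n(1)=\psi_1$ for every $n$ and convergence is uniform, the constraint passes to the limit, $\varphi(1)=\psi_1$, so $v$ is admissible; the flow of an $L^2 H_0^2$ velocity is a curve in the group, so $\varphi(t)$ remains a genuine diffeomorphism for every $t$. Weak lower semicontinuity of the Hilbert norm gives $\|v\|_{L^2([0,1],H_0^2)}^2\le\liminf_n\|v_n\|_{L^2([0,1],H_0^2)}^2=\inf E$, so $v$ (equivalently its flow $\psi$, whence $\psi\circ\varphi_0$) is a minimizer. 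Finally, regularity in $H^1([0,1],\on{Diff}_0^2([0,1]))$ follows because $\partial_t\varphi=v\circ\varphi$ and the composition estimate $\|v(t)\circ\varphi(t)\|_{H^2}\le C(\|\varphi(t)\|_{H^2})\|v(t)\|_{H^2}$, together with $\varphi\in C^0([0,1],H^2)$ and $v\in L^2([0,1],H_0^2)$, make $\partial_t\varphi$ square integrable in time with values in $H^2$; the $H_0^2$ boundary conditions are preserved under this composition because $\varphi$ fixes the endpoints $0$ and $1$.

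The step I expect to be the main obstacle is the compatibility between the weak topology on velocities, which alone provides compactness of the minimizing sequence, and the stronger uniform control needed to keep the endpoint constraint closed and the limit inside the group. This is precisely the content of the flow-convergence lemma, which upgrades weak convergence of $v_n$ to uniform convergence of $\varphi_n$; without it a weak limit could a priori fail to satisfy $\varphi(1)=\psi_1$ or could degenerate, with some $1+f'$ vanishing. All the remaining ingredients --- weak compactness and lower semicontinuity of a Hilbert norm, and the reduction by right-invariance --- are then routine.
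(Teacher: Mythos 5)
Your argument is correct and follows essentially the same route as the paper, which simply delegates to \cite[Theorem 7.2]{BruverisVialard}: the two lemmas the paper states just before the theorem (existence and continuity of the flow for $v\in L^2([0,1],H^2_0)$, and uniform convergence of flows under weak convergence of velocities) are precisely the ingredients of that proof, and you deploy them in the same way — direct method on the energy in the velocity variable, weak compactness and lower semicontinuity of the Hilbert norm, and the flow-convergence lemma to close the endpoint constraint. Nothing to add.
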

\begin{proof}
See \cite[Theorem 7.2]{BruverisVialard}.
\end{proof}

The Euler-Poincaré equation has a strong sense when the initial condition is sufficiently smooth due to the propagation of regularity \cite[Theorem 12.1]{Ebin1970}: If $m(t) = Lv(t)$, one has
\begin{equation}\label{Eq:EPDiffSimple}
\partial_t m + \ad^*_{Km}m = 0\,,
\end{equation}
where $K \eqdef L^{-1}$ is the inverse of the differential operator $L$. The operator $K$ is defined by the reproducing kernel $k$ of the Hilbert space $H^2_0([0,1],\R)$ for which the following formula holds
\begin{equation}\label{Eq:Kernel}
k(s,t) = k_0(s,t) +\left(-1 + \frac{1}{2}(s+t) - \frac{1}{3}ts\right)(ts)^2 \,,
\end{equation}
where $k_0$ is defined by
 \begin{equation}
k_0(s,t) = 
\begin{cases}
1 + st + \frac{1}{2} ts^2-\frac 16 s^3 \text{ if } s<t \\
1 + st + \frac{1}{2} st^2-\frac 16 t^3 \text{ otherwise}\,.
\end{cases}
\end{equation}
This kernel is not a usual one in the literature and therefore we prove this formula in Appendix \ref{Ap:rkhs}.\\
Then, $K: L^2([0,1],\R) \mapsto L^2([0,1],\R)$ is defined by 
$K(f)(s) = \int_0^1 k(s,t) f(t) \ud t$.
\par
An important case which has been used a lot in applications is when the initial momentum is a sum of Dirac masses. We detail it now since we will need an explicit example of such geodesics in Section \ref{Sec:MainTheorem}.
In imaging applications, this case is known as the landmark manifold \cite{MicheliCurvatureLandmarks}, which is parametrized by, for any $n \in \N^*$, $$\mathcal{M}_n \eqdef \left\{ (q_1,\ldots,q_n) \in [0,1]^n \, ; \, q_i \neq q_j \text{ for } i\neq j \right\}\,.$$ 
Consider $m(0) = \sum_{i=1}^n p_i(0)\delta_{q_i(0)}$, then Equation \eqref{Eq:EPDiffSimple} can be rewritten only in terms of $p_i(t) \in \R$ and $q_i(t) \in [0,1]$ (see \cite[Proposition 6]{MicheliCurvatureLandmarks} for a detailed account). It is actually the Hamiltonian formulation of the geodesic equations of the so-called manifold of landmarks. Namely, one has
\begin{equation}\label{Eq:HamiltonianLandmarks}
\begin{cases}
\dot{q}_i = \sum_{j=1}^n k(q_i,q_j)p_j\\
\dot{p}_i = -\sum_{j=1}^n p_i  \partial_1 k(q_i,q_j)p_j
\end{cases}
\end{equation}
where $\partial_1$ denotes the partial derivative with respect to the first variable.
The corresponding Hamiltonian is
\begin{equation}\label{Eq:HamiltonianLand}\mathcal{H}(p,q) \eqdef \frac12 \sum_{i,j=1}^n p_i k(q_i,q_j)p_j\,.\end{equation}
Endowed with the co-metric \eqref{Eq:HamiltonianLand}, the manifold of landmarks $\mathcal{M}_n$ is a complete metric space for our kernel of interest \eqref{Eq:Kernel}.
Geodesic completeness \cite{BruverisVialard} implies that solutions of the Hamiltonian equations \eqref{Eq:HamiltonianLandmarks} are defined for all time. We prove in Appendix \ref{Ap:Proofs} the following proposition.

\begin{proposition}\label{Th:ExampleOfGeodesic}
There exists a geodesic on the group of diffeomorphisms $\on{Diff}_0^2([0,1])$ which satisfies
 the following properties: $\varphi(t,\frac 12) = \frac 12$, $\partial_t \varphi_x(t,1/2)$ is decreasing and $\lim_{t \to \infty} \varphi_x(t,1/2) = 0$.
\end{proposition}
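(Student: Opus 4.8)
The plan is to exploit the landmark reduction from Equation \eqref{Eq:HamiltonianLandmarks} together with a symmetry argument. First I would look for a geodesic whose initial momentum is a symmetric configuration of Dirac masses arranged about the point $x=\frac12$. Concretely, take $n=2$ with $q_1(0)=\frac12-a$, $q_2(0)=\frac12+a$ for some $0<a<\frac12$, and antisymmetric momenta $p_1(0)=-c$, $p_2(0)=c$ with $c>0$. The reflection $x\mapsto 1-x$ is an isometry of $H_0^2([0,1])$ because the metric \eqref{eq:MetricAtIdentity} only involves $\partial_{xx}$, and the kernel \eqref{Eq:Kernel} should respect this symmetry on the relevant diagonal; I would verify that $k(s,t)$ is invariant under $(s,t)\mapsto(1-s,1-t)$. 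Granting this, the Hamiltonian system \eqref{Eq:HamiltonianLandmarks} preserves the antisymmetric submanifold $\{q_1=1-q_2,\ p_1=-p_2\}$, and so the midpoint $x=\frac12$ stays fixed: $\varphi(t,\frac12)=\frac12$ for all $t$. This gives the first claimed property directly from geodesic completeness, which guarantees the solution exists for all time.

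Next I would analyze the behavior at the fixed point. Writing $\varphi_x(t,\cdot)$ for $\partial_x\varphi(t,\cdot)$, the evolution of $\varphi_x(t,\frac12)$ is governed by differentiating the flow equation \eqref{eq:ODE} in $x$, which yields $\partial_t\varphi_x = (\partial_x v)(t,\varphi)\,\varphi_x$. With the antisymmetric two-landmark configuration, the reconstructed vector field $v(t,\cdot)=\sum_j k(\cdot,q_j)p_j$ has a spatial derivative at $\frac12$ whose sign I would want to control; the antisymmetry forces the two landmarks to squeeze toward $\frac12$ from opposite sides while carrying momenta of opposite sign, producing a strain $\partial_x v(t,\frac12)<0$. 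This would make $\varphi_x(t,\frac12)$ strictly decreasing, giving the second property. The third property, $\lim_{t\to\infty}\varphi_x(t,\frac12)=0$, then says the deformation degenerates at the midpoint as $t\to\infty$: the flow compresses an ever-shrinking neighborhood of $\frac12$ without ever colliding the landmarks (which would leave $\mathcal{M}_n$), so the derivative of the flow collapses to zero while the diffeomorphism property $1+f'>0$ is preserved for each finite time.

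The main obstacle will be establishing the asymptotic claim $\lim_{t\to\infty}\varphi_x(t,\frac12)=0$ rather than the two qualitative statements, which follow fairly cleanly from symmetry and the sign of the strain. Geodesic completeness \cite{BruverisVialard} tells us the landmark trajectory survives for all time and stays inside $\mathcal{M}_n$, so $q_1(t)$ and $q_2(t)$ never coincide; but I still need a quantitative decay estimate for $\varphi_x(t,\frac12)=\exp\!\left(\int_0^t (\partial_x v)(s,\tfrac12)\,ds\right)$, i.e.\ I must show the integral $\int_0^\infty (\partial_x v)(s,\frac12)\,ds$ diverges to $-\infty$. This requires a lower bound on the magnitude of the negative strain that does not decay too fast, which I would obtain by tracking the conserved Hamiltonian \eqref{Eq:HamiltonianLand} and the landmark separation $2a(t)=q_2(t)-q_1(t)$: energy conservation pins down how $c(t)$ and $a(t)$ trade off, and I expect $a(t)\to 0$ so slowly (or $p(t)$ growing) that the accumulated strain still diverges. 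Since the excerpt states this proposition is proved in Appendix \ref{Ap:Proofs}, I would carry out this ODE asymptotic analysis there, reducing everything to a scalar phase-plane study of the pair $(a,c)$ and reading off the decay of $\varphi_x(t,\frac12)$ from the explicit kernel \eqref{Eq:Kernel}.
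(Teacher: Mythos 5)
Your overall strategy (two landmarks placed symmetrically about $x=\tfrac12$ with opposite momenta, the reflection symmetry pinning the midpoint, and the Jacobian ODE $\partial_t\varphi_x=(\partial_x v)(t,\varphi)\varphi_x$) is exactly the paper's, but your construction has the momenta pointing the wrong way. You take $p_1(0)=-c$ at $q_1(0)=\tfrac12-a$ and $p_2(0)=+c$ at $q_2(0)=\tfrac12+a$. Since $\dot q_1=k(q_1,q_1)p_1+k(q_1,q_2)p_2=c\,(k(q_1,q_2)-k(q_1,q_1))<0$ (the reflection symmetry of the kernel gives $k(q_1,q_1)=k(q_2,q_2)$, and strict positive-definiteness then gives $k(q_1,q_1)>k(q_1,q_2)$), your landmarks move \emph{apart}, the velocity field near $\tfrac12$ is expansive, $\partial_x v(t,\tfrac12)>0$, and $\varphi_x(t,\tfrac12)$ \emph{increases} — contradicting properties (2) and (3). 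Your prose ("squeeze toward $\tfrac12$") describes the correct dynamics, but it corresponds to the opposite sign choice, which is the one the paper makes: $p_1(0)=+\lambda$ on the left landmark and $p_2(0)=-\lambda$ on the right. The fix is trivial, but as written the construction fails.

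The more substantive gap is in how you propose to get $\lim_{t\to\infty}\varphi_x(t,\tfrac12)=0$. You plan a quantitative phase-plane analysis of $(a(t),c(t))$ using conservation of the Hamiltonian \eqref{Eq:HamiltonianLand} to show $\int_0^\infty(\partial_x v)(s,\tfrac12)\,ds=-\infty$, and you flag this as the main obstacle; you also use completeness only to say the landmarks never coincide. The paper turns completeness around and makes it do the work: the trajectory $t\mapsto(q_1(t),q_2(t))$ is a geodesic of the complete metric space $(\mathcal{M}_2,\mathcal{H})$ defined for all time, so it must leave every compact subset of $\mathcal{M}_2=\{q_1\neq q_2\}$; since each $q_i(t)$ is monotone and trapped in $[\tfrac12-\alpha,\tfrac12+\alpha]$, the only way to escape compacts is $q_2(t)-q_1(t)\to0$, i.e.\ both landmarks converge to $\tfrac12$. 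Because $\varphi(t,\cdot)$ is increasing and fixes $\tfrac12$, the image of the fixed interval $[\tfrac12-\alpha,\tfrac12+\alpha]$ collapses to the point $\tfrac12$, which combined with the monotone decrease of $\varphi_x(t,\tfrac12)$ yields the limit — no asymptotic ODE estimate is needed. Without this use of metric completeness your plan leaves the hardest claim unproved.
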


\subsection{Hamiltonian formulation of geodesic equations in Lagrangian coordinates}
Definition \ref{def:MetricOnTheGroup} is the natural definition of a right-invariant metric on a group. However, in such a form, the smoothness of the metric is not obvious. As proven in \cite{Ebin1970}, the way to prove it consists in switching from Eulerian to Lagrangian coordinates.

\begin{proposition}
The metric $G$ is a smooth Riemannian metric on $\on{Diff}_0^2([0,1])$.
\end{proposition}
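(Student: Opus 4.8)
The plan is to eliminate the two operations responsible for the loss of smoothness --- the inversion $\varphi\mapsto\varphi^{-1}$ and the left composition $u\mapsto u\circ\varphi$ --- by passing to Lagrangian coordinates, i.e. by performing the change of variables $y=\varphi(x)$ in the defining integral \eqref{eq:MetricH2}. The outcome I aim for is a purely \emph{algebraic} expression for $G_\varphi$ in terms of $\varphi'$, $\varphi''$ and the arguments and their derivatives, in which $\varphi$ enters only through pointwise products and through the inversion $g\mapsto 1/g$ on the Banach algebra $H^1([0,1])$. Once the metric is written in this form, smoothness in $\varphi$ will follow from the smoothness of these elementary operations together with the boundedness of the relevant multilinear maps, and no composition with $\varphi^{-1}$ ever needs to be differentiated.

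Concretely, write $\varphi=\on{Id}+f$ with $f\in H_0^2([0,1])$, so that $\varphi'=1+f'\in H^1([0,1])$ is bounded away from $0$ and $\varphi''=f''\in L^2([0,1])$. For $X_\varphi\in T_\varphi\on{Diff}_0^2([0,1])\cong H_0^2([0,1])$ set $u=X_\varphi\circ\varphi^{-1}$; differentiating $u\circ\varphi=X_\varphi$ twice by the chain rule and solving for the Eulerian second derivative gives
\begin{equation*}
(u''\circ\varphi)=\frac{X_\varphi''}{(\varphi')^2}-\frac{X_\varphi'\,\varphi''}{(\varphi')^3}\eqdef A_\varphi X_\varphi\,.
\end{equation*}
Substituting $y=\varphi(x)$ in \eqref{eq:MetricH2} then yields the Lagrangian form
\begin{equation*}
G_\varphi(X_\varphi,Y_\varphi)=\int_0^1 (A_\varphi X_\varphi)(x)\,(A_\varphi Y_\varphi)(x)\,\varphi'(x)\ud x\,,
\end{equation*}
which is exactly the structure I am after.

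I would then establish smoothness block by block. Since $H^1([0,1])$ is a Banach algebra continuously embedded in $C^0$, the inversion $g\mapsto 1/g$ is smooth on the open set $\{g\in H^1: \min g>0\}$, so $\varphi\mapsto(\varphi')^{-2}$ and $\varphi\mapsto(\varphi')^{-3}$ are smooth maps into $H^1$, while $\varphi\mapsto\varphi''$ is (linear, hence) smooth into $L^2$. Tracking regularities, $(\varphi')^{-2}X_\varphi''$ is a product $H^1\times L^2\to L^2$ and $(\varphi')^{-3}\varphi''X_\varphi'$ is a product $H^1\times L^2\times H^1\to L^2$ (using $X_\varphi'\in H^1$), so each uses only the bounded bilinear multiplication $H^1\times L^2\to L^2$ and the algebra product on $H^1$; consequently $\varphi\mapsto A_\varphi\in\mathcal{L}(H_0^2,L^2)$ is smooth. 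Finally $G_\varphi(X,Y)=T(\varphi',A_\varphi X,A_\varphi Y)$, where $T(a,b,c)=\int_0^1 abc\ud x$ is a bounded trilinear form on $C^0\times L^2\times L^2$, so $\varphi\mapsto G_\varphi$ is a composition of smooth maps with a bounded (hence smooth) multilinear map, and is therefore smooth as a map into the space of bounded symmetric bilinear forms on $H_0^2([0,1])$.

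It remains to check that each $G_\varphi$ is genuinely a (strong) inner product. By the change of variables $G_\varphi(X,X)=\|(X\circ\varphi^{-1})''\|_{L^2}^2=\|X\circ\varphi^{-1}\|_{H_0^2}^2$, and since $\varphi$ fixes the endpoints the map $X\mapsto X\circ\varphi^{-1}$ is a bounded invertible operator of $H_0^2([0,1])$; hence $G_\varphi(X,X)$ is equivalent to $\|X\|_{H_0^2}^2$, which gives positive-definiteness and the fact that the metric is strong. The one genuinely delicate point, and the place where the argument could break, is the $\varphi''$ term: because $\varphi''$ lives only in $L^2$ it may appear at most once in any product required to land in $L^2$, so the whole scheme hinges on never multiplying two merely-$L^2$ factors and on the fact that all the remaining factors $(\varphi')^{-k}$ and $X_\varphi'$ lie in the Banach algebra $H^1$. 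This bookkeeping is precisely what makes the Lagrangian expression well-behaved where the Eulerian one is not.
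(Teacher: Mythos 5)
Your proof is correct and follows essentially the same route as the paper: pass to Lagrangian coordinates by the change of variables $y=\varphi(x)$, obtain the algebraic expression $G_\varphi(X,Y)=\int_0^1 \frac{1}{\varphi'}\,\partial_x\bigl(\tfrac{X'}{\varphi'}\bigr)\,\partial_x\bigl(\tfrac{Y'}{\varphi'}\bigr)\ud x$ (your $A_\varphi$ is just this expression expanded), and conclude smoothness from the Banach/Hilbert algebra structure of $H^1$ and the smoothness of inversion on elements bounded away from zero. Your version merely makes explicit the multiplication bookkeeping ($H^1\times L^2\to L^2$, never two $L^2$ factors) that the paper compresses into ``polynomial functions on Hilbert algebras are smooth,'' and adds the nondegeneracy check that the paper states as a separate remark.
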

\begin{proof}
We will use the following identities,
\begin{align*}
\partial_x\left(\ph\i\right) &= \frac 1{\partial_{x}\ph \o \ph\i} \\
\partial_{xx}\left(\ph\i\right) &= \frac {\partial_{xx}\ph \o\ph\i}{\left(\partial_x\ph \o \ph\i\right)^3}\,.
\end{align*}
Note that, for a given function $X \in H^2_0(\R,\R)$ we have 
\begin{align*}
\partial_{x}(X \circ \varphi\i) = \left( \frac {\partial_{x}X}{\partial_{x}\ph} \right) \circ \ph\i \\
\partial_{xx}(X \circ \varphi\i) = \left( \frac {\partial_{x}\left(\frac{\partial_{x}X}{\partial_{x}\ph}\right)}{\partial_{x}\ph} \right) \circ \ph\i \,.
\end{align*}
And now the metric itself,
\begin{align*}
G_{\ph}(X, Y) &= \langle X\o\ph\i, Y\o\ph\i\rangle_{H^2} \\
&= \int_0^1  \frac{1}{\partial_{x}\ph} \left[  \partial_{x}\left( \frac{\partial_{x} X}{\partial_{x}\ph}\right) \right]^2 \, \ud x\,,
\end{align*}
We consider the metric $G$ as a mapping (with a little abuse of notations)
\[
G : \on{Diff}_0^2([0,1]) \to \mathcal{L}(H_0^2([0,1]), H_0^2([0,1]))\,,
\]
defined by the following relation, for all $X,Y \in H_0^2([0,1])$
\[
G_\ph(X,Y) = \langle G(\ph).X, Y \rangle_{H_0^{-2} \x H_0^2}\,.
\]
Since $H_0^2([0,1])$ and $H_0^1([0,1])$ are Hilbert algebras, polynomial functions on these domains are smooth and therefore the metric itself is smooth.
\end{proof}

We detailed the proof for two reasons: (1) these formulas will be used later on and (2) for ease of understanding. However, this proof is a particular case of \cite{Ebin1970}, where it is proved that the standard right-invariant $H^s$ Sobolev metric on $\on{Diff}^s(M)$ for $s>\on{dim}(M)/2+1$ is smooth.

\begin{corollary}
The following formula for the metric $G$ holds:
For every $X\in H_0^2([0,1])$ and $\varphi \in \on{Diff}_0^2([0,1])$ one has
\begin{equation}
G_\ph(X,X) = \int_0^1  \frac{1}{\partial_{x}\ph} \left[  \partial_{x}\left( \frac{\partial_{x}X}{\partial_{x}\ph}\right) \right]^2 \, \ud x\,.
\end{equation}

\end{corollary}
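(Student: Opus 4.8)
The plan is to derive the formula directly from the definition of the right-invariant metric (Definition \ref{def:MetricOnTheGroup}) together with the chain-rule identities established in the proof of the preceding proposition, since the claimed expression is precisely the diagonal case $Y=X$ of the computation already carried out there. First I would write, using \eqref{eq:MetricH2} and \eqref{eq:MetricAtIdentity},
\begin{equation*}
G_\ph(X,X) = \langle X\o\ph\i, X\o\ph\i\rangle_{H_0^2} = \int_0^1 \left[\partial_{xx}(X\o\ph\i)\right]^2 \ud x\,,
\end{equation*}
which reduces the problem to evaluating the second spatial derivative of $X\o\ph\i$.

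Next I would insert the identity
\begin{equation*}
\partial_{xx}(X\o\ph\i) = \left(\frac{\partial_x\left(\frac{\partial_x X}{\partial_x\ph}\right)}{\partial_x\ph}\right)\o\ph\i
\end{equation*}
obtained in the proof of the previous proposition by successive application of the chain rule and the formulas for $\partial_x(\ph\i)$ and $\partial_{xx}(\ph\i)$. Substituting this into the integral and performing the change of variables $x=\ph(y)$ — legitimate because $\ph\in\on{Diff}_0^2([0,1])$ is an increasing diffeomorphism of $[0,1]$ fixing the endpoints, with $\ud x=\partial_x\ph(y)\,\ud y$ — the composition with $\ph\i$ disappears and one power of $\partial_x\ph$ is absorbed by the Jacobian, leaving
\begin{equation*}
G_\ph(X,X) = \int_0^1 \frac{1}{\partial_x\ph}\left[\partial_x\left(\frac{\partial_x X}{\partial_x\ph}\right)\right]^2 \ud y\,,
\end{equation*}
which is the asserted formula after renaming the integration variable.

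Since the algebra is elementary, the only point requiring care is the analytic justification rather than the computation. Specifically, I would check that for $X\in H_0^2([0,1])$ and $\ph\in\on{Diff}_0^2([0,1])$ the composition $X\o\ph\i$ again belongs to $H_0^2([0,1])$, so that its $H_0^2$-norm genuinely equals $\int_0^1(\partial_{xx}(X\o\ph\i))^2\,\ud x$ with no boundary contributions, and that the change of variables is valid at the level of the $L^2$-integrable second derivatives. Both follow from the fact that $\partial_x\ph>0$ is bounded away from $0$ and that $H_0^2([0,1])$ and $H_0^1([0,1])$ are Hilbert algebras, exactly as invoked at the end of the proposition's proof. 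Hence the main obstacle is essentially the bookkeeping with the Dirichlet boundary conditions, and no idea beyond the preceding proposition is needed.
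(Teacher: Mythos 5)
Your proof is correct and follows exactly the route the paper takes: the corollary is obtained in the proof of the preceding proposition by combining the definition \eqref{eq:MetricH2} with the chain-rule identity for $\partial_{xx}(X\circ\varphi^{-1})$ and the change of variables $x=\varphi(y)$, which absorbs one factor of $\partial_x\varphi$ into the Jacobian. Your additional remarks on the analytic justification (that $X\circ\varphi^{-1}\in H_0^2$ and the change of variables is legitimate) only make explicit what the paper leaves implicit.
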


In order to simplify the next computations, we will use the following change of variable which will appear as a key point in the treatment of the variational problem of Riemannian cubics.
\begin{proposition}\label{Eq:PropositionChangeOfVariable}
Under the change of variable $q\eqdef \partial_{x}(\log(\partial_{x} \ph))$, one has 
 $\varphi \in \on{Diff}_0^2([0,1])$ is equivalent to $q \in L^2([0,1])$ satisfying the constraints 
\begin{align} 
&\int_0^1 q(x) \ud x = 0 \,,\label{eq:FirstConstraint}\\
& \int_0^1 e^{\int_0^x q(u) \ud u} \ud x= 1\,.\label{eq:SecondConstraint}
 \end{align}
\end{proposition}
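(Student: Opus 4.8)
The plan is to realize the change of variable as an explicit bijection and to verify, step by step, that each nonlinear operation involved preserves the relevant Sobolev regularity. The single analytic ingredient I will lean on throughout is the one-dimensional Sobolev embedding $H^1([0,1]) \hookrightarrow C^0([0,1])$, together with the fact that $H^1([0,1])$ is stable under composition with the exponential and the logarithm; these are precisely the Hilbert algebra properties already invoked for the smoothness of the metric.

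First I would treat the forward direction. Starting from $\ph = \on{Id} + f$ with $f \in H_0^2([0,1])$ and $\partial_x\ph = 1 + f' > 0$, I note that $\partial_x\ph \in H^1 \hookrightarrow C^0$ is continuous and strictly positive on the compact interval $[0,1]$, hence bounded below by some $c > 0$. Since $\partial_{xx}\ph = f'' \in L^2$, this gives
\begin{equation*}
q = \partial_x(\log(\partial_x\ph)) = \frac{\partial_{xx}\ph}{\partial_x\ph} \in L^2([0,1])\,.
\end{equation*}
The boundary conditions $f'(0)=f'(1)=0$ force $\partial_x\ph(0)=\partial_x\ph(1)=1$, so $\int_0^1 q \ud x = \log(\partial_x\ph(1)) - \log(\partial_x\ph(0)) = 0$, which is \eqref{eq:FirstConstraint}. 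Integrating $q$ from $0$ and using $\partial_x\ph(0)=1$ gives $e^{\int_0^x q \ud u} = \partial_x\ph(x)$, so $\int_0^1 e^{\int_0^x q \ud u}\ud x = \ph(1)-\ph(0) = 1$ by $f(0)=f(1)=0$, which is \eqref{eq:SecondConstraint}.

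Next I would treat the reverse direction, which also supplies the inverse map. Given $q \in L^2$ satisfying \eqref{eq:FirstConstraint}--\eqref{eq:SecondConstraint}, I set $g(x) = \int_0^x q(u)\ud u \in H^1 \hookrightarrow C^0$ and define $\ph(x) = \int_0^x e^{g(s)}\ud s$. Then $\partial_x\ph = e^g$ is continuous, positive and bounded on $[0,1]$, and its weak derivative $\partial_{xx}\ph = q\,e^g$ is an $L^2$ function times a bounded continuous one, hence in $L^2$; thus $\ph \in H^2$ with $\partial_x\ph > 0$. The two constraints translate precisely into the boundary data: $\ph(0)=0$ by construction and $\ph(1)=1$ from \eqref{eq:SecondConstraint}, while $\partial_x\ph(0) = \partial_x\ph(1) = 1$ from \eqref{eq:FirstConstraint}. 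Hence $f = \ph - \on{Id}$ lies in $H_0^2([0,1])$ with $1+f' > 0$, i.e. $\ph \in \on{Diff}_0^2([0,1])$. A direct check that the two assignments compose to the identity in both orders then closes the equivalence.

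The main obstacle, modest as it is, is the regularity bookkeeping rather than any delicate estimate: I must make sure that dividing by $\partial_x\ph$ and taking its logarithm keeps $q$ in $L^2$ in the forward direction, and that exponentiating sends $H^1$ back into $H^1$ in the reverse direction. Both reduce to the uniform boundedness of $\partial_x\ph$ together with a uniform lower bound $\partial_x\ph \geq c > 0$, and these in turn rest entirely on the embedding $H^1 \hookrightarrow C^0$ and the strict positivity of $\partial_x\ph$ on a compact interval.
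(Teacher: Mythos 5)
Your proof is correct and follows essentially the same route as the paper: the paper's own proof is the same direct computation, observing that the boundary conditions $\varphi'(1)=1$ and $\varphi(1)=1$ translate into \eqref{eq:FirstConstraint} and \eqref{eq:SecondConstraint}. You merely spell out the regularity bookkeeping (boundedness of $\partial_x\varphi$ from below via $H^1\hookrightarrow C^0$, and the explicit inverse map) that the paper leaves implicit as "direct computations".
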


\begin{proof}
Using the formula \eqref{def:GroupOfDiffeos}, these are direct computations but note that the boundary constraint  $\varphi'(1)=1$ implies that $\int_0^1 q(x) \ud x = 0 $ and the constraint $\varphi(1)=1$ can be written as $$\int_0^1 e^{\int_0^x q(u) \ud u} \ud x = 1\,.$$
\end{proof}

Note that the second constraint \eqref{eq:SecondConstraint} is nonlinear in $q$. We now introduce the following notations:

\begin{notation}
For a given $q \in L^2([0,1])$, we will use the following maps 
$$\eta: L^2([0,1]) \to H^1([0,1]) \quad \text{ and } \quad \varphi:  L^2([0,1]) \to H^2([0,1])$$ defined by
\begin{align}
& \eta(q)(x) = \exp\left(\int_0^x q(u) \ud u\right)\,,\\
& \varphi(q)(x) = \int_0^x \eta(q)(y) \ud y\,.
\end{align}
In order to alleviate notations, we will omit the argument $q$ in $\eta(q)$ and $\varphi(q)$ when no confusion is possible.\end{notation}

\begin{proposition}
Under this change of variable, the group $\on{Diff}_0^2([0,1])$ is the Hilbert submanifold $Q$ of $L^2([0,1])$ defined by the constraints \eqref{eq:FirstConstraint} and \eqref{eq:SecondConstraint}. 
\\
The tangent space at $q \in Q$ is 
\begin{equation}\label{eq:TangentSpace}
T_qQ = \left\{ X \in L^2([0,1],\R) \, : \, \int_0^1 X(x) \ud x = 0 \text{ and }  \int_0^1 X(x) \varphi(x) \ud x = 0 \right\}\,.
\end{equation}
\\
The metric reads, for $X \in T_qQ$
\begin{equation}\label{Metric}
G(q)(X,X) = \int_0^1\frac {X^2} {\eta(q)} \, \ud x\,.
\end{equation}
We denote the scalar product $G$ by $\langle \cdot , \cdot \rangle_{1/\eta}$.
\end{proposition}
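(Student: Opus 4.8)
The plan is to realise $Q$ as a regular level set of a smooth map and to read off both the tangent space \eqref{eq:TangentSpace} and the metric \eqref{Metric} from this description. Define $C = (C_1,C_2): L^2([0,1]) \to \R^2$ by $C_1(q) = \int_0^1 q \ud x$ and $C_2(q) = \int_0^1 \eta(q) \ud x - 1$, so that $Q = C^{-1}(0)$ by Proposition \ref{Eq:PropositionChangeOfVariable}. First I would check that $C$ is smooth: the primitive map $q \mapsto \int_0^{\cdot} q \ud u$ is bounded and linear from $L^2([0,1])$ into $C^0([0,1])$ (by Cauchy--Schwarz the sup-norm of the primitive is dominated by $\|q\|_{L^2}$), the composition with the exponential is smooth on the Banach algebra $C^0([0,1])$, and integrating against $\ud x$ is a bounded linear functional; hence $C_2$ is smooth and $C_1$ is linear. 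To apply the implicit function theorem in Hilbert space and conclude that $Q$ is a codimension-two Hilbert submanifold, the key point is the surjectivity of $\ud C(q)$ at every $q \in Q$.

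Next I would compute the differentials. One has $\ud C_1(q)\cdot X = \int_0^1 X \ud x$, while differentiating under the integral sign gives $\ud C_2(q)\cdot X = \int_0^1 \eta(q)(x)\big(\int_0^x X(u) \ud u\big) \ud x$. Using $\eta(q) = \partial_x\varphi$ and integrating by parts, together with $\varphi(0)=0$ and $\varphi(1)=1$ on $Q$, this simplifies to $\ud C_2(q)\cdot X = \int_0^1 X \ud x - \int_0^1 \varphi X \ud x$. The two functionals $X \mapsto \int_0^1 X \ud x$ and $X \mapsto \int_0^1 \varphi X \ud x$ are represented in $L^2$ by the functions $1$ and $\varphi$, which are linearly independent because $\varphi$ is strictly increasing, hence non-constant; therefore $\ud C(q)$ is onto $\R^2$, $0$ is a regular value, and $Q$ is a smooth submanifold. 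Its tangent space is $T_qQ = \ker \ud C(q)$, and combining the two kernel conditions yields exactly $\int_0^1 X \ud x = 0$ and $\int_0^1 \varphi X \ud x = 0$, which is \eqref{eq:TangentSpace}.

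For the metric I would transport the formula of the preceding corollary through the change of variables. Differentiating $q = \partial_x\log(\partial_x\varphi)$ along a variation $h \in H_0^2([0,1])$ of $\varphi$ gives the tangent map $h \mapsto X = \partial_x\big(\partial_x h/\partial_x\varphi\big)$; the boundary conditions $h(i)=h'(i)=0$ at $i=0,1$ make this a linear isomorphism $H_0^2([0,1]) \to T_qQ$, with inverse recovered by integrating $\partial_x h = (\partial_x\varphi)\int_0^{\cdot} X$ twice (the two vanishing moments of $X$ being exactly what forces $h(1)=h'(1)=0$). Substituting $X = \partial_x(\partial_x h/\partial_x\varphi)$ into $G_\varphi(h,h) = \int_0^1 \frac{1}{\partial_x\varphi}\big[\partial_x(\partial_x h/\partial_x\varphi)\big]^2 \ud x$ and using $\partial_x\varphi = \eta(q)$ gives at once $G(q)(X,X) = \int_0^1 X^2/\eta(q) \ud x$, which is \eqref{Metric}. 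The only genuine work lies in the surjectivity/independence check of the previous step and in the careful bookkeeping of boundary terms in the integrations by parts; the pleasant feature that makes the final substitution immediate is that the differential of the change of variables is precisely the operator $\partial_x \circ (1/\partial_x\varphi) \circ \partial_x$ already appearing in the metric of the corollary.
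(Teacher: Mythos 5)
Your proof is correct and follows essentially the same route as the paper: differentiate the two constraints (with the same integration by parts using $\eta=\partial_x\varphi$ and $\varphi(1)=1$) to identify $T_qQ$, then compute $\delta q=\partial_x\bigl(\partial_x(\delta\varphi)/\partial_x\varphi\bigr)$ and substitute into the corollary's formula to obtain the metric. The only difference is that you additionally verify surjectivity of the constraint differential via the regular value theorem, a point the paper's terser proof leaves implicit; this is a welcome but minor strengthening rather than a different argument.
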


\begin{proof}
We have to determine explicitly the tangent space. The first constraint \eqref{eq:FirstConstraint} is linear and this obviously leads to the constraint on a vector of the tangent space $X$, $\int_0^1 X(x) \ud x = 0$. The second constraint \eqref{eq:SecondConstraint} can be differentiated as follows
\begin{equation}
\int_0^1 \left(\int_0^x X(u) \ud u \right) \, \eta(q)(x) \ud x\,,
\end{equation} 
and it gives, by integration by parts, $\int_0^1 X(x) \varphi(x) \ud x = 0$ since $\int_0^1 X(x) \ud x = 0$.
\par 
To obtain the expression of the metric \eqref{Metric}, it suffices to remark that 
\begin{equation}
\delta q =  \delta \partial_{x}(\log(\partial_{x}\ph)) =  \partial_{x}(\delta \log(\partial_{x}\ph)) = \partial_{x}\left(\frac{\partial_{x}(\delta \varphi)}{\partial_{x}\varphi}\right)
\end{equation}
where we denoted by $\delta q$ and $\delta \varphi$ variations of, respectively, $q$ and $\varphi$. 
\end{proof}
We aim at computing the geodesics for the Riemannian metric above on the submanifold $Q$ for the metric $G$.
We need the variations of $\eta(q)$ with respect to $q$ in the ambient space $L^2$.
For $f \in L^2([0,1],\R)$, one has
$$ D\eta(q)(f)(x) =\eta(q)(x) \int_0^x f(y)\ud y  \,,$$
and its adjoint reads:
$$ D\eta(q)^*(f)(x) = \int_x^1 \eta(q)(y) f(y)\ud y  \,.$$

Since the geodesic equations are equations on the constrained submanifold $Q$, we need the formula of the orthogonal projection on the tangent space to the submanifold.
\begin{proposition}
Let $\pi_q$ be the orthogonal projection onto the tangent space $T_qQ$ at point $q$ for the metric $G$. 
One has 
\begin{equation}\label{eq:projection}
\pi_q(f) = f -  \begin{bmatrix} \eta & \varphi \eta \end{bmatrix} H_2^{-1}  \begin{bmatrix} \langle f , \eta \rangle_{1/\eta}  \\ \langle f , \varphi \eta \rangle_{1/\eta}  \end{bmatrix}
\end{equation}
where $H_2$ is the Hilbert matrix given by $H_2 = \begin{bmatrix} 1 &  1/2 \\  1/2 &  1/3 \end{bmatrix}$.
\\
Denote by $\pi^*_q:T^*Q \to T^*Q$ the adjoint of $\pi_q$ w.r.t. the $L^2$ scalar product. One has
\begin{equation}\label{eq:projectionDuale}
\pi_q^*(p) = p -  \begin{bmatrix} 1 & \varphi \end{bmatrix} H_2^{-1}  \begin{bmatrix} \langle p , 1 \rangle_{\eta}  \\ \langle p , \varphi  \rangle_{\eta}  \end{bmatrix}\,.
\end{equation}
\end{proposition}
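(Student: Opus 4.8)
The plan is to recognize the tangent space $T_qQ$ as the $G$-orthogonal complement of an explicit two-dimensional subspace and then apply the elementary formula for the orthogonal projection onto a finite-dimensional subspace. The first step is to re-express the two constraints cutting out $T_qQ$ in \eqref{eq:TangentSpace} in terms of the metric $G=\langle\cdot,\cdot\rangle_{1/\eta}$: for every $X\in L^2([0,1])$ the weight $1/\eta$ cancels the factor $\eta$, so that $\langle X,\eta\rangle_{1/\eta}=\int_0^1 X\,\ud x$ and $\langle X,\varphi\eta\rangle_{1/\eta}=\int_0^1 X\varphi\,\ud x$. Hence \eqref{eq:TangentSpace} says precisely that $T_qQ$ is the $G$-orthogonal complement of $V\eqdef\on{span}\{\eta,\varphi\eta\}$, and therefore $\pi_q=\on{Id}-P_V$, where $P_V$ is the $G$-orthogonal projection onto $V$.

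Next I would compute the Gram matrix $\Gamma_{ij}=\langle e_i,e_j\rangle_{1/\eta}$ of the basis $e_1=\eta,\ e_2=\varphi\eta$ of $V$. Writing $e_i=\varphi^{i-1}\eta$ and using $\varphi'=\eta$ together with the boundary values $\varphi(0)=0$ and $\varphi(1)=1$ (the latter being exactly the constraint \eqref{eq:SecondConstraint}), the change of variables $u=\varphi(x),\ \ud u=\eta\,\ud x$ gives
\[
\langle e_i,e_j\rangle_{1/\eta}=\int_0^1\varphi^{i+j-2}\eta\,\ud x=\int_0^1 u^{i+j-2}\,\ud u=\frac{1}{i+j-1}\,,
\]
so that $\Gamma$ is exactly the Hilbert matrix $H_2$. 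Characterizing $P_V(f)$ as the unique element $E c\in V$ with $\langle f-Ec,e_i\rangle_{1/\eta}=0$, where $E=\begin{bmatrix}\eta & \varphi\eta\end{bmatrix}$, forces $\Gamma c=\begin{bmatrix}\langle f,\eta\rangle_{1/\eta}\\ \langle f,\varphi\eta\rangle_{1/\eta}\end{bmatrix}$; solving for $c$ and substituting into $\pi_q=\on{Id}-P_V$ yields \eqref{eq:projection}.

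Finally, to obtain $\pi_q^*$ I would dualize \eqref{eq:projection} for the plain $L^2$ pairing, factor by factor. Using once more $\langle f,\eta\rangle_{1/\eta}=\langle f,1\rangle_{L^2}$ and $\langle f,\varphi\eta\rangle_{1/\eta}=\langle f,\varphi\rangle_{L^2}$, write $\pi_q=\on{Id}-E\,H_2^{-1}N$ with $N\colon L^2\to\R^2$, $Nf=\begin{bmatrix}\langle f,1\rangle_{L^2}\\ \langle f,\varphi\rangle_{L^2}\end{bmatrix}$. The $L^2$-adjoint of $N$ is $v\mapsto\begin{bmatrix}1 & \varphi\end{bmatrix}v$, the $L^2$-adjoint of $E\colon\R^2\to L^2$ is $g\mapsto\begin{bmatrix}\langle g,\eta\rangle_{L^2}\\ \langle g,\varphi\eta\rangle_{L^2}\end{bmatrix}=\begin{bmatrix}\langle g,1\rangle_{\eta}\\ \langle g,\varphi\rangle_{\eta}\end{bmatrix}$, and $H_2^{-1}$ is symmetric; composing the adjoints in reverse order turns $\pi_q=\on{Id}-E H_2^{-1}N$ into $\pi_q^*=\on{Id}-N^*H_2^{-1}E^*$, which is precisely \eqref{eq:projectionDuale}.

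The computation is routine once $V$ is identified, and there is no analytic difficulty: $V$ is finite-dimensional and $\eta,\varphi\eta$ are continuous, hence bounded, on $[0,1]$, so $P_V$ and $\pi_q$ are well defined and bounded on $L^2$. The one point I would treat with care — the crux of the bookkeeping — is keeping the three pairings $\langle\cdot,\cdot\rangle_{1/\eta}$, $\langle\cdot,\cdot\rangle_{\eta}$ and the plain $L^2$ product straight, since it is exactly the interplay of the weights $\eta$ and $1/\eta$ that accounts for the different weightings appearing in \eqref{eq:projection} and \eqref{eq:projectionDuale}.
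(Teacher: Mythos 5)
Your proposal is correct and follows essentially the same route as the paper: identify $T_qQ$ as the $G$-orthogonal complement of $\on{span}\{\eta,\varphi\eta\}$, compute the Gram matrix (which equals $H_2$ because $\partial_x\varphi=\eta$, $\varphi(0)=0$, $\varphi(1)=1$), and apply the standard finite-rank projection formula. The only difference is one of detail: you evaluate the Gram entries by the change of variables $u=\varphi(x)$ rather than by recognizing antiderivatives, and you spell out the $L^2$-dualization step for $\pi_q^*$, which the paper leaves implicit.
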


\begin{proof}
The two constraints can be written as
$
\langle f, \eta  \rangle_{1/\eta}= 0$ and $
\langle f, \eta \varphi  \rangle_{1/\eta} = 0$ and  Formula \eqref{eq:projection} follows. The Gram matrix of the vectors $1$ and $\varphi$ associated with the scalar product $\langle \cdot , \cdot \rangle_\eta$ is $H_2$. Indeed, we have $\int_0^1 \eta \ud x = 1$ and
\begin{equation*}
\int_0^1 \eta \, \varphi \ud x = \left[ \frac 12 \varphi^2 \right]_0^1 = \frac 12
\, \mbox{} \text{ and } \,\int_0^1 \eta \, \varphi^2 \ud x = \left[ \frac 13 \varphi^3 \right]_0^1 = \frac 13\,,
\end{equation*}
since $\partial_x \varphi = \eta$.
\end{proof}

\begin{theorem}\label{Th:GeodesicEquations}
The geodesic equation on $Q$ for the metric $G$ is, in its Hamiltonian formulation, the first-order system on $T^*Q$ defined by
\begin{equation}\label{FirstEquationSystem2}
\begin{cases}
\dot{q} = \eta(q)p\\
\dot{p} = - \frac 12 \int_x^1 \eta(q)p^2 \ud y +  \begin{bmatrix} 1 & \varphi \end{bmatrix} H_2^{-1}  \begin{bmatrix} a \\ b +c \end{bmatrix}
\end{cases}
\end{equation}
where the coefficients $a,b,c$ depend on $p,q$ and are given by:
\begin{subequations}
\begin{align}
&a = \frac 12 \int_0^1 x \, p^2 \circ \varphi^{-1} \ud x \,,\label{eq:FirstProjection}\\
&b = \frac 32 \int_0^1 \left( \int_0^x p \circ \varphi^{-1}  \right)^2 \ud x \, , \label{eq:SecondProjection1}\\
&c = \frac 14 \int_0^1 x^2 \, p^2 \circ \varphi^{-1} \ud x\,.\label{eq:SecondProjection2}
\end{align}
\end{subequations}
\end{theorem}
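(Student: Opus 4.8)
The plan is to treat the geodesic problem as a constrained variational problem on $Q\subset L^2([0,1])$ and read off the Hamiltonian equations, enforcing the two constraints \eqref{eq:FirstConstraint}--\eqref{eq:SecondConstraint} by Lagrange multipliers. Writing the energy Lagrangian as $\mathcal L(q,\dot q)=\frac12\int_0^1 \dot q^2/\eta(q)\,\ud x$, the conjugate momentum is $p=\delta\mathcal L/\delta\dot q=\dot q/\eta(q)$, which immediately gives the first equation $\dot q=\eta(q)p$; equivalently, $\mathcal H(q,p)=\frac12\int_0^1\eta(q)p^2\,\ud x$ is the co-metric Hamiltonian and $\dot q=\partial_p\mathcal H$. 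Note that the tangency requirement $\dot q\in T_qQ$ translates, via \eqref{eq:TangentSpace}, into the two conditions $\langle p,1\rangle_\eta=\int_0^1\eta p\,\ud x=0$ and $\langle p,\varphi\rangle_\eta=\int_0^1\eta p\varphi\,\ud x=0$ on the momentum.

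For the momentum equation I would first compute the unconstrained variation. Using $\delta\eta(q)(x)/\delta q(y)=\eta(q)(x)\mathbbm 1_{y<x}$, one obtains $-\partial_q\mathcal H=-\frac12\int_x^1\eta(q)p^2\,\ud y$, which is exactly the non-local term in \eqref{FirstEquationSystem2}. The gradients of the two constraint functionals are the constant $1$ and, after using $\varphi(1)=1$, the function $1-\varphi$, so the constraint forces lie in $\mathrm{span}\{1,\varphi\}$. Hence the constrained Euler--Lagrange equation takes the form $\dot p=-\frac12\int_x^1\eta p^2\,\ud y+\alpha\cdot 1+\beta\cdot\varphi$ for time-dependent multipliers $\alpha,\beta$. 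This is precisely the structure encoded by $\pi_q^*$ in \eqref{eq:projectionDuale}: the correction is the component along $\mathrm{span}\{1,\varphi\}$ that must be subtracted to keep $p$ in the physical subspace $\{p:\langle p,1\rangle_\eta=\langle p,\varphi\rangle_\eta=0\}$ on which $\dot q=\eta p$ stays tangent to $Q$.

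To fix $\alpha,\beta$ I would impose that the two tangency conditions above are preserved in time. Differentiating them and inserting $\dot q=\eta(q)p$, the derivative $\dot\eta(q)(x)=\eta(q)(x)\int_0^x\eta(q)p\,\ud u$, and the expression for $\dot p$, produces a $2\times2$ linear system $H_2\begin{bmatrix}\alpha\\\beta\end{bmatrix}=\begin{bmatrix}a\\b+c\end{bmatrix}$ whose matrix is the Gram matrix of $\{1,\varphi\}$ for $\langle\cdot,\cdot\rangle_\eta$ --- namely the Hilbert matrix $H_2$, exactly as in the projection formulas. Inverting yields $\begin{bmatrix}\alpha\\\beta\end{bmatrix}=H_2^{-1}\begin{bmatrix}a\\b+c\end{bmatrix}$, which is the claimed form in \eqref{FirstEquationSystem2}.

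The main work --- and the only place I expect real difficulty --- is verifying that the right-hand sides of this linear system are exactly $a$ and $b+c$. This rests on the change of variables $z=\varphi(x)$ (so $\ud z=\eta\,\ud x$ and $p$ is replaced by $p\circ\varphi^{-1}$) together with repeated integration by parts, using the already-enforced first-order constraints (in particular $\int_0^1 p\circ\varphi^{-1}\,\ud z=0$) to annihilate the boundary terms. Concretely, the force term $-\frac12\int_x^1\eta p^2$ contributes, after swapping the order of integration and the substitution, the weights $\tfrac12 x$ and $\tfrac14 x^2$, producing $a$ and $c$; meanwhile the velocity-dependent contributions coming from $\dot\eta$ and $\dot\varphi$ combine, after integration by parts, into $\tfrac32\int_0^1\bigl(\int_0^x p\circ\varphi^{-1}\bigr)^2\,\ud x=b$. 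Assembling these identities gives the system above and hence \eqref{FirstEquationSystem2}.
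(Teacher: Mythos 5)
Your proposal is correct and follows essentially the same route as the paper's proof in Appendix A: the paper phrases the constraint correction via the second fundamental form and the orthogonal projection onto $T_qQ$, but determining the multipliers $\alpha,\beta$ by preservation of $\langle p,1\rangle_\eta=\langle p,\varphi\rangle_\eta=0$ and inverting the Gram matrix $H_2$ is exactly the computation carried out there. The part you leave implicit --- the integral identities giving $a$ and $b+c$ via the change of variables $z=\varphi(x)$, integration by parts, and the conservation laws $\frac{\ud}{\ud t}\int_0^1\eta p\,\ud x=\frac{\ud}{\ud t}\int_0^1\varphi\,\eta\, p\,\ud x=0$ --- is precisely the ``lengthy but straightforward'' computation the paper performs, and your description of how it goes is accurate.
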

We postpone the proof of the theorem in appendix \ref{Ap:ComputationOfGeodesicEquations} since it is based on lengthy but rather straighforward computations.
Note that in the first equation of the geodesic equations as stated in \eqref{FirstEquationSystem2}, the constraints are implicit: For an intial $(p,q) \in T^*Q$, the local solution to the geodesic equation will stay in $T^*Q$. 
Moreover, this Hamiltonian formulation enables to retrieve easily in our context a similar result presented in \cite{Kolev1} about regularity of geodesics on the group of diffeomorphisms of the circle as explained below.
\begin{corollary}\label{Th:Regularity}
Let $\varphi_0= \on{Id}$ and $\varphi_1 \in H^n$ with $n\geq 3$ be two diffeomorphisms in $\on{Diff}^2_0([0,1])$.
Then, for every geodesic joining $\varphi_0$ and $\varphi_1$, the initial tangent vector lies in $H^n([0,1])$.
\end{corollary}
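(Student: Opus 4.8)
The plan is to work entirely in the Hamiltonian coordinates of Theorem \ref{Th:GeodesicEquations} and to bootstrap the spatial regularity of the momentum $p$. First I would record the three consequences of the change of variable $q = \partial_x(\log\partial_x\varphi)$. Since $\varphi(0)=\on{Id}$ we have $q(0)=0$; since $\varphi_1\in H^n$ with $\partial_x\varphi_1$ bounded away from $0$, the identity $q = \partial_{xx}\varphi/\partial_x\varphi$ together with the algebra property of $H^{n-1}$ (so that $1/\partial_x\varphi_1\in H^{n-1}$) gives $q(1)\in H^{n-2}$; and differentiating the change of variable at $\on{Id}$ gives $\dot q(0)=\partial_{xx}\dot\varphi(0)$, while $\dot q(0)=\eta(q(0))p(0)=p(0)$ because $\eta(0)=1$. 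Hence the initial tangent vector $\dot\varphi(0)\in H_0^2$ lies in $H^n$ if and only if $p(0)\in H^{n-2}$, and this last claim becomes the goal.

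Next I would set up the a priori regularity and the mechanism driving the induction. Any geodesic solves the Hamiltonian system \eqref{FirstEquationSystem2} on $T^*Q\subset L^2\times L^2$, so $t\mapsto(q(t),p(t))$ is continuous (indeed smooth) into $L^2\times L^2$; compactness of $[0,1]$ then gives a uniform bound $\sup_t(\|q(t)\|_{L^2}+\|p(t)\|_{L^2})<\infty$. From $q\in L^2$ one gets $\eta(q)=\exp(\int_0^x q)\in H^1$, bounded above and below by positive constants, and $\varphi=\int_0^x\eta\in H^2$. The key gain-of-one-derivative is visible in \eqref{FirstEquationSystem2}: the antiderivative $\int_x^1\eta(q)p^2\,\ud y$ is one degree smoother than its integrand $\eta p^2$, while the correction $\begin{bmatrix}1 & \varphi\end{bmatrix}H_2^{-1}\begin{bmatrix}a\\ b+c\end{bmatrix}$ is a combination of $1$ and $\varphi$ with scalar coefficients, hence at least as regular. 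I would isolate the extraction of regularity for $p(0)$ as follows: from $\dot p\in H^k$ one has $p(t)-p(0)=\int_0^t\dot p\in H^k$, and evaluating the first equation at $t=1$ gives $q(1)=p(0)\int_0^1\eta(q)\,\ud t+\int_0^1\eta(q)\,(p-p(0))\,\ud t$, so $p(0)=\big(q(1)-\int_0^1\eta(q)(p-p(0))\,\ud t\big)\big/\!\int_0^1\eta(q)\,\ud t$, where $E:=\int_0^1\eta(q)\,\ud t$ is bounded below by a positive constant and therefore $1/E$ is as regular as $E$.

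The induction then reads: if $q,p\in C^0([0,1],H^s)$ and $q(1)\in H^{s+1}$ for some $s\geq 1$, then $q,p\in C^0([0,1],H^{s+1})$. Indeed for $s\geq 1$ the algebra property of $H^s$ gives $\eta(q)\in H^{s+1}$ and $\eta p^2\in H^s$, hence $\int_x^1\eta p^2\in H^{s+1}$ and $\dot p\in H^{s+1}$; the extraction above then yields $p(0)\in H^{s+1}$, so $p\in H^{s+1}$, and finally $\dot q=\eta p\in H^{s+1}$ with $q(0)=0$ gives $q\in H^{s+1}$. Iterating from $s=1$ to $s=n-3$ produces $p(0)\in H^{n-2}$, which is the goal. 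The \emph{main obstacle} is the base case $s=0$: there $L^2$ is not an algebra and $p^2$ is only $L^1$, so the integral term gains regularity only into $W^{1,1}$. I would circumvent this by a preliminary $L^\infty$ bootstrap, exploiting that $|\int_x^1\eta p^2|\leq\|\eta\|_\infty\|p\|_{L^2}^2$ shows $\dot p\in L^\infty$ already for $p\in L^2$; hence $p-p(0)\in L^\infty$ and the extraction gives $p(0)\in L^\infty$, so $p\in L^\infty$ uniformly. Then $p^2\in L^2$, which upgrades $\int_x^1\eta p^2$ to $H^1$ and makes the step $s=0\to s=1$ work, feeding the induction above.
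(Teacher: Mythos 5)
Your proof is correct and follows essentially the same route as the paper: the same Hamiltonian bootstrap, exploiting the gain of one derivative in the integral term of the momentum equation and the same extraction formula $p(0)=\bigl(q(1)-q(0)-\int_0^1\eta(q)(p-p(0))\,\ud t\bigr)/\int_0^1\eta(q)\,\ud t$, followed by induction on the Sobolev index. The only (immaterial) difference is at the base case, where you pass through an $L^\infty$ bound on $\dot p$ while the paper routes the same step through $W^{1,1}\hookrightarrow L^\infty$.
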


\begin{proof}
In this proof, the spatial notation will be omitted and $p,q,\eta(q),\ldots$ will denote time dependent mappings with values in spaces of functions that depend on $x$, the spatial variable. Let us recall the geodesic equations 
\begin{subequations}
\begin{align}
&\dot{q} = \eta(q)p\,,\label{eq:qEq}\\
& \dot{p} = - \frac 12 \int_x^1 \eta(q)p^2 \ud y +  \begin{bmatrix} 1 & \varphi \end{bmatrix} H_2^{-1}  \begin{bmatrix} a \\ b +c \end{bmatrix} \, . \label{Eq:MomentumEquation}
\end{align}
\end{subequations}
The key observation is that the momentum equation \eqref{Eq:MomentumEquation} has more regularity on its right-hand side due to the integral operator. The hypothesis implies $q(0),q(1) \in H^{n-2}$. We prove the result by induction on $n$ and we first assume that $n=3$. 
Since $(p,q) \in L^2([0,1]^2)$ is a solution of the geodesic equations above, $p\in C^0([0,1],L^2([0,1]))$ and therefore $p^2 \in C^0([0,1],L^1([0,1])$ and since $\eta(q) \in C^0([0,1])$, we have that $\dot{p} \in C^0([0,1],W^{1,1}([0,1]))$. Now, observe that $p(t) = p(0) + \int_0^t \dot{p}(s) \ud s$ and thus 
\begin{equation*}
q(t) = q(0) + \left( \int_0^t \eta(q)(s) \ud s \right)\,p(0) + \int_0^t \eta(q) \int_0^s \dot{p}(u) \ud u \ud s\,.
\end{equation*}
This formula gives $p(0)$ in terms of $q(1)$ and $q(0)$.
\begin{equation}\label{Eq:FormulaBootstrap}
p(0) = \frac{1}{\int_0^1 \eta(q)(s) \ud s} \left(q(1) -q(0) -  \int_0^1 \eta(q) \left(\int_0^s \dot{p}(u) \ud u\right) \ud s \right)\,.
\end{equation}
The function $x \mapsto \int_0^1 \eta(q)(s,x) \ud s$ belongs to $H^1$ and is strictly positive. Thus, $\frac{1}{\int_0^1 \eta(q)(s) \ud s}$ belongs to $H^1$. As a consequence, we have that 
$p(0) \in W^{1,1}$ since $q(1) - q(0) \in H^1$ and the remaining term lies in $W^{1,1}$. 
Now, as $p$ is in $W^{1,1}$, the term $\int_x^1 \eta(q)p^2 \ud y$ is in $C^1$ and thus in $H^1$. Therefore, going back to formula \eqref{Eq:FormulaBootstrap}, we have that $p(0) \in H^1$, which implies that $\dot{\varphi}(0) \in H^3$.
\par
Let us assume that the result is proven for $n\geq 3$, and we prove it for $n+1$. We thus have $\eta(q) \in H^{n-1}$ and $p^2 \in H^{n-2}$ since $H^{n-2}$ is a Hilbert algebra. It implies that  $\dot{p} \in H^{n-1}$ by formula \eqref{Eq:MomentumEquation}. Therefore, formula \eqref{Eq:FormulaBootstrap} gives $p(0) \in H^{n-1}$ since $\int_0^t \eta(q)(s) \ud s$ in $H^{n-1}$ and proceeding as in the first step of the induction, we obtain $\dot{\varphi}(0) \in H^{n+1}$.
\end{proof}

Although the proof could be extended to other types of regularity such as fractional Sobolev index, our motivation consisted in showing potential applications of these Hamiltonian equations, which take advantage of the change of variable introduced in Proposition \ref{Eq:PropositionChangeOfVariable}.
\par
Since we will be interested in weak convergence, note that we can decompose the projection into two terms, one associated with $\dot{p}$ and one associated with $\frac 12 \int_x^1 \eta(q)p^2 \ud y$. The former is continuous with respect to the weak topology on $Q \subset L^2([0,1])$ whereas the latter is not.
\par
We will need to work on a more explicit representation of the solutions to the first equation. Indeed, in the first equation of system \eqref{FirstEquationSystem2} the constraint is implicit and we will make it explicit by introducing the projection $\pi^*_q$. Thus, we will be able to define solutions that will be useful to characterize the relaxation of the functional. Hereafter, we work again implicitly on time dependent variable with values in functional spaces. 
%
\begin{theorem}\label{th:Existence}
For a given path $p \in  L^2([0,1],L^\infty([0,1])$ and $q(0) \in L^\infty([0,1])$, there exists a unique solution $q \in H^1([0,1],L^\infty([0,1])$ to
\begin{equation}\label{ProjectedSystem}
\dot{q} = \eta(q) \pi^*_q(p)\,.
\end{equation}
\end{theorem}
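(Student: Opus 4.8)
The plan is to read \eqref{ProjectedSystem} as a Carath\'eodory ordinary differential equation on the Banach space $X \eqdef L^\infty([0,1])$ in the spatial variable, with $t\mapsto p(t)$ acting as a measurable, $L^2$-in-time coefficient. Concretely, writing $F(t,q)\eqdef \eta(q)\,\pi^*_q(p(t))$, I would look for a fixed point of the integral operator
\[
(\mathcal{T}q)(t) = q(0) + \int_0^t \eta(q(s))\,\pi^*_{q(s)}(p(s)) \ud s
\]
in $C^0([0,\tau],X)$ for a suitable $\tau$, and then continue the local solution. The whole existence-and-uniqueness argument then reduces to verifying that $F$ is Carath\'eodory with the right growth and Lipschitz bounds: measurable in $t$, and locally Lipschitz in $q$ with a Lipschitz modulus that is integrable in time.

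The second step is to collect the elementary estimates that make $F$ tractable. On a ball $\{\|q\|_\infty\le R\}$ the map $q\mapsto \eta(q)$ is bounded, bounded below by $e^{-R}$, and Lipschitz from $L^\infty$ into $C^0$: since $\eta(q)(x)=\exp(\int_0^x q)$, the mean value theorem applied to the exponential gives $\|\eta(q_1)-\eta(q_2)\|_\infty\le e^R\|q_1-q_2\|_\infty$ (the interval has length one, so $\|\cdot\|_{L^1}\le\|\cdot\|_\infty$), and the same holds for $\varphi(q)=\int_0^\cdot \eta(q)$. Consequently the two scalar coefficients $\langle p,1\rangle_\eta=\int_0^1 p\,\eta \ud x$ and $\langle p,\varphi\rangle_\eta=\int_0^1 p\,\varphi\,\eta \ud x$ entering \eqref{eq:projectionDuale} are, for fixed $p(t)$, Lipschitz in $q$ with modulus bounded by $\|p(t)\|_\infty$ times a constant depending only on $R$; as $H_2^{-1}$ is a fixed matrix, the same is true of $\pi^*_q(p(t))$, hence of the product $F(t,q)$. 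This yields $\|F(t,q)\|_\infty\le C(R)\|p(t)\|_\infty$ and $\|F(t,q_1)-F(t,q_2)\|_\infty\le L(R)\|p(t)\|_\infty\,\|q_1-q_2\|_\infty$ for $\|q_i\|_\infty\le R$. Since $t\mapsto\|p(t)\|_\infty$ lies in $L^2([0,1])\subset L^1([0,1])$, the operator $\mathcal{T}$ is a contraction on $C^0([0,\tau],X)$ whenever $\int_0^\tau\|p(s)\|_\infty \ud s$ is small, giving local existence, while the Lipschitz bound with Gronwall's lemma gives uniqueness on any interval where a solution exists.

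The delicate point, which I expect to be the main obstacle, is upgrading this local solution to one on all of $[0,1]$, i.e.\ producing an a priori bound on $\|q(t)\|_\infty$ — equivalently on $\sup_x\int_0^x q(t)$ — ruling out finite-time escape. The difficulty is genuine: the factor $\eta(q)=\exp(\int_0^\cdot q)$ makes the naive differential inequality for $m(t)=\|q(t)\|_\infty$ of the form $m'(t)\lesssim\|p(t)\|_\infty\,e^{Cm(t)}$, which by itself controls the solution only on a short interval (indeed the \emph{unprojected} equation $\dot q=\eta(q)p$ does blow up in finite time for large constant $p$). The remedy is to exploit the algebraic structure of the projection rather than bound $\pi^*_q(p)$ crudely: the coefficients in \eqref{eq:projectionDuale} are tuned through $H_2^{-1}$ precisely so that $\pi^*_q(p)$ is $\eta$-orthogonal to the span of $1$ and $\varphi$, so that the quantities $\int_0^1\eta\,\pi^*_q(p)\ud x$ and $\int_0^1\eta\,\varphi\,\pi^*_q(p)\ud x$ — which drive the growth — are controlled, vanishing identically on the constraint set $Q$ and, off $Q$, governed by the deviation of $\varphi(1)$ from $1$. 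Since $\partial_t\!\int_0^\cdot q=\int_0^\cdot\eta\,\pi^*_q(p)$ vanishes at $x=0$ and inherits this control at $x=1$, it is this cancellation that tames the otherwise exponential growth and should close a Gronwall estimate using only $\int_0^1\|p(s)\|_\infty\ud s<\infty$. Once the a priori bound is in hand, the local solution extends to $[0,1]$; finally, from $\dot q(t)=F(t,q(t))$ with $\|F(t,q(t))\|_\infty\le C\|p(t)\|_\infty$ and $\|p\|_\infty\in L^2([0,1])$, one reads off $\dot q\in L^2([0,1],L^\infty([0,1]))$, so that $q\in H^1([0,1],L^\infty([0,1]))$ as claimed.
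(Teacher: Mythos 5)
Your first two steps coincide with the paper's proof: local existence and uniqueness via the Carath\'eodory/fixed-point framework on $L^\infty([0,1])$, with the Lipschitz estimate $\|\eta(q_1)\pi^*_{q_1}(p)-\eta(q_2)\pi^*_{q_2}(p)\|_\infty\le C(r)\|p\|_\infty\|q_1-q_2\|_\infty$ on bounded balls, and you correctly identify that the whole weight of the theorem rests on an a priori bound on $\|q(t)\|_\infty$ ruling out finite-time blow-up. The gap is that you do not actually produce this bound, and the mechanism you propose is not the one that works. Knowing that $\partial_t\int_0^x q=\int_0^x\eta\,\pi^*_q(p)\,\ud y$ vanishes at $x=0$ and at $x=1$ (the latter being the $\eta$-orthogonality of $\pi^*_q(p)$ to the constant $1$, i.e.\ conservation of $\int_0^1 q\,\ud x$) gives no control whatsoever on the intermediate values of $x\mapsto\int_0^x q$, and it is $\sup_x|\int_0^x q|$ that enters $\|\eta(q)\|_\infty$. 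Moreover, contrary to what you suggest, the paper \emph{does} bound the projection crudely: $\|\pi^*_q(p)\|_\infty\le 3\|p\|_\infty$ uniformly in $q$.

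The estimate that closes the argument is a two-tier bootstrap through the $L^1$ norm in space. The quantity $\|\eta(q(t))\|_{L^1}=\int_0^1\eta(q(t))\,\ud x$ is conserved along \eqref{ProjectedSystem} (an integration by parts gives $\frac{\ud}{\ud t}\varphi(1)=\varphi(1)\langle\pi^*_q p,1\rangle_\eta-\langle\pi^*_q p,\varphi\rangle_\eta=0$; this conservation, not the endpoint cancellation of $\partial_t\int_0^x q$, is where the tuning of the projection through $H_2^{-1}$ pays off), and together with $\|\varphi\|_\infty\le\|\eta\|_{L^1}$ it yields $\|\pi^*_{q(t)}(p(t))\|_\infty\le C\|p(t)\|_\infty$ with $C$ independent of $t$. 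The decisive step is then to measure $\dot q$ in $L^1$ in space, where only $\|\eta\|_{L^1}$ --- not $\|\eta\|_\infty$ --- appears:
\begin{equation*}
\|\dot q(t)\|_{L^1([0,1])}\le\|\eta(q(t))\|_{L^1}\,\|\pi^*_{q(t)}(p(t))\|_\infty\le C\|p(t)\|_\infty\,.
\end{equation*}
Hence $\|q(t)\|_{L^1}\le\|q(0)\|_{L^1}+C\int_0^t\|p(s)\|_\infty\,\ud s$, so $\|\eta(q(t))\|_\infty\le\exp\left(\|q(t)\|_{L^1}\right)$ is bounded a priori by a single exponential of $\int_0^t\|p\|_\infty\,\ud s$, with no Gronwall loop and no Riccati-type blow-up. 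Only then does one return to $\|\dot q\|_\infty\le\|\eta\|_\infty\|\pi^*_q p\|_\infty$, integrate in time to bound $\|q(t)\|_\infty$, and read off $q\in H^1([0,1],L^\infty([0,1]))$ from $\|p(\cdot)\|_\infty\in L^2([0,1])$ as in your last sentence. Without this intermediate $L^1$-in-space control, your argument does not exclude blow-up before $t=1$.
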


The proof is given in Appendix \ref{Ap:Proofs}.

\begin{remark}
It might be surprising for the reader that we work with the space $L^2([0,1],L^\infty([0,1])$. Actually, we are not able to extend the previous result for the space $L^2([0,1],L^2([0,1])$, which is even probably wrong. What makes the proofs work with the sup norm is its invariance with respect to reparametrizations. In Eulerian coordinates, the system would be  well posed, however in Lagrangian coordinates, it is not true any more since the behavior of the right-invariant norm is not the same between Lagrangian and Eulerian coordinates.
\end{remark}

Since the weak topology will be studied, we present the key properties of the metric under weak convergence. Again, the proof of the following lemma is given in Appendix \ref{Ap:Proofs}.
\begin{lemma}\label{th:WeakConvergenceLemma}
If $q_n$ weakly converges to $q$ in $H^1([0,1],L^2([0,1])$, then 
\begin{enumerate}
\item $\eta(q_n)$ converges to $\eta(q)$ strongly in $(C^0(D),\| \cdot \|_\infty)$,
\item $\varphi(q_n)$ converges to $\varphi(q)$ strongly in $(C^0([0,1],C^1([0,1])),\| \cdot \|_{\infty,1})$ (the norm being the sup norm in time of the sup norm on $C^1$),
\item $\pi_{q_n}$ and $\pi_{q_n}^*$ strongly converge to, respectively, $\pi_{q}$ and $\pi_{q}^*$ as linear operators for the operator norm on $L^2$ in $C^0([0,1],\mathcal{L}(L^2([0,1])))$. 
\item If, in addition, $p_n$ weakly converges to $p$ in $H^1([0,1],L^2([0,1])$, then $\frac d {dt} \pi_{q_n} $ and $\frac d {dt} \pi_{q_n}^* $strongly converges as operators on $L^2$ in $C^0([0,1],\mathcal{L}(L^2([0,1])))$ (the norm being the sup norm in time of the operator norm).
\item Let $z_n \in H^1([0,1],L^2([0,1])$ be a weakly convergent sequence to $0$, then, under the above assumptions, we have $\pi_{q_n}(z_n) - z_n$ and  $\pi_{q_n}^*(z_n) - z_n$ strongly converge to $0$. Moreover,  $\left(\frac d {dt} \pi_{q_n}\right)(z_n)$ strongly converges to $0$.
\end{enumerate}

\end{lemma}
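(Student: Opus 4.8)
The plan is to reduce every assertion to the uniform convergence on $D$ of the two scalar fields $\eta(q_n)$ and $\varphi(q_n)$, because the projectors $\pi_{q_n}$, $\pi_{q_n}^*$ depend on $q_n$ only through $\eta(q_n)$ and $\varphi(q_n)$ via the explicit rank-two formulas \eqref{eq:projection} and \eqref{eq:projectionDuale} (the matrix $H_2$ being constant, after rewriting $\langle f,\eta\rangle_{1/\eta}=\int_0^1 f$ and $\langle f,\varphi\eta\rangle_{1/\eta}=\int_0^1 f\varphi$). Once items (1)–(2) are secured, items (3)–(5) become continuity and compactness statements about these algebraic formulas.

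For (1), set $Q_n(t,x)\eqdef\int_0^x q_n(t,u)\,\ud u$, so that $\eta(q_n)=e^{Q_n}$. The embedding $H^1([0,1],L^2([0,1]))\hookrightarrow C^0([0,1],L^2([0,1]))$ gives a uniform bound $\sup_t\|q_n(t,\cdot)\|_{L^2}\le C$, whence by Cauchy–Schwarz $|Q_n(t,x)-Q_n(t,x')|\le C\sqrt{|x-x'|}$ uniformly in $t,n$; similarly $\|q_n(t,\cdot)-q_n(t',\cdot)\|_{L^2}\le\sqrt{|t-t'|}\,\|\dot q_n\|_{L^2(D)}$ gives equicontinuity in $t$. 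Thus $\{Q_n\}$ is bounded and equicontinuous on $D$, Arzelà–Ascoli produces a $C^0(D)$-limit, and the weak convergence $q_n\rightharpoonup q$ in $L^2(D)$ (preserved by the bounded spatial antiderivative) identifies it as $Q\eqdef\int_0^x q$; hence the whole sequence satisfies $Q_n\to Q$ and $\eta(q_n)\to\eta(q)$ in $C^0(D)$. Item (2) is then immediate, since $\varphi(q_n)=\int_0^x\eta(q_n)$ converges uniformly together with its $x$-derivative $\eta(q_n)$. For (3), the difference $\pi_{q_n}-\pi_q$ is by \eqref{eq:projection} a rank-two operator whose coefficients are products of $\eta(q_n)$, $\varphi(q_n)$; since each elementary factor $f\mapsto(\int_0^1 f)\,g$ has $\L(L^2)$-norm at most $\|g\|_{L^2}$, the operator norm is dominated, uniformly in $t$, by the $C^0(D)$-distances of $\eta(q_n),\varphi(q_n)$ to their limits, and the same holds for $\pi_{q_n}^*$ via \eqref{eq:projectionDuale}.

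The delicate points are (4)–(5), and the main obstacle is that the time derivatives bring in $\dot q_n$, which converges only weakly. Here I would use the geodesic relation $\dot q_n=\eta(q_n)p_n$ from the first line of \eqref{FirstEquationSystem2} together with the hypothesis $p_n\rightharpoonup p$ in $H^1$, which makes $p_n$ bounded in $C^0([0,1],L^2([0,1]))$. Writing $\partial_t\eta(q_n)=\eta(q_n)\int_0^x\eta(q_n)p_n$ and $\partial_t\varphi(q_n)=\int_0^x\partial_t\eta(q_n)$, the decisive fact is that the spatial antiderivative $g\mapsto\int_0^x g$ is compact from $L^2([0,1])$ into $C^0([0,1])$: it converts the weak convergence $\eta(q_n)p_n\rightharpoonup\eta(q)p$ (strong times weak) into uniform-in-$x$ convergence for each fixed $t$, while the $C^0_tL^2_x$ bound on $p_n$ bounds $\partial_t\eta(q_n),\partial_t\varphi(q_n)$ uniformly in $C^0(D)$. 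Upgrading this pointwise-in-$t$ convergence to uniformity in $t$ is done by equicontinuity in time (again from the $H^1$ bounds) and Arzelà–Ascoli, yielding $\partial_t\eta(q_n)\to\partial_t\eta(q)$ and $\partial_t\varphi(q_n)\to\partial_t\varphi(q)$ in $C^0(D)$; differentiating \eqref{eq:projection}–\eqref{eq:projectionDuale} then gives (4) exactly as in (3).

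Finally, for (5) with $z_n\rightharpoonup 0$ in $H^1$, I would show that the scalar coefficients $\int_0^1 z_n$ and $\int_0^1 z_n\,\varphi(q_n)$ (and $\int_0^1 z_n\,\partial_t\varphi(q_n)$ for the derivative claim) tend to $0$ uniformly in $t$: each is bounded in $H^1([0,1])$ in the time variable and converges weakly to $0$, so the compact embedding $H^1([0,1])\hookrightarrow C^0([0,1])$ forces uniform convergence, the spatial factors $\varphi(q_n),\partial_t\varphi(q_n)$ being uniformly bounded by (2) and the previous paragraph. Since in \eqref{eq:projection}–\eqref{eq:projectionDuale} these scalars multiply the $C^0(D)$-bounded fields $\eta(q_n)$, $\varphi(q_n)\eta(q_n)$, and after differentiation $\partial_t\eta(q_n)$ etc., it follows that $\pi_{q_n}(z_n)-z_n$, $\pi_{q_n}^*(z_n)-z_n$ and $\bigl(\tfrac{d}{dt}\pi_{q_n}\bigr)(z_n)$ all converge strongly to $0$. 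I expect the uniform-in-time control of $\partial_t\eta(q_n)$ and $\partial_t\varphi(q_n)$, obtained through the compactness of the spatial antiderivative together with the relation $\dot q_n=\eta(q_n)p_n$, to be the crux of the whole argument.
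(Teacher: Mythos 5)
Your proof is correct and follows essentially the same route as the paper: establish uniform convergence of $\eta(q_n)$ and $\varphi(q_n)$ (and, using $\dot q_n=\eta(q_n)\pi_{q_n}^*(p_n)$, of their time derivatives) by a compactness argument, then read off (3)--(5) from the explicit rank-two formulas \eqref{eq:projection}--\eqref{eq:projectionDuale}. The only difference is cosmetic --- you run Arzel\`a--Ascoli by hand with explicit H\"older estimates where the paper invokes the Aubin--Lions--Simon lemma on $\int_0^x q_n\,\ud y$ and $\int_0^x \dot q_n\,\ud y$ --- and your treatment of the uniformity in $t$ in items (4)--(5) is if anything more explicit than the paper's.
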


We will need the following result which will be proved in Appendix \ref{Ap:Proofs}.

\begin{proposition}\label{th:Boundedness}
Let $p_n \in L^\infty(D)$ be a bounded sequence and let $p_\infty \in L^\infty(D)$. Consider the solutions $q_n,q_\infty$ given by Proposition \ref{th:Existence} associated respectively with $p_n$ and $p_\infty$ for an initial condition $q(0) \in L^\infty([0,1])$.
\par
Then, if $p_n$ weakly converges in $L^2(D)$ to $p_\infty$ then $q_n$ weakly converges to $q_\infty$ in $H^1([0,1],L^2([0,1]))$.
\end{proposition}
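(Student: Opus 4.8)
The plan is to combine the weak compactness of bounded sets in the Hilbert space $H^1([0,1],L^2([0,1]))$ with the uniqueness part of Theorem~\ref{th:Existence}, via a subsequence argument. First I would establish a uniform a priori bound. The solution map of Theorem~\ref{th:Existence} produces $q_n \in H^1([0,1],L^\infty([0,1]))$, and the estimates in its proof depend only on $\|p_n\|_{L^2([0,1],L^\infty([0,1]))}$ and $\|q(0)\|_{L^\infty([0,1])}$; the former is controlled by $\|p_n\|_{L^\infty(D)}$, which is uniformly bounded by hypothesis, while the latter is fixed. Hence $q_n$ is bounded in $H^1([0,1],L^\infty([0,1]))$, and a fortiori in $H^1([0,1],L^2([0,1]))$. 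By reflexivity, any subsequence of $q_n$ admits a further subsequence $q_{n_k}\rightharpoonup \bar q$ weakly in $H^1([0,1],L^2([0,1]))$.

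The core of the proof is to pass to the limit in $\dot q_{n_k} = \eta(q_{n_k})\,\pi^*_{q_{n_k}}(p_{n_k})$ and to show that $\bar q$ solves \eqref{ProjectedSystem} with forcing $p_\infty$. The left-hand side converges weakly: $\dot q_{n_k}\rightharpoonup \dot{\bar q}$ in $L^2(D)$. For the right-hand side I would invoke Lemma~\ref{th:WeakConvergenceLemma}, which gives $\eta(q_{n_k})\to\eta(\bar q)$ and $\varphi(q_{n_k})\to\varphi(\bar q)$ strongly in $C^0(D)$, and $\pi^*_{q_{n_k}}\to\pi^*_{\bar q}$ strongly for the operator norm on $L^2$. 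The delicate point is the nonlocal projection $\pi^*_{q_{n_k}}(p_{n_k}) = p_{n_k} - \begin{bmatrix} 1 & \varphi(q_{n_k}) \end{bmatrix} H_2^{-1} \begin{bmatrix} \langle p_{n_k}, 1 \rangle_{\eta(q_{n_k})} \\ \langle p_{n_k}, \varphi(q_{n_k}) \rangle_{\eta(q_{n_k})} \end{bmatrix}$, which couples the weakly convergent $p_{n_k}$ with $q_{n_k}$-dependent weights. I would handle each scalar coefficient $\int_0^1 p_{n_k}\,\eta(q_{n_k})\,\ud x$ and $\int_0^1 p_{n_k}\,\varphi(q_{n_k})\,\eta(q_{n_k})\,\ud x$, viewed as functions of $t$, and show it converges weakly in $L^2([0,1])$ to its natural limit: testing against $\psi\in L^2([0,1])$ turns the weight into a factor converging strongly in $L^2(D)$, so the pairing reduces to the weak $L^2(D)$ convergence of $p_{n_k}$. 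Multiplying back by the strongly convergent factor $\varphi(q_{n_k})$ and then by $\eta(q_{n_k})$ keeps every product of weak-times-strong type, so it converges weakly in $L^2(D)$. This yields $\eta(q_{n_k})\pi^*_{q_{n_k}}(p_{n_k})\rightharpoonup \eta(\bar q)\pi^*_{\bar q}(p_\infty)$ in $L^2(D)$, whence $\dot{\bar q}=\eta(\bar q)\pi^*_{\bar q}(p_\infty)$. Since evaluation at $t=0$ is weakly continuous on $H^1([0,1],L^2([0,1]))$ and $q_{n_k}(0)=q(0)$, we get $\bar q(0)=q(0)$, so uniqueness in Theorem~\ref{th:Existence} forces $\bar q = q_\infty$.

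Finally, since the limit $q_\infty$ does not depend on the chosen subsequence and bounded subsets of the separable Hilbert space $H^1([0,1],L^2([0,1]))$ are metrizable for the weak topology, the subsequence principle gives $q_n\rightharpoonup q_\infty$ for the whole sequence. The main obstacle I anticipate is precisely the passage to the limit in the nonlocal term $\pi^*_{q_{n_k}}(p_{n_k})$: one must track carefully which factors converge strongly (the $\eta$- and $\varphi$-weights, through Lemma~\ref{th:WeakConvergenceLemma}) and which only weakly (the momentum $p_{n_k}$), and arrange every product to be of weak-times-strong type, so that weak $L^2(D)$ convergence survives the coupling between space variables introduced by the projection.
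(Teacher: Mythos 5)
Your proposal is correct and follows essentially the same route as the paper's proof: a uniform bound in $H^1([0,1],L^\infty([0,1]))$ from the estimates in the proof of Theorem \ref{th:Existence}, extraction of a weakly convergent subsequence, passage to the limit in the equation via the strong convergence of $\eta(q_n)$ and $\pi^*_{q_n}$ from Lemma \ref{th:WeakConvergenceLemma} paired against the weakly convergent $p_n$, identification of the limit by uniqueness, and the subsequence principle. Your explicit tracking of the weak-times-strong products in the nonlocal projection only fills in details the paper leaves implicit.
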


\section{The Fisher-Rao functional and its main properties}\label{Sec:FRFunctional}
In order to study the minimization problem, we need to present the convex functional that will appear in the relaxation of the initial problem. This functional is well-known in areas such as information geometry and it is a particular example of a positively one-homogeneous convex functional on the space of measures. We collect below the properties needed for our study.

\begin{notation}
Let $\mu,\nu \in \mathcal{M}(D)$ be two measures that satisfy the following inequality
\begin{equation}\label{eq:FirstInequalityFormulation}
 \langle \mu , \partial_t f \rangle^2 \leq 4 \langle \nu , f \rangle  \langle \mu ,  f \rangle \,, 
\end{equation}
for every $f \in \mathcal{D}$ being a positive test function. 
This condition will be denoted by 
\begin{equation}\label{eq:InequalityCondition} | \partial_t \mu |  \leq 2\sqrt{\nu} \sqrt{\mu } \,.\end{equation}
\end{notation}

\begin{remark}
This notation $| \partial_t \mu |  \leq 2\sqrt{\nu} \sqrt{\mu }$ is coherent with the inequality obtained if $\mu$ and $\nu$ were $L^1(D)$ functions. This notation is also coherent with the following formula, under sufficient smoothness assumptions on $\mu$ and $\nu$,
\begin{equation*}
\left(\partial_t \sqrt{\mu}\right)^2 \leq \nu \,.
\end{equation*}
In the following, we rigorously define the meaning of this inequality.
\end{remark}

We now show that this inequality is stable under regularization by convolution.
\begin{lemma}\label{th:RegularisationOfFirstInequality}
Let $\mu,\nu \in \mathcal{M}(D)$ be two measures that satisfy the inequality \eqref{eq:FirstInequalityFormulation}, then
one has 
\begin{equation}\label{eq:InequalityCondition2}
 | \partial_t (\rho \star \mu) |  \leq 2\sqrt{\rho \star \nu} \sqrt{\rho \star \mu } \,,
 \end{equation}
with $\rho$ a smooth positive kernel defined on the whole domain $D$.
\end{lemma}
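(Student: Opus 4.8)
The plan is to deduce the regularized inequality \eqref{eq:InequalityCondition2} directly from the hypothesis \eqref{eq:FirstInequalityFormulation} by choosing, for each evaluation point, a single well-adapted test function: the translate of the kernel itself. Recall that for $z=(t,x)$ the convolutions are $(\rho\star\mu)(z)=\int_D\rho(z-w)\,d\mu(w)$ and $(\rho\star\nu)(z)=\int_D\rho(z-w)\,d\nu(w)$, and that since $\rho\star\mu$ and $\rho\star\nu$ are now smooth functions, the notation $|\partial_t(\rho\star\mu)|\leq 2\sqrt{\rho\star\nu}\sqrt{\rho\star\mu}$ of \eqref{eq:InequalityCondition} amounts to the pointwise estimate $(\partial_t(\rho\star\mu)(z))^2\leq 4\,(\rho\star\nu)(z)\,(\rho\star\mu)(z)$ for every $z$. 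It is this pointwise inequality that I would establish.

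First I would fix $z$ and introduce the test function $f_z(w):=\rho(z-w)$ for $w\in D$. Because $\rho$ is smooth and strictly positive, $f_z$ is a positive smooth function on $D$, hence admissible in \eqref{eq:FirstInequalityFormulation} (up to the harmless technical point discussed below). Evaluating the three pairings against $f_z$ gives at once $\langle\mu,f_z\rangle=(\rho\star\mu)(z)$ and $\langle\nu,f_z\rangle=(\rho\star\nu)(z)$. For the time derivative, if $\partial_t f_z$ denotes the derivative of $f_z$ in the time component $s$ of its argument $w=(s,y)$, the chain rule gives $(\partial_t f_z)(w)=-(\partial_1\rho)(z-w)$, whereas differentiation under the integral sign — legitimate since $\rho\in C^\infty$ and $\mu$ is finite — gives $\partial_t(\rho\star\mu)(z)=\int_D(\partial_1\rho)(z-w)\,d\mu(w)$; comparing the two yields $\langle\mu,\partial_t f_z\rangle=-\partial_t(\rho\star\mu)(z)$. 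Substituting $f=f_z$ into \eqref{eq:FirstInequalityFormulation} and using these identities gives exactly
\[
\bigl(\partial_t(\rho\star\mu)(z)\bigr)^2=\langle\mu,\partial_t f_z\rangle^2\leq 4\,\langle\nu,f_z\rangle\,\langle\mu,f_z\rangle=4\,(\rho\star\nu)(z)\,(\rho\star\mu)(z),
\]
and since $z$ was arbitrary, this is precisely \eqref{eq:InequalityCondition2}. The squaring on the left is what makes the sign from the chain rule irrelevant.

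The only genuinely technical point — the one I would treat with care — is the admissibility of $f_z$ as a test function, since a priori \eqref{eq:FirstInequalityFormulation} is assumed only for $f\in\mathcal{D}$ and the translated kernel need not satisfy whatever boundary conditions are built into $\mathcal{D}$. This is resolved by a routine continuity argument: both sides of \eqref{eq:FirstInequalityFormulation} are continuous in $f$ for the $C^1$-topology, and a strictly positive smooth $f_z$ is a monotone limit of positive elements of $\mathcal{D}$ (equivalently, one first extends the inequality from $\mathcal{D}$ to all positive $f\in C^1(D)$), so the estimate passes to $f_z$. Everything else — the interchange of derivative and integral and the sign bookkeeping in the chain rule — is elementary given the smoothness of $\rho$ and the finiteness of the measures. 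Finally, should the measure-theoretic reading of \eqref{eq:InequalityCondition2} be preferred over the pointwise one, it follows from the pointwise density inequality by a single application of the Cauchy–Schwarz inequality after an integration by parts in time.
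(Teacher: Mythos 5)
Your proof is correct and is essentially the paper's argument: the paper substitutes $f=\check{\rho}\star g$ for $g\in\mathcal{D}$ into \eqref{eq:FirstInequalityFormulation} and then reads off the pointwise inequality from the smoothness of $\rho\star\mu$, $\rho\star\nu$, which is exactly your substitution $f_z=\rho(z-\cdot)$ in the limiting case $g=\delta_z$. The admissibility issue you flag (the translated kernel not being compactly supported) is present in the paper's choice $\check{\rho}\star g$ as well and is treated there with the same degree of informality, so your handling of it is on par with the source.
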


\begin{proof}
We will denote $\check{\rho}$ the adjoint of the convolution with $\rho$ for the $L^2$ scalar product. Then, inequality \eqref{eq:FirstInequalityFormulation} evaluated at $f = \check{\rho} \star g$ for $g \in \mathcal{D}$ gives 
\begin{equation*}
 \langle \rho \star \partial_t \mu , f \rangle^2 \leq 4 \langle \rho \star \nu , f \rangle  \langle \rho \star \mu ,  f \rangle \,.
\end{equation*}
Since $\partial_t (\rho \star \mu) = \rho \star \partial_t \mu$ and all the other terms $\rho \star \nu$ and $\rho \star \mu$ are smooth, inequality \eqref{eq:InequalityCondition2} is valid.
\end{proof}

\begin{definition}\label{def:FR}

Let $r:\R \times \R \mapsto \R_+ \cup \{+ \infty \}$ be the one-homogeneous convex function defined by
\begin{equation}
r(x,y) = 
\begin{cases}
\frac 14 \frac{y^2}{x} \text{ if } x > 0 \\
0 \text{ if } (x,y) = (0,0)\\
+\infty \text{ otherwise.}
\end{cases}
\end{equation}
The Fisher-Rao functional is defined on the product space of measures $\mathcal{M}^2(D)$ by
\begin{equation}\label{Eq:FisherRaoDefinition}
\on{FR}_f( \mu,\nu) \eqdef \int_D r\left(\frac{\ud \mu}{\ud\lambda}, \frac{\ud \nu}{\ud\lambda}\right) f \ud \lambda\,
\end{equation}
where $f \in C^0(D,\R_+^*)$ and $\lambda \in \mathcal{M}(D)$ that dominates $ \mu$ and $\nu$.
\end{definition}

\noindent
\textbf{Comments about the name Fisher-Rao:}\,
In the statistical literature, the Fisher-Rao functional has a slightly different meaning. Indeed, let us consider $\Theta : \R^n  \to \on{Prob}(M)$ be a map from a space of parameters to the space of smooth probability densities on a manifold $M$. On the space of smooth probability densities, one can use the metric $G(\rho)(\delta \rho,\delta \rho) = \int_M  \frac{(\delta \rho)^2}{\rho} \ud \mu$ where $\rho$ is the current density w.r.t. the chosen volume form $\mu$ and $\delta \rho$ is a tangent vector at $\rho$. The pull-back of this metric by $\Theta$ is called the Fisher-Rao metric. Thus, it is a metric on a space of parameters. Sometimes, the metric $G$ directly defined on the tangent space of probability densities is also called the Fisher-Rao metric, see \cite{Bauer:2014aa} for instance. In our case, we consider the same tensor on the space of all densities, that is \textit{without the constraint $\int \delta \rho \ud \mu = 0$}. Since the metric tensor has exactly the same formulation, we chose to keep the name Fisher-Rao metric.

We recall in the proof below the arguments to show that the Fisher-Rao functional is well defined. 
\begin{proof}
First, it is easy to check that $r$ is the Fenchel-Legendre conjugate of the indicator function $\iota_K$ of the convex set
\begin{equation*}
K \eqdef \left\{ (\xi_1,\xi_2) \in \R^2 : \xi_1 + \xi_2^2 \leq 0  \right\}\,.
\end{equation*}
We will write $\iota_K^*(x,y) = r(x,y)$ where $\iota_K^*$ is the Legendre conjugate of $\iota_K$.
We now consider the functional defined on $C_0(D)^2$ by 
\begin{equation}\label{Eq:DefinitionOfH}
H_f(u,v) \eqdef \int_D \iota_K(u/f,v/f) f \ud t \ud x\,
\end{equation}
with values in $\R_+ \cup \{+ \infty \}$. This functional is convex since $\iota_K$ is convex, lower semicontinuous and bounded below.
Now, the Fisher-Rao functional $\on{FR}_f(\mu)$ can be defined as the convex conjugate of $H$ on $\mathcal{M}^2(D)$.
We have
\begin{equation*}
H_f^*(\mu,\nu) = \sup_{u,v \in C^0(D)} \left[ \int_D u \ud \mu + \int_D v \ud \nu - \int_D \iota_K(u/f,v/f) f \ud t \ud x \right]  \,.
\end{equation*}
Using \cite[Theorem 5, page 457]{rockafellar1971integrals}, the following formula holds
\begin{equation*}
H_f^*(\mu,\nu) = \int_D r\left( \frac{\ud \mu}{\ud \lambda_0},\frac{\ud \nu}{\ud \lambda_0}\right)f \ud \lambda_0  +  \int_D r^\infty\left(\frac{\ud \mu^\perp}{\ud \lambda_0},\frac{\ud \nu^\perp}{\ud \lambda_0}\right) f \ud \sigma \,,
\end{equation*}
where $\lambda_0 \eqdef \ud x \ud t$ is the Lebesgue measure on $D$ and $\sigma$ is any measure that dominates the singular parts of $\mu, \nu$ w.r.t. $\lambda_0$, which are denoted by $\mu^\perp$ and $\nu^\perp$.
Here $r^\infty$ denotes the recession function of $r$ and since $r$ is one homogeneous, one has $r^\infty = r$.

Now, by one homogeneity, one can simply use a measure $\lambda$ that dominates $\mu$ and $\nu$ and use the formula
\begin{equation*}
\on{FR}_f(\mu,\nu) = \int_D r \left(\frac{\ud \mu}{\ud \lambda}, \frac{\ud \nu}{\ud\lambda}\right) f \ud \lambda\,.
\end{equation*}
\end{proof}

\begin{proposition}\label{Th:SubDifOfFR}
The subdifferential of the Fisher-Rao functional $\on{FR}_f$ at a point $(\mu,\nu)$ in its domain satisfies
\begin{equation*}
\left\{ (u,v) \in C^0(D)^2 \, : \, H_f(u,v) = 0 \text{ and } \langle u, \mu \rangle +  \langle v, \nu \rangle = \on{FR}_f(\mu,\nu) \right\} \subset \partial \on{FR}_f(\mu,\nu)\,.
\end{equation*}
\end{proposition}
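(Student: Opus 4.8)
The plan is to read off the result directly from the definition of $\on{FR}_f$ as a Fenchel conjugate, reducing the subgradient inequality to a one-line consequence of that definition. Recall from the preceding proof that $\on{FR}_f = H_f^*$, where the conjugate is taken with respect to the duality pairing between $C^0(D)^2$ and $\mathcal{M}^2(D)$, and that $(\mu,\nu)$ is assumed to lie in the domain of $\on{FR}_f$, so all pairings below are finite. By definition, proving $(u,v) \in \partial \on{FR}_f(\mu,\nu)$ (the subgradient being sought in the predual $C^0(D)^2$) amounts to establishing
\[
\on{FR}_f(\mu',\nu') - \on{FR}_f(\mu,\nu) \geq \langle u, \mu' - \mu\rangle + \langle v, \nu' - \nu\rangle
\]
for every $(\mu',\nu') \in \mathcal{M}^2(D)$.

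First I would rewrite the right-hand side using the hypothesis $\langle u,\mu\rangle + \langle v,\nu\rangle = \on{FR}_f(\mu,\nu)$: the constant terms $-\on{FR}_f(\mu,\nu)$ on both sides cancel, so the subgradient inequality is equivalent to
\[
\on{FR}_f(\mu',\nu') \geq \langle u, \mu'\rangle + \langle v, \nu'\rangle \qquad \text{for all } (\mu',\nu') \in \mathcal{M}^2(D).
\]
This is exactly where the second hypothesis is used. Since $\on{FR}_f = H_f^*$ is by definition a supremum over $C^0(D)^2$, evaluating that supremum at the particular admissible pair $(u,v)$ gives
\[
\on{FR}_f(\mu',\nu') \geq \langle u, \mu'\rangle + \langle v, \nu'\rangle - H_f(u,v),
\]
and the term $H_f(u,v)$ vanishes by assumption. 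Combining the last two displays yields the desired inequality, hence $(u,v) \in \partial \on{FR}_f(\mu,\nu)$, which proves the asserted inclusion.

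There is no serious obstacle here: the argument is simply the equality case of the Fenchel--Young inequality, and in particular it sidesteps any appeal to the biconjugate identity $H_f^{**} = H_f$. The only points deserving a word of care are that $(\mu,\nu)$ belongs to the domain so that the finite equality $\langle u,\mu\rangle + \langle v,\nu\rangle = \on{FR}_f(\mu,\nu)$ is meaningful, and that the subdifferential is understood in the duality $(\mathcal{M}^2(D), C^0(D)^2)$, i.e. the subgradients are continuous functions rather than arbitrary elements of the bidual. I would also remark, for the reader's geometric intuition, that since the integrand $\iota_K$ is nonnegative the condition $H_f(u,v) = 0$ means precisely that $(u/f, v/f) \in K$ almost everywhere, equivalently $u + v^2/f \leq 0$ a.e. on $D$; this makes the left-hand family explicit and shows it is genuinely the Fenchel--Young equality set.
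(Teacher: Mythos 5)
Your proof is correct, but it takes a genuinely more elementary route than the paper's. The paper characterizes $(u,v)\in\partial \on{FR}_f(\mu,\nu)$ through the Fenchel--Young equality $\on{FR}_f^*(u,v)+\on{FR}_f(\mu,\nu)=\langle u,\mu\rangle+\langle v,\nu\rangle$, then uses positive one-homogeneity of $\on{FR}_f$ to see that $\on{FR}_f^*$ is $\{0,+\infty\}$-valued, and finally identifies $\on{FR}_f^*=H_f^{**}=H_f$ by the Fenchel--Moreau theorem ($H_f$ being convex, l.s.c.\ and bounded below). You instead verify the subgradient inequality directly from the definition $\on{FR}_f=H_f^*$ as a supremum of affine functionals: the hypothesis $\langle u,\mu\rangle+\langle v,\nu\rangle=\on{FR}_f(\mu,\nu)$ (finite on the domain) reduces the inequality to $\on{FR}_f(\mu',\nu')\geq\langle u,\mu'\rangle+\langle v,\nu'\rangle$, which is exactly the supremum evaluated at the admissible pair $(u,v)$ with $H_f(u,v)=0$. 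This dispenses with both the biconjugation step and the homogeneity argument, as you note. What the paper's route buys in exchange is that its chain of equivalences actually shows the displayed set equals $\partial\on{FR}_f(\mu,\nu)\cap C^0(D)^2$, i.e.\ the inclusion is an equality on continuous subgradients, whereas your argument delivers only the stated inclusion --- which is all the proposition claims and all that is used later. Your closing identification of $H_f(u,v)=0$ with $u+v^2/f\leq 0$ a.e.\ is also correct and matches how the condition is exploited afterwards.
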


\begin{proof}
The condition $(u,v) \in \partial \on{FR}_f(\mu,\nu)$ is known to be equivalent to 
\begin{equation*}
\on{FR}^*_{f}(u,v) + \on{FR}_{f}(\mu,\nu) = \langle \mu,u\rangle + \langle \nu,v\rangle\,.
\end{equation*}
Since $(\mu,\nu) \in \on{Dom}(\on{FR}_f)$, $\on{FR}^*_{f}(u,v)$ has to be finite. By one-homogeneity of $\on{FR}_f$, its Fenchel-Legendre conjugate takes its values in $\{ 0,+\infty \}$ and thus $\on{FR}_{f}^*(u,v)=0$. Therefore, Young's inequality is equivalent to $\on{FR}^*_{f}(u,v)=0$ and $\on{FR}_{f}(\mu,\nu) = \langle \mu,u\rangle + \langle \nu,v\rangle$.
Moreover, we know that $\on{FR}^*_f = H_f$ since $H_f$ is a convex and l.s.c function bounded below by $0$. Therefore, we obtain $H_{f}(u,v) = 0$ and the desired condition.
\end{proof}

\begin{notation}
Let $\mu,\nu,\sigma$ be measures in $\mathcal{M}(D)$. We denote 
\begin{equation}\label{eq:FisherRaoInequalityFormulation}
\on{FR}(\mu,\nu) \preccurlyeq \sigma \,
\end{equation}
if for all $f \in C^0(D,\R_+^*)$ one has,
\begin{equation*}
\on{FR}_f(\mu,\nu) \leq  \langle \sigma , f \rangle \,.
\end{equation*}
\end{notation}

\begin{proposition}\label{Th:ShortFormulation}
Let $\mu,\nu \in \mathcal{M}(D)$ be two Radon measures. Inequalities \eqref{eq:FirstInequalityFormulation} and \eqref{eq:FisherRaoInequalityFormulation} are equivalent. More precisely, we have $$\partial_t \mu \text{ is a measure and } \on{FR}(\mu,\partial_t\mu) \preccurlyeq \nu \, \Leftrightarrow \, \langle \mu , \partial_t f \rangle^2 \leq 4 \langle \nu , f \rangle  \langle \mu ,  f \rangle \text{ for all }f \in   C^0(D,\R_+^*)\,.$$
\end{proposition}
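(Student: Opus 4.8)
The plan is to rewrite the quadratic condition \eqref{eq:FirstInequalityFormulation} as a family of affine inequalities and then exploit the duality representation $\on{FR}_f = H_f^*$ established in the proof of Definition \ref{def:FR}. Observe first that $\langle \mu, \partial_t f\rangle = -\langle \partial_t \mu, f\rangle$, so the left-hand side of \eqref{eq:FirstInequalityFormulation} equals $\langle \partial_t\mu, f\rangle^2$. Setting $a = \langle \mu, f\rangle \geq 0$, $b = \langle \partial_t\mu, f\rangle$, $c = \langle \nu, f\rangle \geq 0$, the discriminant criterion $b^2 \leq 4ac$ is equivalent to the positivity of the quadratic $s \mapsto a s^2 - b s + c$ on all of $\R$. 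Thus \eqref{eq:FirstInequalityFormulation} is equivalent to
\[
\langle \nu, f\rangle \;\geq\; s\,\langle \partial_t\mu, f\rangle - s^2\,\langle \mu, f\rangle \qquad \text{for all } s \in \R \text{ and all positive } f.
\]
This reformulation is the engine of the whole proof.

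For the implication ($\Rightarrow$), I would assume $\partial_t\mu$ is a measure and $\on{FR}(\mu,\partial_t\mu)\preccurlyeq\nu$, fix a dominating measure $\lambda$ for $\mu$, $\partial_t\mu$, $\nu$, and apply Cauchy--Schwarz after splitting the positive weight as $f = \sqrt f \cdot \sqrt f$:
\[
\langle \partial_t\mu, f\rangle^2 = \left(\int_D \tfrac{\rho_{\partial_t\mu}}{\sqrt{\rho_\mu}}\sqrt f \cdot \sqrt{\rho_\mu}\sqrt f \,\ud\lambda\right)^2 \leq \left(\int_D \tfrac{\rho_{\partial_t\mu}^2}{\rho_\mu} f \,\ud\lambda\right)\langle \mu, f\rangle = 4\,\on{FR}_f(\mu,\partial_t\mu)\,\langle \mu, f\rangle \leq 4\,\langle \nu, f\rangle\,\langle \mu, f\rangle,
\]
which is exactly \eqref{eq:FirstInequalityFormulation}. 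This direction is short and purely computational.

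For the implication ($\Leftarrow$), I would proceed in two steps. First, to see that $\partial_t\mu$ is a finite measure, I would apply \eqref{eq:FirstInequalityFormulation} to $f = g + \|g\|_\infty + \ep$ for an arbitrary smooth $g$ and $\ep > 0$; since $\partial_t f = \partial_t g$ and $f \geq \ep > 0$, the bound $\langle\nu,f\rangle \leq (2\|g\|_\infty+\ep)\,\nu(D)$ and its analogue for $\mu$ give $|\langle \mu, \partial_t g\rangle| \leq 2(2\|g\|_\infty+\ep)\sqrt{\mu(D)\,\nu(D)}$. Letting $\ep \to 0$ shows $g \mapsto \langle\mu,\partial_t g\rangle$ is bounded in the sup norm, so by density of smooth functions in $C^0(D)$ and the Riesz representation theorem it is represented by a finite Radon measure, namely $\partial_t\mu$. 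Second, using the dual representation $\on{FR}_f = H_f^*$ with $K = \{\xi_1 + \xi_2^2 \leq 0\}$, the optimal choice $u = -v^2/f$ reduces the supremum to
\[
\on{FR}_f(\mu,\partial_t\mu) = \sup_{v \in C^0(D)} \left[\langle \partial_t\mu, v\rangle - \int_D \tfrac{v^2}{f}\,\ud\mu\right],
\]
so it suffices to bound the bracket by $\langle\nu,f\rangle$ for each fixed $v$.

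The core of that second step — and the step I expect to be the main obstacle — is passing from the affine inequality with \emph{constant} $s$ to the integral bound with a \emph{spatially varying} profile $h := v/f \in C^0(D)$. Here I would use a localization argument: choose a partition of unity $\{g_i\}$ with $g_i \geq 0$, $\sum_i g_i = f$ and small supports, pick $x_i \in \on{supp} g_i$, and set $s_i = h(x_i)$. Summing the affine inequality over $i$ (each $g_i$ being a positive test function, after a routine density extension of \eqref{eq:FirstInequalityFormulation} from $\mathcal{D}$ to $C^0(D,\R_+^*)$ justified by finiteness of the measures) yields $\langle\nu,f\rangle \geq \sum_i s_i\langle\partial_t\mu,g_i\rangle - \sum_i s_i^2\langle\mu,g_i\rangle$. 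By uniform continuity of $h$, as the mesh shrinks one has $\sum_i s_i g_i \to v$ and $\sum_i s_i^2 g_i \to v^2/f$ uniformly; since $\partial_t\mu$ and $\mu$ are finite measures the pairings converge, giving $\langle\nu,f\rangle \geq \langle\partial_t\mu, v\rangle - \int_D v^2/f \,\ud\mu$. Taking the supremum over $v$ produces $\on{FR}_f(\mu,\partial_t\mu) \leq \langle\nu,f\rangle$, i.e. $\on{FR}(\mu,\partial_t\mu) \preccurlyeq \nu$, completing the equivalence.
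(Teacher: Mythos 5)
Your proof is correct, but it follows a genuinely different route from the paper's. The paper proves both implications by regularization: it convolves $\mu$ and $\nu$ with a smooth kernel, invokes Lemma \ref{th:RegularisationOfFirstInequality} to get the pointwise inequality $|\partial_t\sqrt{\rho\star\mu}|^2\leq\rho\star\nu$, and then passes to the limit using lower semicontinuity of $\on{FR}_f$ for the direction \eqref{eq:FirstInequalityFormulation}$\Rightarrow$\eqref{eq:FisherRaoInequalityFormulation}, and Jensen's inequality $\on{FR}_f(\rho\star\mu,\partial_t(\rho\star\mu))\leq\on{FR}_{\rho\star f}(\mu,\partial_t\mu)$ plus Cauchy--Schwarz for the converse. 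You instead linearize the quadratic condition via the discriminant criterion into the family of affine inequalities $\langle\nu,f\rangle\geq s\langle\partial_t\mu,f\rangle-s^2\langle\mu,f\rangle$, exploit the Fenchel-dual representation $\on{FR}_f=H_f^*$ from the proof following Definition \ref{def:FR} (with the optimal choice $u=-v^2/f$), and recover the spatially varying slope $v/f$ from the constant slopes $s$ by a partition-of-unity localization. Your approach buys two things the paper's proof glosses over: it establishes explicitly, via the bound $|\langle\mu,\partial_t g\rangle|\leq 4\|g\|_\infty\sqrt{\mu(D)\nu(D)}$ and Riesz representation, that $\partial_t\mu$ is a finite measure (which the paper's lower-semicontinuity step implicitly presupposes in order for $\on{FR}_f(\mu,\partial_t\mu)$ to be defined), and it avoids the delicacies of convolving measures on the closed square $D$ near its boundary. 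The paper's route is shorter once the regularization lemma and lower semicontinuity are granted. Two small points you should make explicit: in the direction $(\Rightarrow)$, the finiteness $\on{FR}_f(\mu,\partial_t\mu)\leq\langle\nu,f\rangle<\infty$ forces $\rho_{\partial_t\mu}=0$ $\lambda$-a.e.\ on $\{\rho_\mu=0\}$, which is what legitimizes writing $\rho_{\partial_t\mu}/\sqrt{\rho_\mu}$ in your Cauchy--Schwarz step; and in the localization step the partition functions $g_i$ are only non-negative rather than strictly positive, so you do need the density extension you mention (mollify and add $\varepsilon$), which is harmless precisely because by then all three objects $\mu$, $\partial_t\mu$, $\nu$ are known to be finite measures and the affine inequality involves no derivatives of the test function.
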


\begin{proof}
First remark that the equivalence is true if $\mu,\nu$ are smooth functions. We present a proof based on regularization arguments. 
Assume first that the inequality \eqref{eq:FirstInequalityFormulation} is satisfied, then using Lemma \ref{th:RegularisationOfFirstInequality}, one has
\begin{equation*}
 | \partial_t (\rho \star \mu) |  \leq 2\sqrt{\rho \star \nu} \sqrt{\rho \star \mu } \,,
 \end{equation*}
 for a kernel on the domain $D$. It implies 
 \begin{equation*}
 | \partial_t \sqrt{\rho \star \mu} |^2 \leq \rho \star \nu\,,
 \end{equation*}
 and therefore, for every $f\in C^0(D,\R_+^*)$,
  \begin{equation*}
\on{FR}_f(\rho \star \mu,\partial_t (\rho \star \mu)) \leq  \int_D \rho \star \nu \, f \ud x \ud t\,.
 \end{equation*}
 Let us introduce a sequence $\rho_n$ converging to the Dirac measure. The weak lower semicontinuity of the Fisher-Rao functional leads to
 \begin{equation*}
\on{FR}_f(\mu,\partial_t\mu) \leq \liminf_{n \to \infty} \on{FR}_f(\rho_n \star \mu,\partial_t (\rho_n \star \mu)) \leq \int_D  \nu \, f \ud x \ud t\,,
 \end{equation*}
 which is the Fisher-Rao inequality \eqref{eq:FisherRaoInequalityFormulation}.
 
 Now, let us assume that the Fisher-Rao inequality \eqref{eq:FisherRaoInequalityFormulation} is satisfied. Note first that it is sufficient to prove the inequality in Lemma \eqref{th:RegularisationOfFirstInequality} for test functions that satisfy $\int_D f \ud x \ud t = 1$ since the inequality is one homogeneous with respect to $f$. Using the Jensen inequality on the Fisher-Rao functional, we obtain
 \begin{equation}\label{eq:IneqIntermediaire43}
 \on{FR}_f(\rho \star \mu,\partial_t (\rho \star \mu)) \leq  \on{FR}_{\rho \star f}(\mu,\partial_t \mu) \leq \langle \rho \star \nu,f \rangle\,.
 \end{equation}
 Then, using the Cauchy-Schwarz inequality, one has
 \begin{equation*}
 | \langle f, \partial_t (\rho \star \mu) \rangle| = 2 | \langle f, (\partial_t \sqrt{\rho \star \mu}) \rho \star \mu \rangle| \leq 2\on{FR}_f(\rho \star \mu,\partial_t (\rho \star \mu))^{1/2} \langle f , \rho \star \mu \rangle^{1/2}\,,
 \end{equation*}
 which implies, together with \eqref{eq:IneqIntermediaire43},
  \begin{equation*}
 | \langle f, \partial_t (\rho \star \mu) \rangle| \leq 2\langle\rho \star \nu,f \rangle^{1/2} \langle f , \rho \star \mu \rangle^{1/2}\,.
 \end{equation*}
 Applying this inequality with $\rho = \rho_n$ and taking the limit, one gets inequality \eqref{th:RegularisationOfFirstInequality}.
 \end{proof}

\begin{notation}
Due to Proposition \ref{Th:ShortFormulation}, we now write $\on{FR}(\mu,\partial_t\mu) \preccurlyeq \nu  $ in replacement of the condition $\langle \mu , \partial_t f \rangle^2 \leq 4 \langle \nu , f \rangle  \langle \mu ,  f \rangle$, therefore omitting to precise that $\partial_t \mu$ is in $\mathcal{M}(D)$.
\end{notation}

We end up this section with an important regularity result that will be necessary for the formulation of the first-order optimality condition. 

\begin{definition}
Let $\alpha \in (0,1)$. 
The space $C^{0,\alpha}([0,1],\mathcal{M})$ is the space of H\"older continuous paths of measures endowed with the bounded Lipschitz distance. It is defined by 
\begin{multline}
C^{0,\alpha}([0,1],\mathcal{M}) = \{ \mu \in L^1([0,1],\mathcal{M}) \, : \, \exists \, M>0 \,\text{ s.t. }  \, \| \mu(t)-\mu(s)\|_{BL} \leq M |t-s|^{\alpha} \}\,,
\end{multline}
where $\| \mu(t) - \mu(s) \|_{BL} \eqdef \sup \, \{\langle f , \mu(t) \rangle - \langle f , \mu(s) \rangle \, ; \, \| f\|_{\infty} \leq 1 \text{ and } \on{Lip}(f) \leq 1 \}$.
\end{definition}
The bounded Lipschitz distance metrizes the weak convergence on bounded sets of the space of Radon measures. The result hereafter proves that we will deal with paths of measures on the space $[0,1]$ instead than measures on $D$. This distance is weaker than the dual norm.

\begin{proposition}\label{Th:TimeRegularity}
Let $\mu,\partial_t \mu$ be Radon measures on $D$ such that $\on{FR}_f(\mu,\partial_t \mu) < \infty$ for some function $f \in C^0(D,\R_+^*)$. Then, $\mu \in C^{0,1/2}([0,1],\mathcal{M}_+([0,1]))$. 
\end{proposition}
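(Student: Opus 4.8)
The plan is to reduce the Hölder estimate to a fundamental-theorem-of-calculus identity for the time-disintegration of $\mu$, in which the exponent $1/2$ emerges from a single Cauchy--Schwarz step fueled by the finiteness of $\on{FR}_f$. First I would record two consequences of the hypotheses. Since $\on{FR}_f(\mu,\partial_t\mu)<\infty$ and the integrand $r$ equals $+\infty$ on $\{\rho_\mu=0,\ \rho_\nu\neq0\}$, the measure $\nu\eqdef\partial_t\mu$ is absolutely continuous with respect to $\mu$; writing $h\eqdef\ud\nu/\ud\mu$ and choosing $\lambda=\mu$ as dominating measure, finiteness reads $\int_D \tfrac14 h^2 f\,\ud\mu<\infty$. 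As $f$ is continuous and strictly positive on the compact $D$ we have $f\ge c>0$, hence $h\in L^2(\mu)$ with $\int_D h^2\,\ud\mu\le \tfrac4c\,\on{FR}_f(\mu,\partial_t\mu)\eqdef C_1$.

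Second, I would show that the time marginal $m\eqdef \pi_{t\,\#}\mu$ is absolutely continuous with bounded density. Testing $\langle\partial_t\mu,\phi\rangle=-\langle\mu,\partial_t\phi\rangle$ against functions $\phi(\tau,x)=\chi(\tau)$ shows that the distributional derivative of $m$ is the finite measure $\bar\nu\eqdef \pi_{t\,\#}(h\mu)$, with $|\bar\nu|(A)\le\int_{A\times[0,1]}|h|\,\ud\mu$. A distribution whose derivative is a finite measure is a function of bounded variation; concretely $m=g\,\ud t$ with $g(t)=\bar\nu([0,t])+\mathrm{const}$, so $g$ is nonnegative, bounded and BV, whence $m((s,t))=\int_s^t g\le\|g\|_\infty|t-s|$. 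Disintegrating $\mu$ over $\pi_t$ against $m$ then produces a path $t\mapsto\mu(t)\in\mathcal{M}_+([0,1])$, defined for a.e.\ $t$, with $\langle\mu,\phi\rangle=\int_0^1\langle\mu(t),\phi(t,\cdot)\rangle\,\ud t$ and total mass $\mu(t)([0,1])=g(t)$.

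The core estimate comes next. Fix a test function $\phi$ on $[0,1]$ with $\|\phi\|_\infty\le1$ and $\on{Lip}(\phi)\le1$, and set $u(\tau)\eqdef\langle\mu(\tau),\phi\rangle$. Testing $\nu=\partial_t\mu$ against $\chi(\tau)\phi(x)$ and approximating $\one_{(s,t)}$ by such $\chi$ yields $u'=w$ distributionally, where $w(\tau)=\langle \phi\, h(\tau,\cdot),\mu(\tau)\rangle\in L^1$; hence $u$ is absolutely continuous and
\begin{equation*}
\langle\mu(t)-\mu(s),\phi\rangle=\int_{(s,t)\times[0,1]}\phi\,h\,\ud\mu\,.
\end{equation*}
Using $\|\phi\|_\infty\le1$, Cauchy--Schwarz, and the two bounds above,
\begin{equation*}
|\langle\mu(t)-\mu(s),\phi\rangle|\le\int_{(s,t)\times[0,1]}|h|\,\ud\mu\le\Big(\int_{(s,t)\times[0,1]}h^2\,\ud\mu\Big)^{1/2}\mu((s,t)\times[0,1])^{1/2}\le C_1^{1/2}\|g\|_\infty^{1/2}|t-s|^{1/2}\,.
\end{equation*}
Taking the supremum over admissible $\phi$ bounds $\|\mu(t)-\mu(s)\|_{BL}$ by $M|t-s|^{1/2}$ for a.e.\ $s,t$.

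Finally, since the estimate is uniform in $\phi$ and holds for a.e.\ pair $(s,t)$, the a.e.-defined path is uniformly continuous for the bounded Lipschitz distance on a set of full measure, and thus extends uniquely to a $1/2$-Hölder continuous path on all of $[0,1]$ representing $\mu$, giving $\mu\in C^{0,1/2}([0,1],\mathcal{M}_+([0,1]))$. The main obstacle I anticipate is the rigorous justification of the disintegration together with the two fundamental-theorem-of-calculus identities — in particular exchanging the limit $\chi\to\one_{(s,t)}$ with the pairings, and producing a genuinely (not merely almost everywhere) continuous representative. By contrast the quantitative heart is the single Cauchy--Schwarz step: this is exactly where the square-root structure of $\on{FR}$ converts the $L^2(\mu)$ control on $h$ and the $O(|t-s|)$ mass of the time-slabs $(s,t)\times[0,1]$ into the exponent $1/2$, matching the informal bound $(\partial_t\sqrt{\mu})^2\le\nu$.
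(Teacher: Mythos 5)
Your proof is correct, but it takes a genuinely different route from the paper's. The paper never invokes the Radon--Nikodym structure of $\partial_t\mu$ with respect to $\mu$: it first proves the estimate for nonnegative $\mu\in C^0([0,1],L^1)$ by factoring $\mu(t)-\mu(s)=(\sqrt{\mu(t)}-\sqrt{\mu(s)})(\sqrt{\mu(t)}+\sqrt{\mu(s)})$, bounding $\sup_\tau\|\sqrt{\mu(\tau)}\|_{L^2}$ and $\int_s^t\|\partial_\tau\sqrt{\mu}\|_{L^2}$ by Cauchy--Schwarz against $\on{FR}_f$, and then handles general measures by mollification, using the Jensen-type inequality $\on{FR}_f(\rho_n\star\mu,\partial_t(\rho_n\star\mu))\le\on{FR}_{\rho_n\star f}(\mu,\partial_t\mu)$ to get a uniform H\"older constant and Arzel\`a--Ascoli to pass to the limit. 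You instead extract directly from the finiteness of $\on{FR}_f$ (via the $+\infty$ branch of $r$) that $\partial_t\mu\ll\mu$ with density $h\in L^2(\mu)$, show the time marginal has bounded density so that slabs $(s,t)\times[0,1]$ have mass $O(|t-s|)$, disintegrate, and conclude with a single Cauchy--Schwarz. Your route avoids mollification and compactness arguments entirely, makes the constant explicit in one line, and isolates the measure-theoretic content of $(\partial_t\sqrt{\mu})^2\le\nu$ as the statement $h\in L^2(\mu)$; the paper's route avoids disintegration and the a.e.-representative bookkeeping by working with smooth objects throughout. The one point you should spell out (and correctly flag as the delicate step) is the upgrade from ``a.e.\ pair $(s,t)$'' to a genuine H\"older path: since the exceptional null set a priori depends on the test function $\phi$, you need to run the fundamental-theorem-of-calculus identity over a countable family of Lipschitz functions that is dense for the uniform norm in the BL unit ball, obtain a common full-measure set of times, and only then extend by completeness of $(\mathcal{M}_+([0,1]),\|\cdot\|_{BL})$ on bounded sets; this is standard and does not affect the validity of the argument.
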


\begin{proof}
Since $f\in C^0(D,\R_+^*)$ is bounded below by a positive constant, it is sufficient to prove the result in the case where $f\equiv 1$.
We present a proof by regularization. Let us first study the case where $\mu \in C^0([0,1],L^1([0,1]))$ and in addition $\mu(t) \geq 0$ for all $t\in [0,1]$. We first note that

\begin{align}
\| \sqrt{\mu}(s) \|_{L^2} &\leq \| \sqrt{\mu}(0) \|_{L^2} + \int_0^s \| \partial_t \mu \|_{L^2} \ud t \nonumber\\ 
& \leq \mu(0)([0,1]) + \sqrt{s} \sqrt{ \on{FR}_f(\mu,\partial_t \mu)}\label{Eq:IneqBoundedL2}
\end{align}
by application of the Cauchy-Schwarz inequality. This shows that $\sqrt{\mu}$ is bounded in $L^2$. It also implies
\begin{equation}\label{Eq:GlobalIneq}
\mu(0)([0,1])  \leq \mu(D) + \sqrt{ \on{FR}_f(\mu,\partial_t \mu)}\,.
\end{equation}
\\
We have, by standard estimations, recalling that $\mu \geq 0$,
\begin{align*}
|\langle \mu(t) - \mu(s) , \psi \rangle | & \leq \| \mu(t) - \mu(s)  \|_{L^1}\| \psi \|_\infty\\
& \leq \langle |\sqrt{\mu(t)} -\sqrt{\mu(s)}|,  |\sqrt{\mu(t)} +\sqrt{\mu(s)|} \rangle \| \psi \|_\infty\\
& \leq \| \sqrt{\mu(t)} +\sqrt{\mu(s)} \|_{L^2} \left\| \int_s^t \partial_t \mu(\tau) \ud \tau \right\|_{L^2}  \| \psi \|_\infty\\
& \leq 2 \left(\sup_{\tau \in [0,1]} \| \sqrt{\mu(\tau)} \|_{L^2} \right)\int_s^t \| \partial_t \mu(\tau) \|_{L^2} \ud \tau   \| \psi \|_\infty\\
& \leq 2 \left(\sup_{\tau \in [0,1]} \| \sqrt{\mu(\tau)} \|_{L^2} \right) \sqrt{|t-s|} \sqrt{ \on{FR}_f(\mu,\partial_t \mu)}   \| \psi \|_\infty\,,
\end{align*}
where the last inequality again comes from the application of the Cauchy-Schwarz inequality.
\\
We now prove the result by density with $\rho_{n}$ a convolution kernel on the domain $D$ converging to the Dirac measure. 
The sequence $\rho_{n} \star \mu$ belongs to $C^{0,1/2}([0,1],\mathcal{M}([0,1]))$ with a H\"older  constant bounded by $\sup_{n \in \N}  \on{FR}_f(\rho_{n} \star \mu,\partial_t (\rho_{n} \star \mu)) $. Since, for $f \equiv 1$ we have $\rho_n \star f = f$, we have $$\on{FR}_f(\rho_{n} \star \mu,\partial_t (\rho_{n} \star \mu)) \leq \on{FR}_{\rho_{n} \star f} ( \mu,\partial_t \mu)) =  \on{FR}_{f}( \mu,\partial_t \mu))\,,$$
which implies that the H\"older constant is bounded uniformly. Therefore, by the Arzela-Ascoli theorem which can be applied here since bounded sets for the dual norm are compact in $(\mathcal{M}_+([0,1]), \| \cdot \|_{BL})$, the limit is also in $C^{0,1/2}([0,1],\mathcal{M}_+([0,1]))$.

We now give an explicit estimation of the Hölder constant. We have
\begin{multline*}
| \langle \mu(t) - \mu(s) , \psi \rangle |  \leq \| \rho_n \star \mu(t) -  \rho_n \star \mu(s)  \|_{L^1}\| \psi \|_\infty \\+ | \langle \rho_n \star \mu(s) -\mu(s) ,\psi \rangle | + | \langle \rho_n \star \mu(t) -\mu(t) ,\psi \rangle |\,.
\end{multline*}
The two last terms can be made arbitrarily small with $n \to \infty$ and the first term can be bounded using the previous case since $\rho_n \star \mu \in C^0([0,1],L^1([0,1]))$. We thus apply the previous inequalities to get
\begin{align*}
| \langle \mu(t) - \mu(s) , \psi \rangle |  \leq 2  \left(\lim_{n \to \infty}\sup_{\tau \in [0,1]} \| \sqrt{\rho_n \star\mu(\tau)} \|_{L^2} \right) \sqrt{|t-s|} \sqrt{ \on{FR}_f(\mu,\partial_t \mu)} \| \psi \|_\infty \,.
\end{align*}
Last, we have using inequality \eqref{Eq:IneqBoundedL2}, a bound on the first term of the r.h.s.
\begin{equation*}
\lim_{n \to \infty}\sup_{\tau \in [0,1]} \| \sqrt{\rho_n \star\mu(\tau)} \|_{L^2}   \leq \mu(D) + \sqrt{ \on{FR}_f(\mu,\partial_t \mu)}\,.
\end{equation*}
Indeed, we have that $\lim_{n \to \infty} \rho_{n} \star \mu (D) =  \mu(D)$. 
\end{proof}

\begin{remark}
We have actually proved that the path $\mu(t)$ is Hölder with respect to the dual norm on the space of measures with a constant which is explicit in terms of $\mu(0)([0,1])$ and $\on{FR}_f(\mu,\partial_t \mu)$.
\end{remark}
\section{Minimization of the acceleration}\label{Sec:Relaxation}

In this section, we are interested in the minimization of the acceleration on $(Q,G)$ which represents the group of diffeomorphisms $\on{Diff}_0^2([0,1])$. We first need that the acceleration functional is coercive which is the consequence of a general result on Hilbert manifolds and of the right-invariance of the metric. Then, we study this variational problem in the particular case of $(Q,G)$.

\subsection{A general lemma and its corollaries on the group of diffeomorphisms}

\begin{notation}
Let $(M,g)$ be a Hilbert manifold and $x(t) \in H^2([0,T],M)$.
We will denote by
$\frac{D}{Dt}\dot{x}$ the covariant derivative of $\dot{x}$ along $x(t)$.
\end{notation}

\begin{lemma}\label{th:CSonAcceleration}
Let $(M,g)$ be a Hilbert manifold and $x(t) \in H^2([0,T],M)$ such that $\dot{x}(0)=0$. Then, one has
\begin{equation}\label{eq:CauchySchwarzAcceleration}
\int_0^T g(\dot{x}, \dot{x}) \ud s \leq 4T^2 \int_0^T g\left(\frac{D}{Dt}\dot{x}, \frac{D}{Dt}\dot{x}\right) \ud s\,.
\end{equation}
\end{lemma}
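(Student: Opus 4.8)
The plan is to use parallel transport along the curve $x$ to convert the intrinsic inequality into an elementary Poincar\'e (Wirtinger) inequality for a Hilbert-space-valued function vanishing at $t=0$. Since we are in one dimension in time, the Sobolev embedding $H^2([0,T]) \hookrightarrow C^1([0,T])$ guarantees that $x$ is $C^1$ and that $v \eqdef \dot{x}$ is a continuous vector field along $x$ with $v \in H^1$, so that $\frac{D}{Dt}\dot{x} \in L^2$ and both sides of \eqref{eq:CauchySchwarzAcceleration} are finite. Let $P_t : T_{x(0)}M \to T_{x(t)}M$ denote parallel transport along $x$ for the Levi-Civita connection; it is well defined as the solution of the linear ODE $\frac{D}{Dt}(P_t u)=0$, $P_0 u = u$, and exists on Hilbert manifolds by \cite{Lang1999}.

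First I would set $w(t) \eqdef P_t^{-1} v(t) \in T_{x(0)}M$. By metric compatibility of the connection, $\frac{d}{dt} g(P_t u, P_t u') = 0$ for parallel fields, so $P_t$ is a linear isometry onto its image; hence $g(v(t),v(t)) = \| w(t) \|^2$, where $\|\cdot\|$ is the norm of the fixed Hilbert space $T_{x(0)}M$. Moreover, writing $v(t) = P_t w(t)$ and using that parallel fields have vanishing covariant derivative, the covariant derivative acts as the ordinary derivative in a parallel frame, which gives $\frac{D}{Dt} v(t) = P_t \dot{w}(t)$ and therefore $g\!\left(\frac{D}{Dt}\dot{x}, \frac{D}{Dt}\dot{x}\right) = \| \dot{w}(t) \|^2$. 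The hypothesis $\dot{x}(0)=0$ becomes $w(0)=0$, and $w \in H^1([0,T],T_{x(0)}M)$ since $v\in H^1$ and $P_t$ is smooth in $t$.

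With these identities, inequality \eqref{eq:CauchySchwarzAcceleration} reduces to the scalar statement $\int_0^T \|w\|^2 \ud t \leq 4T^2 \int_0^T \|\dot{w}\|^2 \ud t$ for $w$ with $w(0)=0$. This I would prove directly: from $w(t) = \int_0^t \dot{w}(s)\ud s$ and the Cauchy-Schwarz inequality one gets $\|w(t)\|^2 \leq t \int_0^t \|\dot{w}(s)\|^2 \ud s \leq T \int_0^T \|\dot{w}(s)\|^2 \ud s$, and integrating over $[0,T]$ yields the bound with constant $T^2 \leq 4T^2$. The factor $4$ thus leaves ample room; in fact the sharp constant here is $4T^2/\pi^2$, so the stated inequality holds comfortably.

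The only delicate point is the infinite-dimensional justification of parallel transport together with the two structural identities $g(v,v)=\|w\|^2$ and $\frac{D}{Dt}v = P_t\dot w$. On a Hilbert manifold these follow from the existence and smoothness theory of the Levi-Civita connection and of its transport flow, combined with metric compatibility, exactly as developed in \cite{Lang1999}; once they are in place the remainder is the elementary Wirtinger estimate above. I expect this transport step to be the main obstacle, whereas the Poincar\'e inequality itself is routine.
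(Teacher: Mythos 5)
Your proof is correct, but it takes a genuinely different route from the paper's. The paper never introduces parallel transport: it starts from the metric-compatibility identity $\tfrac 12 \tfrac{\ud}{\ud t} g(\dot{x},\dot{x}) = g\left(\tfrac{D}{Dt}\dot{x},\dot{x}\right)$, applies the Cauchy--Schwarz inequality pointwise and then again on the time integral, and finally divides by $\bigl(\int_0^T g(\dot{x},\dot{x})\,\ud s\bigr)^{1/2}$ to obtain the constant $4T^2$; the only geometric input is the compatibility of the connection with the metric, so nothing beyond the smoothness of $G$ is needed. Your argument instead trivializes the tangent bundle along the curve by parallel transport, converts both integrands into norms of a fixed Hilbert space, and reduces the statement to the flat Wirtinger inequality for $w$ with $w(0)=0$. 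What this buys you is a sharper constant ($T^2$, indeed $4T^2/\pi^2$ is optimal, versus the paper's $4T^2$) and a conceptually transparent reduction; what it costs is precisely the "delicate point" you flag, namely the existence, isometry, and $H^1$-regularity of parallel transport along an $H^2$ curve in a Hilbert manifold. That step is legitimate (the linear transport ODE has continuous coefficients since $\dot{x}\in C^0$ by the Sobolev embedding, and the theory is in Lang), but it is machinery the paper's two-line Gronwall-type computation avoids entirely. Both proofs are valid; the paper's is the more economical for the stated (non-sharp) constant.
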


\begin{proof}
We start with $\frac 12 \frac{\ud }{\ud t} g(\dot{x},\dot{x}) = g\left(\frac{D}{Dt}\dot{x}, \dot{x}\right) $ by definition of the covariant derivative.
By applications of the Cauchy-Schwarz inequality on the previous formula and time integration, we have
\begin{equation*}
\frac 12 \frac{\ud }{\ud t} g(\dot{x},\dot{x}) \leq \sqrt{g(\dot{x},\dot{x})}\sqrt{g\left(\frac{D}{Dt}\dot{x},\frac{D}{Dt}\dot{x}\right)}\,,
\end{equation*}
which implies that for all $s \leq t$
\begin{align*} 
&\frac 12 g(\dot{x},\dot{x})(s) \leq \int_0^t \sqrt{g(\dot{x},\dot{x})}\sqrt{g\left(\frac{D}{Dt}\dot{x},\frac{D}{Dt}\dot{x}\right)} \ud s \,,\\
&\frac 12 g(\dot{x},\dot{x})(s) \leq \sqrt{\int_0^t g(\dot{x},\dot{x}) \ud s} \sqrt{\int_0^t g\left(\frac{D}{Dt}\dot{x},\frac{D}{Dt}\dot{x}\right) \ud s}\,.
\end{align*}
By integration, we get
\begin{equation}
\frac 12 \int_0^T g\left(\dot{x},\dot{x}\right) \ud s \leq T \sqrt{\int_0^T g(\dot{x},\dot{x}) \ud s} \sqrt{\int_0^T g\left(\frac{D}{Dt}\dot{x},\frac{D}{Dt}\dot{x}\right) \ud s}
\end{equation}
which gives the result.
\end{proof}

We now use this result on the group of diffeomorphisms $\on{Diff}^s(N)$ where $N$ is a compact manifold and the metric on the group is a right-invariant Sobolev metric of the same order $s$.
\begin{definition}\label{Def:AccelerationFunctional}
Let $(M,g)$ be a Hilbert manifold and $\mathcal{C}$ be the set of paths defined by
\begin{equation*} 
\mathcal{C}^{0,1} \, = \{ x(t) \in \! H^2([0,T],M) :  (x(0),\dot{x}(0)) = (x_0,v_0) \in TM \! \text{ and } (x(1),\dot{x}(1)) = (x_1,v_1) \}\,.
\end{equation*}
The acceleration functional $\mathcal{J}$ is defined by 
\begin{equation}\label{eq:AccelerationFunctional}
\mathcal{J}(x) \eqdef \int_0^T g\left(\frac{D}{Dt}\dot{x}, \frac{D}{Dt}\dot{x}\right) \ud s\,,
\end{equation}
subject to the boundary constraints $(x(0),\dot{x}(0)) = (x_0,v_0) \in TM$ and $(x(1),\dot{x}(1)) = (x_1,v_1) \in TM$.
\par 
We also define the set of unconstrained path at time $1$  
\begin{equation*} 
\mathcal{C}^{0} \, = \{ x(t) \in \! H^2([0,T],M) :  (x(0),\dot{x}(0)) = (x_0,v_0) \in TM \}\,.
\end{equation*}
\end{definition}

\begin{theorem}\label{th:AccelerationCoercive}
Let $(N,g)$ be a compact manifold.
On $\on{Diff}^s(N)$, the acceleration functional \eqref{eq:AccelerationFunctional} is coercive on $\mathcal{C}$ endowed with the topology of $H^2([0,1],H^s(N))$.
\end{theorem}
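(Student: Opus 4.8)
The plan is to show that the sublevel sets of $\mathcal{J}$ are bounded in $H^2([0,1],H^s(N))$, the boundary data $(x_0,v_0),(x_1,v_1)$ being fixed; equivalently, I would produce a nondecreasing function $\Phi$ with $\|x\|_{H^2([0,1],H^s(N))}\le \Phi(\mathcal{J}(x))$ for every $x\in\mathcal{C}$. The strategy is in two stages: first control the Riemannian speed uniformly in time using an argument in the spirit of Lemma \ref{th:CSonAcceleration}, then transfer this purely metric information into genuine Sobolev bounds by exploiting right-invariance and the completeness theorem of \cite{BruverisVialard}.

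For the first stage I would redo the computation of Lemma \ref{th:CSonAcceleration} without discarding the initial term, since here $\dot{x}(0)=v_0$ need not vanish. Writing $e(t)\eqdef g_{x(t)}(\dot{x}(t),\dot{x}(t))$ and $a(t)\eqdef g_{x(t)}\!\left(\frac{D}{Dt}\dot{x},\frac{D}{Dt}\dot{x}\right)$, the identity $\frac12 e'=g\!\left(\frac{D}{Dt}\dot{x},\dot{x}\right)$ together with Cauchy--Schwarz gives $\big|\frac{d}{dt}\sqrt{e}\big|\le \sqrt{a}$, hence after integration $\sup_{t\in[0,1]}\sqrt{e(t)}\le \sqrt{e(0)}+\sqrt{\mathcal{J}(x)}$. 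The point is that $e(0)=g_{x_0}(v_0,v_0)=\|v_0\circ x_0^{-1}\|_{H^s}^2$ is a constant fixed by the boundary data, so $\sup_t e(t)$, and in particular $\int_0^1 e\,\ud t$, is bounded by a function of $\mathcal{J}(x)$ alone. Consequently the Eulerian velocity $v(t)\eqdef \dot{x}(t)\circ x(t)^{-1}$ is uniformly bounded in $H^s(N)$, and the Riemannian length $\int_0^1\sqrt{e}\,\ud t$ is bounded, so that $\sup_t d(x_0,x(t))\le L(\mathcal{J}(x))$, i.e. the whole path stays in a fixed metric ball around $x_0$.

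The second stage converts these metric bounds into $H^s$ bounds, and this is where I would invoke \cite{BruverisVialard}: since for $s>\dim N/2+1$ the metric is strong and complete in the sense of Hopf--Rinow, bounded metric balls are bounded in $H^s(N)$ and stay uniformly inside $\on{Diff}^s(N)$, with Jacobians bounded below away from $0$. Hence $\sup_t\|x(t)\|_{H^s}\le C(\mathcal{J}(x))$, which controls $\int_0^1\|x(t)\|_{H^s}^2\,\ud t$. On such a bounded, nondegenerate set of diffeomorphisms composition is bounded on bounded sets, so $\dot{x}(t)=v(t)\circ x(t)$ is bounded in $H^s$ from the bound on $v$, giving control of $\int_0^1\|\dot{x}\|_{H^s}^2\,\ud t$. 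For the second time derivative I would use $\ddot{x}=\frac{D}{Dt}\dot{x}-\Gamma_{x}(\dot{x},\dot{x})$: the covariant term satisfies $\|\frac{D}{Dt}\dot{x}\circ x^{-1}\|_{H^s}^2=a(t)$ with $\int_0^1 a\,\ud t=\mathcal{J}(x)$, so composition bounds its $H^s$ norm, while the Christoffel term is controlled because smoothness of the metric makes $\Gamma$ a bilinear map bounded on bounded sets and $\dot{x}$ is already bounded. Summing these three contributions yields the desired $\Phi$.

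The main obstacle is the transfer step: a bound on the Riemannian distance is a priori much weaker than a Sobolev bound, and passing from one to the other genuinely requires the strong-metric and completeness properties of \cite{BruverisVialard}, in particular that metric balls do not approach the boundary of $\on{Diff}^s(N)$. Once this is granted, the remaining composition and Christoffel-symbol estimates are routine consequences of the smoothness of the metric and the algebra property of $H^s$ for $s>\dim N/2+1$. A minor technical point to watch is the possible vanishing of $e(t)$ when differentiating $\sqrt{e}$, which is handled exactly as in the proof of Lemma \ref{th:CSonAcceleration}.
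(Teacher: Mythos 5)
Your proposal is correct and follows essentially the same route as the paper's proof: bound the kinetic energy by the acceleration functional via the Cauchy--Schwarz argument of Lemma \ref{th:CSonAcceleration}, use the completeness results of \cite{BruverisVialard} to turn metric-ball bounds into $H^s$ bounds, and control $\ddot{x}$ through the decomposition $\frac{D}{Dt}\dot{x}=\ddot{x}+\Gamma(x)(\dot{x},\dot{x})$ with the Christoffel symbols bounded on metrically bounded balls. Your write-up simply fills in details the paper leaves implicit (the nonvanishing initial velocity and the composition estimates), which is a faithful elaboration rather than a different argument.
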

\begin{proof}
The geodesic energy $\int_0^T g(\dot{x}, \dot{x})$ is bounded above by the functional \eqref{eq:AccelerationFunctional}. 
We now use the fact that the topology on the $\on{Diff}^s(N)$ is stronger than that of $H^2([0,1],H^s(N))$ (see \cite{BruverisVialard}). In coordinates, the acceleration can be written as 
\begin{equation*}
\frac{D}{Dt}\dot{x}= \ddot{x} + \Gamma(x)(\dot{x},\dot{x})\,,
\end{equation*}
where $\Gamma$ denotes the Christoffel symbols associated with the right-invariant metric (see for instance \cite[Section 3.2]{HOSplines2} for the finite dimensional case of a Lie group with right-invariant metric). By smoothness of the metric, the Christoffel symbols are thus bounded on a neighborhood of identity. Then, by right invariance of the metric, the Christoffel symbols is a bounded bilinear operator on every metrically bounded ball. 
Therefore, $x$ is bounded in $H^2([0,1],H^s(N))$.

\end{proof}

\begin{corollary}
Let $(N,g)$ be a compact manifold.
On $\on{Diff}^s(N)$, any minimizing sequence for the acceleration functional \eqref{eq:AccelerationFunctional} is
\begin{enumerate}
\item bounded in $H^2([0,1],H^s(N))$,
\item bounded in $C^1([0,1],\on{Diff}^s(N))$.
\end{enumerate}
\end{corollary}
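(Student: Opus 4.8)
The plan is to read both assertions directly off the coercivity statement of Theorem~\ref{th:AccelerationCoercive}, upgrading the time regularity by a one-dimensional Sobolev embedding for the second item; the analytic content is entirely contained in that theorem, so what remains is essentially bookkeeping.

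First I would establish item~(1). Let $(x_n)$ be a minimizing sequence for $\mc{J}$ in $\mc{C}$. Since $\mc{J}(x_n)$ converges to $\inf_{\mc C}\mc{J}$ (which is finite, as $\mc{J}\geq 0$), the sequence of values $(\mc{J}(x_n))_n$ is bounded, say by a constant $C$. Theorem~\ref{th:AccelerationCoercive} states precisely that $\mc{J}$ is coercive for the $H^2([0,1],H^s(N))$ topology, that is, its sublevel sets $\{\mc{J}\leq C\}$ are bounded in that space. Applying this to the bound $\mc{J}(x_n)\leq C$ yields at once that $(x_n)$ is bounded in $H^2([0,1],H^s(N))$, which is item~(1).

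Next I would deduce item~(2) from item~(1) by a purely time-wise argument. The key fact is the continuous vector-valued Sobolev embedding $H^2([0,1],B)\hookrightarrow C^1([0,1],B)$, valid for any Banach space $B$ because the time interval $[0,1]$ is one-dimensional: for a map $u$ with $u,\dot u,\ddot u\in L^2([0,1],B)$, the fundamental theorem of calculus together with the Cauchy--Schwarz inequality controls $\sup_t\|u(t)\|_B$ and $\sup_t\|\dot u(t)\|_B$ by $\|u\|_{H^2([0,1],B)}$. Writing each path as $x_n=\on{Id}+u_n$, with $u_n$ taking values in the linear modelling space $H^s(N)$, and taking $B=H^s(N)$, the boundedness obtained in item~(1) transfers to a uniform bound on $\sup_t\|u_n(t)\|_{H^s}+\sup_t\|\dot u_n(t)\|_{H^s}$. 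This is exactly boundedness in the ambient linear space $C^1([0,1],H^s(N))$ in which $C^1([0,1],\on{Diff}^s(N))$ sits, giving item~(2).

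There is no genuine obstacle here; the two points to handle with care are mild. First, the interpretation of ``bounded'' in the nonlinear space $\on{Diff}^s(N)$ is resolved by working in the linear chart $\varphi\mapsto\varphi-\on{Id}$ and measuring in $H^s(N)$. Second, the passage from Bochner $H^2$-integrability in time to pointwise-in-time $C^1$ control relies only on the one-dimensionality of the time variable and not on any structure of the target. Note finally that the paths of the minimizing sequence already take values in $\on{Diff}^s(N)$ by admissibility, so no further argument is needed to keep them inside the group.
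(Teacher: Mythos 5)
Your treatment of item (1) matches the paper exactly: both simply read it off the coercivity statement of Theorem \ref{th:AccelerationCoercive} applied to the bounded sequence of values $\mathcal{J}(x_n)$. The Sobolev embedding $H^2([0,1],B)\hookrightarrow C^1([0,1],B)$ you invoke for item (2) is also correct as a statement about the ambient Banach space $B=H^s(N)$.

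The gap is in your reading of what ``bounded in $C^1([0,1],\on{Diff}^s(N))$'' means. You resolve it by declaring that boundedness in the nonlinear space is measured in the chart $\varphi\mapsto\varphi-\on{Id}$ with the $H^s(N)$ norm; under that reading item (2) is an immediate consequence of item (1) and would carry no additional content. The paper intends the intrinsic notion: the paths must stay in a \emph{metrically} bounded ball of $\on{Diff}^s(N)$ for the right-invariant distance, which in particular forces the Jacobians to stay uniformly away from degeneracy. This is strictly stronger than $H^s$-norm boundedness of $x_n-\on{Id}$: a sequence can be bounded in the ambient $H^s$ norm while drifting to the boundary of the group where the Jacobian vanishes and the Riemannian distance to the identity blows up, and it is precisely on metrically bounded balls --- and only there --- that the group topology and the $H^s(N)$ topology are equivalent, as the paper recalls in its proof. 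The paper therefore first bounds the geodesic energy $\int_0^T g(\dot x,\dot x)\,\ud t$ of the minimizing sequence by the acceleration functional via Lemma \ref{th:CSonAcceleration}, then applies \cite[Theorem 3.1]{BruverisVialard} to conclude that the paths remain in a fixed metrically bounded ball, i.e.\ are bounded in $C^0([0,1],\on{Diff}^s(N))$ in the group sense; only after that does the $H^2$-in-time bound of item (1) upgrade, through the equivalence of topologies on that ball, to boundedness in $C^1([0,1],\on{Diff}^s(N))$. Your argument omits this metric-boundedness step entirely, so it establishes only the weaker ambient-space conclusion $\sup_n\|x_n-\on{Id}\|_{C^1([0,1],H^s(N))}<\infty$ and not the statement the corollary is after.
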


\begin{proof}
The first point is Theorem \eqref{th:AccelerationCoercive}. 
Let us prove the second point. Using Lemma \ref{th:CSonAcceleration}, one can apply \cite[Theorem 3.1]{BruverisVialard} which gives that the path is bounded in $C^0([0,1],\on{Diff}^s(N))$. 
Recall that on every metrically bounded ball the topologies of $\on{Diff}^s(N)$ and $H^s(N)$ are equivalent.
 Since the path is also bounded in $H^2([0,1],H^s(N))$, it gives that the path is bounded in $C^1([0,1],\on{Diff}^s(N))$. 
\end{proof}

In order to study the relaxation problem, we will need a controllability lemma which is valid on an infinite dimensional Riemannian manifold. 

\begin{lemma}\label{Th:LocalCubics}
Let $(M,g)$ be a Riemannian manifold possibly of infinite dimensions and a $C^1$ curve $c(t) \in M$ such that $c(0) = x \in M$. Let $V \subset T_xM$ be an open neighborhood of $0$ on which the exponential map is a diffeomorphism and we denote by $\log_x$ the inverse of this map. Consider the map, for any $t>0$,
\begin{align}
R_c(t): (T_xM)^2 &\to TM\\
 (u_1,u_2) & \mapsto (\exp_x(z(t,u_1,u_2)), d\!\exp_x(z(t,u_1,u_2))(\dot{c}(0) + t u_1 + \frac 12 t^2 u_2))\,,
\end{align}
where $z(t,u_1,u_2) = \log_x(c(t)) + \frac 12 t^2 u_1 + \frac 16 t^3 u_2$ and $\partial_t z(t,u_1,u_2) = \frac{\ud}{\ud t} \log_x(c(t)) + t u_1 + \frac 12 t^2 u_2$.
For $t>0$ and small enough, the map $R_c$ is a local diffeomorphism at $(0,0) \in (T_xM)^2$. 
\end{lemma}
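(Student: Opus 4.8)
The plan is to recognize $R_c(t)$ as the composition of a fixed tangent-bundle chart with a map that is \emph{affine} in $(u_1,u_2)$, so that the whole statement reduces to the invertibility of an explicit $2\times2$ block operator whose blocks are scalar multiples of the identity.

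First I would fix $t>0$ small enough that $c(t)\in\exp_x(V)$; this is possible since $c$ is continuous with $c(0)=x=\exp_x(0)$, so $\log_x(c(t))$ is well defined and tends to $0\in V$ as $t\to0$. Introduce the tangent-bundle chart
\begin{equation*}
\Phi\colon V\times T_xM\to TM,\qquad \Phi(w,\xi)=\big(\exp_x(w),\,d\!\exp_x(w)\xi\big).
\end{equation*}
Since $\exp_x\colon V\to\exp_x(V)$ is a diffeomorphism, each $d\!\exp_x(w)$ is a toplinear isomorphism, so $\Phi$ is a smooth bijection onto the restriction of $TM$ over $\exp_x(V)$, with smooth inverse $(y,\eta)\mapsto\big(\log_x y,\,(d\!\exp_x(\log_x y))^{-1}\eta\big)$; hence $\Phi$ is a diffeomorphism. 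By the very definition of $R_c(t)$ one has $R_c(t)=\Phi\circ\tilde R_c(t)$ on a neighborhood of $(0,0)$, where
\begin{equation*}
\tilde R_c(t)(u_1,u_2)=\Big(\log_x(c(t))+\tfrac12 t^2u_1+\tfrac16 t^3u_2,\ \dot c(0)+t\,u_1+\tfrac12 t^2u_2\Big)\in V\times T_xM;
\end{equation*}
for $(u_1,u_2)$ close to $(0,0)$ the first slot stays in $V$, so the composition is defined. As $\Phi$ is a diffeomorphism, it suffices to prove that $\tilde R_c(t)$ is a local diffeomorphism at $(0,0)$.

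The map $\tilde R_c(t)$ is affine in $(u_1,u_2)$, so its differential is the constant bounded operator on the Hilbert space $(T_xM)^2$ given in block form by
\begin{equation*}
D\tilde R_c(t)=\begin{bmatrix}\tfrac12 t^2\,\mathrm{Id} & \tfrac16 t^3\,\mathrm{Id}\\ t\,\mathrm{Id} & \tfrac12 t^2\,\mathrm{Id}\end{bmatrix}.
\end{equation*}
All four blocks are scalar multiples of $\mathrm{Id}$, hence commute, and the scalar ``determinant'' equals $\tfrac14 t^4-\tfrac16 t^4=\tfrac1{12}t^4\neq0$ for $t>0$; consequently the operator is invertible with the explicit bounded inverse
\begin{equation*}
\big(D\tilde R_c(t)\big)^{-1}=12\,t^{-4}\begin{bmatrix}\tfrac12 t^2\,\mathrm{Id} & -\tfrac16 t^3\,\mathrm{Id}\\ -t\,\mathrm{Id} & \tfrac12 t^2\,\mathrm{Id}\end{bmatrix}.
\end{equation*}
Thus $D\tilde R_c(t)$ is a toplinear isomorphism, and the inverse function theorem in Banach spaces (or simply the fact that an affine map with invertible bounded linear part is a diffeomorphism) shows that $\tilde R_c(t)$, and therefore $R_c(t)=\Phi\circ\tilde R_c(t)$, is a local diffeomorphism at $(0,0)$.

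There is no genuine analytic difficulty here; the only points deserving care are bookkeeping, namely checking that the infinite-dimensional chart $\Phi$ is really a diffeomorphism---which is exactly the hypothesis that $\exp_x$ is a diffeomorphism on $V$---and observing that the inverse above is bounded precisely because every block is a multiple of $\mathrm{Id}$. One should also note that the scalar determinant $\tfrac1{12}t^4\to0$ as $t\to0$, so the radius of the neighborhood of invertibility shrinks as $t\to0$; this is why the conclusion is asserted for each fixed small $t>0$ rather than uniformly in $t$.
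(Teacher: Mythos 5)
Your proof is correct and follows essentially the same route as the paper's: both factor $R_c(t)$ through the chart $(w,\xi)\mapsto(\exp_x(w),\,d\!\exp_x(w)\xi)$ (the paper's $d\!\exp:TT_xM\to TM$) composed with the affine map in $(u_1,u_2)$, and both reduce the statement to the invertibility of the same $2\times 2$ block matrix with scalar determinant $\tfrac1{12}t^4\neq 0$. Your write-up is merely a little more explicit about the bounded inverse and the Banach-space inverse function theorem.
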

\begin{proof}
We first treat the case when $M$ is the Euclidean space denoted by $E$. In this case, the curve $z$ is $z(t,u_1,u_2) = c(t) + \frac 12 t^2 u_1 + \frac 16 t^3 u_2$. The differential of the map $R_c(t): E^2 \mapsto E$ is given by 
\begin{equation}
d R_c(t) (0,0) = \begin{pmatrix} (t^2/2) \on{Id} & (t^3/6) \on{Id}\\ t \on{Id} &(t^2/2) \on{Id} \end{pmatrix}
\end{equation}
which is invertible for all time $t>0$.
For $t,u_1,u_2$ small enough, $z(t,u_1,u_2)$ lies in the neighborhood $V$. 
\par 
Let us treat the general case of a manifold. Since the exponential map is a local diffeomorphism on $V$, the map $d\!\exp: TT_xM \mapsto TM$ is also a local diffeomorphism for any element $(v,w)\in TT_xM \simeq (T_xM)^2$ such that $v \in V$. Therefore, it is sufficient to prove the result on the map
$S: (T_xM)^2 \to TT_xM$ defined by $S(t,u_1,u_2) = (z(t,u_1,u_2),z'(t,u_1,u_2))$ which reduces to the Euclidean case treated above.
\end{proof}
This lemma will be used to extend the relaxation result to general initial conditions on the path and general endpoint conditions on the defect measures by developping a perturbation argument.
\begin{remark}\label{Rem:Norm}
When the curve $c$ lies in $H^2([0,1],M)$, a direct estimation leads to 
\begin{equation}
\|R_c(\cdot,u_1,u_2) - c \|_{H^2([0,1],M)} \leq k \max(\| u_1\|,\| u_2\|)
\end{equation}
where $R_c(\cdot,u_1,u_2)$ denotes the path defined in Lemma \ref{Th:LocalCubics}. The constant $k$ is local and depends on the metric in the neighborhood $V$.
\end{remark}

\subsection{The case of $\on{Diff}_0^2([0,1])$}
\begin{notation}
Recall the geodesic equation
\begin{equation}\begin{cases}
\dot{q} = \eta(q)p\\
\dot{p} = - \frac 12 \int_x^1 \eta(q)p^2 \ud y +  \begin{bmatrix} 1 & \varphi \end{bmatrix} H_2^{-1}  \begin{bmatrix} a \\ b +c \end{bmatrix}
\end{cases}
\end{equation}
with $a,b,c$ the coefficients defined in \eqref{eq:FirstProjection}, \eqref{eq:SecondProjection1} and \eqref{eq:SecondProjection2} and define
\begin{equation} \label{Eq:NonLocalOperator}
\mathcal{U}(f,q)(x) \eqdef  \frac 12 \int_x^1 \eta(q) f \ud y\,.
\end{equation}
The projection $\begin{bmatrix} 1 & \varphi \end{bmatrix} H_2^{-1}  \begin{bmatrix} a \\ b +c  \end{bmatrix} $ will be decomposed into two terms:

\begin{align*}
&\pi_1(p,q) \eqdef \begin{bmatrix} 1 & \varphi \end{bmatrix} H_2^{-1}  \begin{bmatrix} 0 \\ b  \end{bmatrix} \\
&\pi_2(q,f) \eqdef (\on{id} - \pi^*_q)(\mathcal{U}(f,q))\,.
\end{align*}
We will often omit arguments in $\pi_1$, $\pi_2$ and $\mathcal{U}$ when there is no possible confusion.
\end{notation}
The decomposition into two parts for the projection is used for clarity in the next proof. Indeed, $\pi_1$ is continuous w.r.t. the weak topology whereas $\pi_2$ is not.

\begin{definition}[Minimization of the acceleration]
On the Hilbert manifold $Q$, the minimization of the acceleration can be rewritten as the minimization of
\begin{equation}\label{Eq:InitialAccelerationFunctionalWithBoundary}
\mathcal{J}_0(p,q) = \int_{D} \! \eta(q)  \left( \dot{p}  + \mathcal{U}(p^2,q) -  \pi_1 - \pi_2 \right)^2 \! \ud x \ud t\,,
\end{equation}
on the set of curves 
\begin{multline} 
\mathcal{C}^{0,1} \eqdef \Big\{ (p(t),q(t)) \in T^*Q \, : \, q \in H^2([0,1],L^2([0,1])) \text{ such that } \dot{q} = \eta(q)p \\ \text{ and under the  boundary constraints } q(0) = q_0\,, \,q(1)=q_1  \text{ and } p(0) = p_0 \in T_{q_0}^*Q\,, p(1)=p_1 \in T_{q_1}^*Q\Big\}\,.
\end{multline} 
We shall be mainly interested in similar minimization problems but for a relaxed endpoint constraint, namely
\begin{equation}\label{eq:DefinitionRelaxedFunctional}
\mathcal{J}(p,q)=
\mathcal{J}_0(p,q) + \frac{1}{\sigma_1^2} \| p(1)-p_1 \|^2_{L^2} + \frac{1}{\sigma_2^2}  \| q(1)-q_1 \|^2_{L^2}\,,
\end{equation}
where $\sigma_1, \sigma_2$ are two positive constants. The set of admissible curves will be denoted by $\mathcal{C}^{0}$ is defined as above but without the constraint at time $1$, namely $q(1) = q_1$ and $p(1) = p_1$. 
\end{definition}

Due to the square on $p$ in $\mathcal{U}$, the functional $\mathcal{J}$ is expected to fail being lower semi continuous with respect to the weak topology on $\mathcal{C}$. We now introduce the set of relaxed curves.

\begin{definition}[Set of relaxed curves]
The set of relaxed curves is 
\begin{multline} 
\mathcal{C}_R^{0} \eqdef \Big\{ (p,q,\mu, \nu) \in \mathcal{C}^0 \times \mathcal{M}^2 \, : \, \mu \geq p^2 \,, \nu \geq \dot{p}^2 \, \text{ and } \on{FR}(\mu - p^2,\partial_t(\mu - p^2))  \preccurlyeq \nu-\dot{p}^2 \\ \text{under the  boundary constraints } q(0) = q_0\,, \, p(0) = p_0 \in T_{q_0}^*Q \text{ and } \mu(0) = p(0)^2 \Big\}\,.
\end{multline}
We also denote by $\mathcal{C}_R^{0,1}$ the set as above under the additional constraint at time $1$ given by 
$q(1) = q_1$, $p(1) = p_1 \in T_{q_1}^*Q$ and $\mu(1) = p(1)^2$.\\
In the sequel, the notation $\mathcal{C}_R$ stands for either $\mathcal{C}_R^{0}$ or $\mathcal{C}_R^{0,1}$ and we will use the same convention for $\mathcal{C}$.
\end{definition}
\ifcomments
\begin{remark}[Open question]\label{Re:OpenQuestion1}
We do not know if the relaxation of $\mathcal{C}^{0,1}$ is actually $\mathcal{C}^{0,1}_R$ or only a subset of it.
\end{remark}
\fi
\begin{proposition}
The sets $\mathcal{C}_R^{0}$ and $\mathcal{C}_R^{0,1}$ are closed in the weak topology on $\mathcal{C} \times \mathcal{M}^2(D)$.
\end{proposition}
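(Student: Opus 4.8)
The plan is to prove sequential closedness, which suffices here since the weak topology on the bounded subsets of $\mathcal{M}(D)$ (and of the Hilbert spaces carrying $\mathcal{C}$) is metrizable. So let $(p_n,q_n,\mu_n,\nu_n)$ be a sequence in $\mathcal{C}_R^0$ converging weakly to $(p,q,\mu,\nu)$; being convergent it is bounded, and I must check that the limit satisfies every defining condition. The membership $(p,q)\in\mathcal{C}^0$ is the easy part: by Lemma \ref{th:WeakConvergenceLemma} one has $\eta(q_n)\to\eta(q)$ strongly in $C^0(D)$, so $\eta(q_n)p_n\rightharpoonup\eta(q)p$, and passing to the limit in $\dot q_n=\eta(q_n)p_n$ yields $\dot q=\eta(q)p$; the initial conditions $q(0)=q_0$ and $p(0)=p_0$ survive because evaluation at $t=0$ is weakly continuous on $H^1([0,1],L^2([0,1]))$.

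The core difficulty is the Fisher--Rao constraint, which is genuinely nonlinear in $p$: weak convergence of $p_n$ does not imply convergence of $p_n^2$ or $\dot p_n^2$, so defect measures appear. To organize them I would set $g_n\eqdef p_n-p\rightharpoonup 0$ and, after extracting a subsequence, introduce the weak-$*$ limits $G\eqdef\lim g_n^2$ and $\Gamma\eqdef\lim\dot g_n^2$ in $\mathcal{M}(D)$ (their masses are bounded since $\mu_n\geq p_n^2$ and $\nu_n\geq\dot p_n^2$ keep $p_n^2,\dot p_n^2$ bounded). Since the cross terms satisfy $2p\,g_n\rightharpoonup 0$ and $2\dot p\,\dot g_n\rightharpoonup0$ in $\mathcal{M}(D)$, one gets $p_n^2\rightharpoonup p^2+G$ and $\dot p_n^2\rightharpoonup \dot p^2+\Gamma$. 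Writing $m_n\eqdef\mu_n-p_n^2\geq0$ and $r_n\eqdef\nu_n-\dot p_n^2\geq0$, these converge weakly to $m\eqdef\mu-p^2-G$ and $r\eqdef\nu-\dot p^2-\Gamma$, which are therefore nonnegative; in particular $\mu-p^2=m+G\geq0$ and $\nu-\dot p^2=r+\Gamma\geq0$, establishing the constraints $\mu\geq p^2$ and $\nu\geq\dot p^2$.

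It remains to prove $\on{FR}(\mu-p^2,\partial_t(\mu-p^2))\preccurlyeq\nu-\dot p^2$, and the idea is to split it across the two defects. First, the \emph{measure part}: the pair $(m_n,\partial_t m_n)$ converges weakly (the masses of $\partial_t m_n$ are controlled by $|\partial_t m_n|\leq 2\sqrt{r_n}\sqrt{m_n}$), so by weak lower semicontinuity of the convex functional $\on{FR}_f$ one obtains $\on{FR}_f(m,\partial_t m)\leq\liminf_n\on{FR}_f(m_n,\partial_t m_n)\leq\lim_n\langle r_n,f\rangle=\langle r,f\rangle$, i.e. $\on{FR}(m,\partial_t m)\preccurlyeq r$. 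Second, the \emph{oscillation part}: testing against a positive test function $f$ and integrating by parts in time gives $\langle g_n^2,\partial_t f\rangle=-2\langle g_n\dot g_n,f\rangle$, the temporal boundary term at $t=0$ vanishing because $g_n(0)=p_n(0)-p(0)=0$ (this is exactly where the fixed initial momentum enters). The Cauchy--Schwarz inequality $|\langle g_n\dot g_n,f\rangle|\leq\langle g_n^2,f\rangle^{1/2}\langle\dot g_n^2,f\rangle^{1/2}$ then passes to the limit and yields $\langle G,\partial_t f\rangle^2\leq4\langle\Gamma,f\rangle\langle G,f\rangle$, that is $\on{FR}(G,\partial_t G)\preccurlyeq\Gamma$ by Proposition \ref{Th:ShortFormulation}.

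Finally I would add the two inequalities. Because the integrand $r$ of Definition \ref{def:FR} is convex and positively one-homogeneous, $\on{FR}_f$ is subadditive, so with $\mu-p^2=m+G$ and $\nu-\dot p^2=r+\Gamma$ we obtain $\on{FR}_f(\mu-p^2,\partial_t(\mu-p^2))\leq\on{FR}_f(m,\partial_t m)+\on{FR}_f(G,\partial_t G)\leq\langle r,f\rangle+\langle\Gamma,f\rangle=\langle\nu-\dot p^2,f\rangle$ for every positive $f$, which is the required Fisher--Rao inequality. The endpoint conditions follow from time regularity: by Proposition \ref{Th:TimeRegularity}, the uniform bound on $\on{FR}_f(m_n,\partial_t m_n)$ (together with the analogous bound for $p_n^2$) makes $\mu_n$ equi-H\"older in $C^{0,1/2}([0,1],\mathcal{M}_+([0,1]))$, so Arzel\`a--Ascoli upgrades weak convergence to uniform convergence of the paths of measures and gives $\mu(0)=\lim_n\mu_n(0)=p_0^2=p(0)^2$; for $\mathcal{C}_R^{0,1}$ the same argument at $t=1$, using $p_n(1)=p_1$, yields $\mu(1)=p(1)^2$ and the remaining endpoint constraints on $(p,q)$. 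The main obstacle is precisely this Fisher--Rao step: isolating the two defect measures $G,\Gamma$ and checking that the inequality survives the weak limit, which succeeds only because $\on{FR}$ is jointly convex, one-homogeneous and weakly lower semicontinuous, and because the oscillation defect carries no temporal boundary mass at $t=0$.
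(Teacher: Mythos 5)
Your proof is correct and follows essentially the same route as the paper's: you extract the weak limits of $p_n^2$ and $\dot p_n^2$, split the defect $\mu-p^2$ into the oscillation part $G=\lim (p_n-p)^2$ (controlled by Cauchy--Schwarz) and the excess-measure part $m=\lim(\mu_n-p_n^2)$ (controlled by lower semicontinuity of $\on{FR}_f$), and recombine by subadditivity, exactly as in the paper with $\pi-p^2$ and $\mu-\pi$ in place of your $G$ and $m$. The only substantive difference is that you justify the endpoint condition $\mu(0)=p(0)^2$ via Proposition \ref{Th:TimeRegularity} and Arzel\`a--Ascoli, where the paper declares it trivial, and you leave implicit the verification that $(p,q)$ stays in $T^*Q$ (weak continuity of the constraints defining $Q$ and of $\pi_{q_n}(p_n)=p_n$), which the paper spells out in one line.
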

\begin{proof}
Let $(p_n,q_n,\mu_n,\nu_n) \in \mathcal{C}_R$ be a weakly convergent sequence in $\mathcal{C} \times \mathcal{M}^2$ to $(p,q,\mu, \nu) \in \mathcal{C}_R$.
Up to extracting subsequences, we can assume in addition that $p_n^2 \rightharpoonup \pi$, $\dot{p}_n^2 \rightharpoonup \delta$ in $\mathcal{M}$. By passing to the limit and using Fatou's lemma on $\mu_n \geq p_n^2$ and $\nu_n\geq \dot{p}^2 $, we first obtain $\mu \geq \pi \geq p^2$ and $\nu \geq \delta \geq \dot{p}^2$ which are the two first conditions on the measures $\mu,\nu$.

On one hand, we first write $p_n = p + \alpha_n$ with $\alpha_n \rightharpoonup 0$, $\alpha_n^2 \rightharpoonup \pi - p^2$ and $\dot{\alpha_n}^2 \rightharpoonup \delta - \dot{p}^2$. By Cauchy-Schwarz inequality, we have for every $f \in C^\infty([0,1],\R_+^*)$,
\begin{equation*}
\on{FR}_f(\alpha_n^2,\partial_t (\alpha_n^2)) \leq \langle \dot{\alpha_n}^2, f \rangle\,.
\end{equation*}
The right-hand side is converging to $\langle \delta - \dot{p}^2, f \rangle$. Since the Fisher-Rao functional is lower semi-continuous, passing to the limit gives
\begin{equation}\label{eq:IneqFR1}
\on{FR}_f(\pi - p^2,\partial_t (\pi - p^2)) \leq \langle \delta - \dot{p}^2, f \rangle\,.
\end{equation}
Moreover, by assumption, we have the inequality 
\begin{equation*}
\on{FR}_f(\mu_n - p_n^2,\partial_t(\mu_n - p_n^2)) \leq \langle \nu_n - \dot{p}_n^2, f \rangle\,,
\end{equation*}
on which, the same previous arguments give
\begin{equation}\label{eq:IneqFR2}
\on{FR}_f(\mu -\pi,\partial_t (\mu - \pi)) \leq \langle \nu - \delta, f \rangle\,.
\end{equation}
On the other hand, the Fisher-Rao functional is subadditive (one-homogeneous and convex) which implies
\begin{equation*}
\on{FR}_f(\mu - p^2,\partial_t(\mu - p^2)) \leq \on{FR}_f(\mu - \pi,\partial_t(\mu-\pi))  + \on{FR}_f(\pi - p^2,\partial_t(\pi - p^2))\,,
\end{equation*}
which, using inequality \eqref{eq:IneqFR2}, gives
\begin{equation*}
\on{FR}_f(\mu - p^2,\partial_t(\mu - p^2)) \leq \langle \nu - \delta, f \rangle  + \on{FR}_f(\pi - p^2,\partial_t(\pi-p^2))\,.
\end{equation*}
Introducing $\dot{p}^2$ in the dual pairing, this last inequality can be rewritten as
\begin{equation*}
\on{FR}_f(\mu - p^2,\partial_t(\mu - p^2)) \leq \langle \nu - \dot{p}^2, f \rangle + \langle \dot{p}^2 - \delta, f \rangle +  \on{FR}_f(\pi - p^2,\partial_t(\pi - p^2))  \,.
\end{equation*}
By inequality \eqref{eq:IneqFR1}, we have 
$\langle \dot{p}^2 - \delta, f \rangle +  \on{FR}_f(\pi - p^2,\partial_t(\pi - p^2)) \leq 0$.
We thus get the result
\begin{equation*}
\on{FR}_f(\mu - p^2,\partial_t(\mu - p^2)) \leq \langle \nu - \dot{p}^2, f \rangle  \,.
\end{equation*}

The last condition to be checked is that $(p,q) \in T^*Q$, which is also true since the constraints defining $Q$ are weakly continuous: $\lim_{n\to \infty} \int_0^1 q_n \ud x = \int_0^1 q \ud x$ and $\lim_{n\to \infty} \int_0^1 \eta(q_n) \ud x = \int_0^1 \eta(q) \ud x$ and Lemma \ref{th:WeakConvergenceLemma} gives that $\pi_{q_n}(p_n) = p_n$ weakly converges to $\pi_q(p)$ and therefore $\pi_q(p) = p$.
\par
Last, the boundary constraints are trivially satisfied under weak convergence.
\end{proof}

\begin{remark}
We could have written the relaxed set $\mathcal{C}_R$ in terms of the defect measures $\mu - p^2$ and $\nu -\dot{p}^2$. However, the corresponding relaxed acceleration functional would have been only lower semi-continuous and not continuous with respect to these defect measures. Although it does not change the result, we prefer working with a continuous relaxed functional, as defined below.
\end{remark}

\begin{definition}[Relaxed acceleration functional]
The relaxed functional defined on $\mathcal{C}_{R}^{0,1}$ is  
\begin{multline}\label{eq:RelaxedAccelerationFunctional}
\mathcal{J}_R^{0,1}(p,q,\mu,\nu)\! = \! \int_D\! \eta \ud \nu \!+ \!\int_{D} \! \eta \Big( [\dot{p}  + \mathcal{U}(\mu,q) -  \pi_1(p,q) - \pi_2(\mu,q) ]^2 - \dot{p}^2\Big) \! \ud x \!\ud t \,.
\end{multline}
The relaxed acceleration functional for a soft constraint at time $1$ defined on $\mathcal{C}_R^{0}$ is
\begin{multline}\label{eq:RelaxedAccelerationFunctional2}
\mathcal{J}_R^0(p,q,\mu,\nu)\! = \! \int_D\! \eta \ud \nu \!+ \!\int_{D} \! \eta \Big( [\dot{p}  + \mathcal{U}(\mu,q) -  \pi_1(p,q) - \pi_2(\mu,q) ]^2 - \dot{p}^2\Big) \! \ud x \!\ud t + P(p(1),q(1))\,,
\end{multline}
where $P$ is a functional on $T^*Q$ which is the penalization term.
The notation $\mathcal{J}_R$ stands for either $\mathcal{J}_R^{0,1}$ or $\mathcal{J}_R^{0}$.
\end{definition}

\begin{proposition}\label{th:Continuity}
The functional $\mathcal{J}_R^{0,1}$ is lower semi-continuous with respect to the weak topology. 
\par If $P$ is also l.s.c with respect to the weak topology on $(L^2[0,1])^2$ it is also the case for $\mathcal{J}_R^{0}$.
\end{proposition}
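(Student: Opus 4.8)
The plan is to take an arbitrary sequence $(p_n,q_n,\mu_n,\nu_n)\rightharpoonup(p,q,\mu,\nu)$ converging weakly in $\mathcal{C}\times\mathcal{M}^2(D)$ with all terms in $\mathcal{C}_R^{0,1}$, and to prove $\liminf_n\mathcal{J}_R^{0,1}(p_n,q_n,\mu_n,\nu_n)\geq\mathcal{J}_R^{0,1}(p,q,\mu,\nu)$. First I would collect the strong convergences furnished by Lemma \ref{th:WeakConvergenceLemma}: $\eta(q_n)\to\eta(q)$ uniformly on $D$ (with a uniform positive lower bound), $\varphi(q_n)\to\varphi(q)$ in $C^0([0,1],C^1)$, and $\pi_{q_n},\pi_{q_n}^*$ converging in operator norm. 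Since $\mathcal{U}(\cdot,q)-\pi_2(\cdot,q)=\pi_q^*\mathcal{U}(\cdot,q)$, the quantity inside the square equals $\dot p_n+W_n$ with $W_n:=\pi_{q_n}^*\mathcal{U}(\mu_n,q_n)-\pi_1(p_n,q_n)$. I would record that $\pi_1(p_n,q_n)\to\pi_1(p,q)$ strongly in $L^2(D)$ — it is weakly continuous because it factors through the smoothing map $p\mapsto\int_0^x p\circ\varphi^{-1}$ — whereas $\pi_{q_n}^*\mathcal{U}(\mu_n,q_n)$, and hence $W_n$, converges only weakly, inheriting the time oscillations of $\mu_n$. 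Thus $W_n\rightharpoonup W$ and $\dot p_n+W_n\rightharpoonup\dot p+W$ weakly but not strongly in $L^2(D)$, which is why one can only hope for lower semicontinuity.

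Second, I would regroup the functional as $\mathcal{J}_R^{0,1}=\int_D\eta\,\ud(\nu-\dot p^2)+\int_D\eta\,(\dot p+W)^2$, where $\nu-\dot p^2\geq0$ as a measure by the very definition of $\mathcal{C}_R$ (I abbreviate $\dot p^2\,\ud x\,\ud t$ by $\dot p^2$). The convex term is the easy one: since $\dot p_n+W_n\rightharpoonup\dot p+W$ in $L^2(D)$ and $\eta(q_n)\to\eta(q)$ uniformly and positively, the standard weak lower semicontinuity of $v\mapsto\int_D\eta\,v^2$ gives $\liminf_n\int_D\eta(q_n)(\dot p_n+W_n)^2\geq\int_D\eta(q)(\dot p+W)^2$. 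The linear pairing $\int_D\eta\,\ud\nu$ is in fact weakly continuous, being the duality between the uniformly convergent functions $\eta(q_n)$ and the mass-bounded weakly convergent measures $\nu_n$.

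The genuine difficulty is the remaining piece $\int_D\eta\,\ud(\nu-\dot p^2)$, which is \emph{concave} in $\dot p$ and therefore not weakly lower semicontinuous by itself. Writing $\dot p_n^2\rightharpoonup\delta$ with $\delta\geq\dot p^2$, the sequence only yields $\int_D\eta(q_n)\,\ud(\nu_n-\dot p_n^2)\to\int_D\eta\,\ud(\nu-\delta)$, which undershoots the target value $\int_D\eta\,\ud(\nu-\dot p^2)$ by exactly the oscillation defect $\int_D\eta\,\ud(\delta-\dot p^2)\geq0$ of $\dot p_n$. The whole argument therefore reduces to recovering this deficit from the gain in the convex square, i.e.\ to proving $\liminf_n\int_D\eta\,[(\dot p_n+W_n)-(\dot p+W)]^2\geq\int_D\eta\,\ud(\delta-\dot p^2)$. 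This is the crux, and it is false for unrelated oscillations, since the oscillation of $\dot p_n$ could in principle be cancelled by that of $W_n$. The resolution is to use the Fisher--Rao constraint built into $\mathcal{C}_R$: the time oscillations of $\mu_n$ that drive $W_n$ are bounded by $\nu_n-\dot p_n^2$ through $\on{FR}(\mu_n-p_n^2,\partial_t(\mu_n-p_n^2))\preccurlyeq\nu_n-\dot p_n^2$, and passing this to the limit exactly as in \eqref{eq:IneqFR1} and Proposition \ref{Th:ShortFormulation} shows that any such cancellation is already charged to the term $\int_D\eta\,\ud\nu$; combining the limiting Fisher--Rao inequality with the convex lower bound above closes the gap. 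I expect this coupling of the two defect measures through Fisher--Rao to be the main technical obstacle.

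Finally, for the soft-constraint functional $\mathcal{J}_R^0$ on $\mathcal{C}_R^0$, the integral part is handled verbatim, since the endpoint constraint at $t=1$ plays no role in the lower semicontinuity of the integral, and it remains to add $P(p(1),q(1))$. Since the trace maps $p\mapsto p(1)$ and $q\mapsto q(1)$ are bounded linear, hence weakly continuous, from $H^1([0,1],L^2)$ to $L^2([0,1])$, one has $(p_n(1),q_n(1))\rightharpoonup(p(1),q(1))$ in $(L^2[0,1])^2$; if $P$ is weakly lower semicontinuous there, then $\mathcal{J}_R^0$ is lower semicontinuous, being the sum of two weakly lower semicontinuous functionals.
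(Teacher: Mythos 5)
Your overall setup (the convergences collected from Lemma \ref{th:WeakConvergenceLemma}, the treatment of the linear term $\int_D\eta\,\ud\nu$, and the final paragraph on $P$ and the trace map) is fine, but the core of your argument both misses the point of how the functional was written and leaves its own crux unproven. The regrouping $\mathcal{J}_R^{0,1}=\int_D\eta\,\ud(\nu-\dot p^2)+\int_D\eta(\dot p+W)^2$ undoes exactly the cancellation that the definition of $\mathcal{J}_R$ was engineered to exploit: see the remark preceding the definition of the relaxed functional, where the authors explain that they deliberately avoid writing things in terms of the defect measure $\nu-\dot p^2$ precisely so as to obtain a \emph{continuous} functional. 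The paper's proof simply expands $[\dot p+W]^2-\dot p^2=2\dot p\,W+W^2$ with $W=\mathcal{U}(\mu,q)-\pi_1(p,q)-\pi_2(\mu,q)$: the term quadratic in $\dot p$ disappears, what remains is the $L^2$ pairing of the weakly convergent $\dot p_n$ against the strongly convergent $\eta(q_n)W_n$ plus a strongly convergent remainder, and $\int_D\eta\,\ud\nu$ is the pairing of weak-$*$ convergent measures with uniformly convergent functions. No convexity, no compensation between defect measures, and no Fisher--Rao inequality is needed; the functional is in fact continuous, which is exactly what Theorem \ref{Th:Relaxation} requires later.

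Your route instead creates a concave term $-\int_D\eta\,\dot p^2$ whose upper-semicontinuity defect $\int_D\eta\,\ud(\delta-\dot p^2)$ must be recovered from the gain of the convex square, i.e.\ you must prove $\liminf_n\int_D\eta\,[(\dot p_n+W_n)-(\dot p+W)]^2\geq\int_D\eta\,\ud(\delta-\dot p^2)$. You correctly flag this as the crux and as false for unrelated oscillations, but you never prove it: asserting that passing the Fisher--Rao constraint to the limit ``closes the gap'' is not an argument, and it is not clear how \eqref{eq:IneqFR1} or Proposition \ref{Th:ShortFormulation} would produce a lower bound on the $L^2$ distance between $\dot p_n+W_n$ and its weak limit. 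The gap traces back to your premise that $W_n$ ``converges only weakly, inheriting the time oscillations of $\mu_n$''. For sequences in $\mathcal{C}_R$ with $\nu_n$ weakly convergent (hence of bounded mass), Proposition \ref{Th:TimeRegularity} gives a uniform $C^{0,1/2}$-in-time bound on $\mu_n-p_n^2$, which together with Lemma \ref{th:WeakConvergenceLemma} is what the paper invokes to conclude that $\mathcal{U}(\mu_n,q_n)-\pi_1(p_n,q_n)-\pi_2(\mu_n,q_n)$ converges \emph{strongly}. Once that strong convergence is granted, your crux inequality becomes an identity, since $(\dot p_n+W_n)-(\dot p+W)=(\dot p_n-\dot p)+o_{L^2}(1)$ and $(\dot p_n-\dot p)^2\rightharpoonup\delta-\dot p^2$, so the whole Fisher--Rao compensation scheme is superfluous; without it, your proof is incomplete. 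Either establish the strong convergence of $W_n$ (the route the paper takes) or supply an actual proof of the compensation inequality.
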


\begin{proof}
Let $(p_n,q_n,\mu_n,\nu_n) \in \mathcal{C}_R$ be a weakly convergent sequence in $\mathcal{C} \times \mathcal{M}^2$.
The first term in \eqref{eq:RelaxedAccelerationFunctional} is the dual pairing between a weakly convergent sequence of measures and a strongly convergent sequence in $C^0(D)$, which gives the continuity of the first term.

In order to prove the continuity of the second term, we expand it into:
\begin{multline}\label{eq:Expand}
\int_{D}  2 \, \eta(q) \, \dot{p} \, (\mathcal{U}(\mu,q) -  \pi_1(p,q) - \pi_2(\mu,q))  + \eta(q) \Big(\mathcal{U}(\mu,q) -  \pi_1(p,q) - \pi_2(\mu,q)\Big)^2  \! \ud x \ud t\,.
\end{multline}
Using Lemma \ref{th:WeakConvergenceLemma}, we have that $\mathcal{U}(\mu_n,q_n) -  \pi_1(p_n,q_n) - \pi_2(\mu_n,q_n)$ strongly converges in $C^0(D)$. Therefore the second term in \eqref{eq:Expand} is strongly convergent in $C^0(D)$. The first term is the $L^2$ pairing between a weakly convergent sequence $\dot{p}_n$ and a strongly convergent sequence $\eta(q_n)\, (\mathcal{U}(\mu_n,q_n) -  \pi_1(p_n,q_n) - \pi_2(\mu_n,q_n))$. 
\end{proof}

\begin{remark}
The penalization terms $\| p(1)-p_1 \|^2_{L^2}$  and $ \| q(1)-q_1 \|^2_{L^2}$ are lower semi-continuous and the previous proposition applies to the functional \eqref{eq:DefinitionRelaxedFunctional}.
\end{remark}

\begin{theorem}\label{Th:Relaxation}
If $P$ is assumed continuous w.r.t. the weak topology then 
the continuous functional \eqref{eq:RelaxedAccelerationFunctional2} defined on $\mathcal{C}_{R}^{0}$ is the relaxation of the acceleration functional. Namely, for every $(p,q,\mu,\nu) \in \mathcal{C}_R^0$, there exists a sequence $(p_n,q_n) \in \mathcal{C}^0$ such that 
\begin{equation}\limsup_{n\to \infty} \mathcal{J}(p_n,q_n) \leq \mathcal{J}_R(p,q,\mu,\nu)\,.
\end{equation}
\end{theorem}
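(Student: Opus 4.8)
The statement is the $\limsup$ (recovery–sequence) half of the relaxation; the matching $\liminf$ bound is already contained in the closedness of $\mathcal{C}_R^0$ and the lower semicontinuity of Proposition~\ref{th:Continuity}. The plan is, for a fixed $(p,q,\mu,\nu)\in\mathcal{C}_R^0$, to build $(p_n,q_n)\in\mathcal{C}^0$ by superimposing fast \emph{spatial} oscillations on $p$. The reason the oscillations must be spatial rather than temporal is exactly the Fisher--Rao bound~\eqref{Eq:SquareRoot}: to realize a prescribed weak limit $\alpha_n^2\rightharpoonup\bar\mu:=\mu-p^2$ of the squared perturbation at the smallest acceleration cost $\int_D\eta\,\dot\alpha_n^2$, one oscillates in $x$, so that $\dot\alpha_n^2$ concentrates on the Fisher--Rao density $(\partial_t\sqrt{\bar\mu})^2=\tfrac14(\partial_t\bar\mu)^2/\bar\mu$, whereas a temporal oscillation would cost an amount diverging with the frequency.

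Set $\bar\mu:=\mu-p^2\ge0$. The constraint $\on{FR}(\bar\mu,\partial_t\bar\mu)\preccurlyeq\nu-\dot p^2$ and the endpoint condition $\mu(0)=p(0)^2$ give, through Proposition~\ref{Th:TimeRegularity}, that $\partial_t\sqrt{\bar\mu}\in L^2(D)$ and $\sqrt{\bar\mu}(0,\cdot)=0$. I would first reduce to a regular defect: using the stability of the Fisher--Rao inequality under convolution (Lemma~\ref{th:RegularisationOfFirstInequality} and Proposition~\ref{Th:ShortFormulation}) and a mollification respecting the trace at $t=0$, I replace $\bar\mu$ by smooth approximants $\bar\mu^k$ with $\sqrt{\bar\mu^k}\in L^\infty$, $\sqrt{\bar\mu^k}(0,\cdot)=0$, $\bar\mu^k\rightharpoonup\bar\mu$ and $\limsup_k\on{FR}_\eta(\bar\mu^k,\partial_t\bar\mu^k)\le\langle\nu-\dot p^2,\eta\rangle$, and I set $\nu^k:=\dot p^2+(\partial_t\sqrt{\bar\mu^k})^2$, so that $(p,q,\mu^k,\nu^k)\in\mathcal{C}_R^0$ with $\mu^k:=p^2+\bar\mu^k$. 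Fixing a $1$-periodic profile $w$ with $\int_0^1 w=0$ and $\int_0^1 w^2=1$, I take $\alpha_n(t,x):=\sqrt{\bar\mu^k}(t,x)\,w(nx)$. Riemann--Lebesgue averaging then gives $\alpha_n\rightharpoonup0$ and $\dot\alpha_n\rightharpoonup0$ in $H^1([0,1],L^2)$, $\alpha_n^2\rightharpoonup\bar\mu^k$ and $\dot\alpha_n^2\rightharpoonup(\partial_t\sqrt{\bar\mu^k})^2$ weakly as measures, and $\alpha_n(0,\cdot)=0$.

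To restore the nonlinear constraint $(p_n,q_n)\in T^*Q$ without disturbing these weak limits, I set $\tilde p_n:=p+\alpha_n\in L^2([0,1],L^\infty)$, solve the projected flow $\dot q_n=\eta(q_n)\pi^*_{q_n}(\tilde p_n)$, $q_n(0)=q_0$, of Theorem~\ref{th:Existence}, and put $p_n:=\pi^*_{q_n}(\tilde p_n)$, so that $(p_n,q_n)\in\mathcal{C}^0$ and $p_n(0)=p_0$ automatically. Proposition~\ref{th:Boundedness} yields $q_n\rightharpoonup q$, hence Lemma~\ref{th:WeakConvergenceLemma} applies; its parts (3)--(5), together with $\pi^*_{q(t)}(p(t))=p(t)$ and the identity $\tfrac{d}{dt}\pi^*_q(p)+\pi^*_q(\dot p)=\dot p$, give $p_n=p+\alpha_n+r_n$ and $\dot p_n=\dot p+\dot\alpha_n+\varrho_n$ with $r_n,\varrho_n\to0$ strongly in $L^2(D)$; in particular $p_n^2\rightharpoonup\mu^k$ and $p_n\rightharpoonup p$. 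Writing $A_n:=\mathcal{U}(p_n^2,q_n)-\pi_1(p_n,q_n)-\pi_2(p_n^2,q_n)$, Lemma~\ref{th:WeakConvergenceLemma} gives $A_n\to A^k:=\mathcal{U}(\mu^k,q)-\pi_1(p,q)-\pi_2(\mu^k,q)$ strongly in $C^0(D)$. Expanding $\mathcal{J}_0(p_n,q_n)=\int_D\eta(q_n)(\dot p_n+A_n)^2$, the square and cross terms converge by strong/weak pairing, the correction $\varrho_n$ and the term $2\dot p\dot\alpha_n$ drop out, and the genuinely oscillatory contribution is controlled by
\[
\lim_n\int_D\eta(q_n)\,\dot\alpha_n^2=\int_D\eta\,(\partial_t\sqrt{\bar\mu^k})^2=\on{FR}_\eta(\bar\mu^k,\partial_t\bar\mu^k),
\]
using $\dot\alpha_n^2\rightharpoonup(\partial_t\sqrt{\bar\mu^k})^2$ paired against the uniformly convergent $\eta(q_n)$. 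Collecting the terms and invoking the weak continuity of $P$ gives $\lim_n\mathcal{J}(p_n,q_n)=\mathcal{J}_R^0(p,q,\mu^k,\nu^k)$. Since $A^k\to A$ and $\limsup_k\mathcal{J}_R^0(p,q,\mu^k,\nu^k)\le\mathcal{J}_R^0(p,q,\mu,\nu)$ as $k\to\infty$, a diagonal extraction (legitimate because the weak topology is metrizable on bounded sets) produces a single sequence with $\limsup_n\mathcal{J}(p_n,q_n)\le\mathcal{J}_R^0(p,q,\mu,\nu)$.

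I expect the main obstacle to be the oscillation construction together with the regularization of the defect measure: one must make $\sqrt{\bar\mu}$ admissible (bounded, with $L^2$ time derivative) while simultaneously keeping the Fisher--Rao cost continuous, preserving the trace $\bar\mu(0,\cdot)=0$, and controlling the degenerate set where $\bar\mu$ vanishes and the density $(\partial_t\bar\mu)^2/\bar\mu$ is singular. The second delicate point is the interaction between the oscillations and the projection $\pi^*_{q_n}$: since every weak limit feeding into $\mathcal{J}_R$ passes through $p_n=\pi^*_{q_n}(\tilde p_n)$, one must verify, via the fine convergence statements of Lemma~\ref{th:WeakConvergenceLemma}(5) (and their duals for $\pi^*_{q_n}$), that projecting onto $T^*_{q_n}Q$ perturbs $p_n$ and $\dot p_n$ only by strongly vanishing terms.
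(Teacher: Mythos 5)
Your overall strategy coincides with the paper's: spatial oscillations whose square realizes the defect measure at Fisher--Rao cost, a projected flow to restore the constraint to $T^*Q$, and a regularization-plus-diagonal argument. Two remarks before the gap. Your single-profile ansatz $\alpha_n=\sqrt{\bar\mu^k}\,w(nx)$ only realizes the \emph{minimal} admissible $\nu$, namely $\dot p^2+(\partial_t\sqrt{\bar\mu^k})^2$, whereas the paper's Lemma \ref{th:L1Approximation} uses the two-amplitude ansatz $(a+b\cos(2\pi nx))\sin(2\pi nx)$ with a polar phase precisely so that an \emph{arbitrary} $\nu$ satisfying the Fisher--Rao inequality can be attained; that stronger construction is what makes $\mathcal{C}_0^{0}$ dense in $\mathcal{C}_R^0$, which combined with the continuity of $\mathcal{J}_R$ is how the paper concludes. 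For the $\limsup$ inequality as stated your weaker construction does suffice, because $\int_D\eta\,\ud\nu\ge\on{FR}_\eta(\mu-p^2,\partial_t(\mu-p^2))+\int_D\eta\,\dot p^2\,\ud x\,\ud t$, so this is a legitimate simplification (it proves the theorem, though not the density statement the paper actually establishes).

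The genuine gap is in the step where you ``restore the nonlinear constraint.'' You feed $\tilde p_n=p+\alpha_n$ into the projected flow of Theorem \ref{th:Existence} and then invoke Proposition \ref{th:Boundedness}, but both results require $q(0)\in L^\infty([0,1])$ and a momentum in $L^2([0,1],L^\infty([0,1]))$ (indeed uniformly bounded in $L^\infty(D)$ for Proposition \ref{th:Boundedness}). A generic element of $\mathcal{C}^0$ only has $p\in H^1([0,1],L^2([0,1]))$ and $q_0\in L^2([0,1])$, so your assertion $\tilde p_n\in L^2([0,1],L^\infty([0,1]))$ is unjustified; moreover, as the remark following Theorem \ref{th:Existence} warns, well-posedness of the projected flow in the $L^2$ setting is expected to fail, so this is not a removable technicality. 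Regularizing the defect measure is not enough: one must also regularize the base path itself, and then the exact initial conditions $(q_0,p_0)$ demanded by $\mathcal{C}^0$ are lost. This is precisely the content of the paper's Second Step, which mollifies $\varphi$ to obtain $(p_\varepsilon,q_\varepsilon)$ with the required boundedness and then uses the controllability Lemma \ref{Th:LocalCubics} to glue a short initial arc matching $(q(0),\dot q(0))$ exactly while remaining $O(\varepsilon,\delta)$-close in $H^2$ (Remark \ref{Rem:Norm}). Without this step, or a substitute for it, your recovery sequence is constructed only for data that is already essentially bounded.
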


\begin{proof}
We first treat the case of the soft endpoint constraint.
\par
\textbf{First step:}
Since the functional $\mathcal{J}_R$ is continuous for the weak topology (Proposition \ref{th:Continuity}), it is sufficient to prove that the space $$\mathcal{C}_0^{0} \eqdef \left\{ (p,q,p^2, \dot{p}^2) \, : \, (p,q) \in \mathcal{C}^0 \right\} \subset \mathcal{C}_R^0$$ is dense in $\mathcal{C}_R^0$ for the weak topology. Since $\nu$ is a Radon measure, it is inner regular and therefore, the measure $\tilde{\nu}_t$ defined by $0$ on $[0,t[ \times [0,1]$ and on $[t,1] \times [0,1]$ by the restriction of $\nu$ to $[t,1]\times [0,1]$ weakly converges to $\nu$ when $t$ goes to $0$. By time integration, $\int_0^s \tilde{\nu}_t \ud s' $ weakly converges to $\mu$ since $\mu(0) = 0$. In particular, it implies that the desired property can be shown only on the space of measures $\mu,\nu$ that vanish on $[0,t]$.
Let us choose a time $t \in ]0,1[$ and a couple of measures $(\mu,\nu)$ whose support is contained in $]t,1]$.
Let $(p,q,\mu, \nu) \in \mathcal{C}_R^0$ and $\varphi(t)$ be the path of diffeomorphisms associated with $(p,q)$ given by Proposition \ref{Eq:PropositionChangeOfVariable}.
\par
\textbf{Second step:} We first show that we can transform the problem to a similar condition to $p(0),q(0)\in L^{\infty}([0,1])$. 
\par

Since the path $\varphi(t)$ lies in  $H^2([0,1] , \on{Diff}^2_0([0,1]))$, for any $\varepsilon > 0$, there exists a smooth kernel in space denoted $\rho_\varepsilon$, such that the curve $\varphi_\varepsilon \eqdef \rho_\varepsilon \star \varphi$ satisfies  
$$\| \varphi_\varepsilon - \varphi \|_{H^2([0,1] , \on{Diff}^2_0([0,1]))} \leq \varepsilon\,$$
and $\varphi_\varepsilon,\dot{\varphi}_\varepsilon \in C^{\infty}([0,1])$.
The corresponding estimates on $T^*Q$ are, for a positive constant $c_1$
\begin{align*}
&\| p - p_\varepsilon \|_{H^1([0,1] , L^2([0,1]))} \leq c_1\, \varepsilon \,,\\
&\| q - q_\varepsilon \|_{H^2([0,1] , L^2([0,1]))} \leq c_1\, \varepsilon\,.
\end{align*}
Now, since $H^1([0,1],H_0^2([0,1]))$ is embedded in $C^0([0,1],H_0^2([0,1]))$, the evaluation at time $t$ is continuous and therefore, for any $t \in [0,1]$, we have, for a positive constant $c_2$
\begin{align*}
&\| p(t) - p_\varepsilon(t) \|_{L^2([0,1])} \leq c_2\, \varepsilon \,,\\
&\| q(t) - q_\varepsilon(t) \|_{L^2([0,1])} \leq c_2\, \varepsilon\,.
\end{align*}
We now use Lemma \ref{Th:LocalCubics} to define a curve close to $\varphi$ in the strong topology in $H^2([0,1],H_0^2([0,1]))$ and which has the same initial conditions $(q(0),\dot{q}(0))$:
\par
For any $\delta,t >0$ sufficiently small, there exists a choice of $\varepsilon$ such that, for the given initial condition $(q_\varepsilon(t),\dot{q}_\varepsilon(t))$, Lemma \ref{Th:LocalCubics} gives existence and uniqueness of $u_1,u_2 \in T_{q(0)}Q$ satisfying $\| u_1 \|\leq \delta$ and $\| u_2 \|\leq \delta$ and $R_q(t,u_1,u_2) = (q_\varepsilon(t),\dot{q}_\varepsilon(t) = \eta(q_\varepsilon)p_\varepsilon(t))$. Then, we define the path $\tilde{q}_\varepsilon(s)$ by gluing $R_q(s,u_1,u_2)$ for $s\in[0,t]$ and $q_\varepsilon(s)$ on $s \in [t,1]$. Since the curves and their first derivatives coincide at time $t$, the curve $\tilde{q}_\varepsilon(s)$ lies in $H^2([0,1],H_0^2([0,1]))$. In addition, this path is $O(\varepsilon,\delta)$ close to the initial curve $q(t)$ since by construction it is the case on $[t,1]$ and on $[0,t]$ by the remark \ref{Rem:Norm}. In conclusion, the constructed curve $\tilde{q}_\varepsilon(s)$ satisfies the same initial conditions than $q(t)$ at time $0$ and $\tilde{p}_\varepsilon(t),\tilde{q}_\varepsilon(t)\in C^{\infty}([0,1])$, which gives in particular the desired result $\tilde{p}_\varepsilon(t),\tilde{q}_\varepsilon(t) \in L^{\infty}([0,1])$.
\par
\textbf{Third step:} 
Since $D$ is compact, every bounded balls of  $\mathcal{C} \times \mathcal{M}(D)^2$ is metrizable. We will use such a distance denoted by $\on{dist}$ on a ball that will be defined afterwards.
Let us denote by $\rho_\varepsilon$ a smoothing kernel in space, we have, for every $f \in \mathcal{D}$
\begin{equation*}
\on{FR}_f(\rho_\varepsilon \star (\mu - p^2),\partial_t (\rho_\varepsilon \star (\mu - p^2))) \leq \langle \rho_\varepsilon \star (\nu - \dot{p}^2) ,f\rangle\,.
\end{equation*} 
Define 
\begin{align*}
\mu_\varepsilon &\eqdef \rho_\varepsilon \star (\mu - p^2) + p_\varepsilon^2\,,\\
\nu_\varepsilon &\eqdef \rho_\varepsilon \star (\nu - \dot{p}^2) + \dot{p}_\varepsilon^2\,.
\end{align*}
By choosing adequatly the smoothing kernel, we shall impose $\on{dist}(\rho_\varepsilon \star (\mu - p^2), \mu - p^2) \leq \varepsilon$ and similarly, $\on{dist}(\rho_\varepsilon \star (\nu - \dot{p}^2), \nu - \dot{p}^2) \leq \varepsilon$. Moreover, by the control we have on $p_\varepsilon,q_\varepsilon$, there exists a positive constant $c'$ such that
\begin{align*}
| \langle p_\varepsilon^2 - p^2,f\rangle | &\leq c' \varepsilon |f|_\infty\,,\\
| \langle \dot{p}_\varepsilon^2 - \dot{p}^2,f\rangle | &\leq c' \varepsilon |f|_\infty\,.
\end{align*}
Therefore, for an other positive constant $c''$, it holds
$$\on{dist}\left( (p,q,\mu, \nu), (p_\varepsilon,q_\varepsilon,\mu_\varepsilon , \nu_\varepsilon  )\right)\leq c'' \varepsilon\,.$$

Now, Lemma \eqref{th:FirstStepDensity} hereafter proves that there exists an element $ (\tilde{p},\tilde{q},\tilde{p}^2, \tilde{q}^2)$ which is $\varepsilon$ close to $ (p_\varepsilon,q_\varepsilon,\mu_\varepsilon , \nu_\varepsilon  )$ in $\mathcal{C}_R^{0}$. By the triangle inequality, we have
$$\on{dist}\left( (p,q,\mu, \nu),(\tilde{p},\tilde{q},\tilde{p}^2, \tilde{q}^2)\right)\leq (1+c'') \varepsilon\,,$$
which ends proving the result. Note that all the terms are bounded in the weak-* topology so that one can fix a ball on which the previous computations can be made and on which the distance $\on{dist}$ can be defined.
\end{proof}

\begin{remark}\label{Re:OpenQuestion2}
The relaxation of the functional given in Definition \ref{Def:AccelerationFunctional} is more involved due to the boundary constraint and is out of the scope of the paper. In fact, the strategy developed in Lemma \ref{th:FirstStepDensity} for finding the relaxation domain does not apply when adding the boundary constraint at time $1$. 
\end{remark}

\begin{lemma}\label{th:L1Approximation}
Let $\nu,\mu \in L^1(D)$ and $\mu \in W^{1,1}(D)$ that satisfy the constraint  \eqref{eq:InequalityCondition} and such that $\mu(t=0,x) = 0$ a.e in $x$. Then there exists $p_n \in H^1([0,1],L^2(D))$ such that $p_n\rightharpoonup 0$, $p_n^2 \rightharpoonup \mu$ and $\dot{p}_n^2 \rightharpoonup \nu$ and $p_n(0) = 0$.
\par
If, in addition, $\mu \in L^\infty(D)$, then we also have that $p_n \in L^\infty(D)$ and is uniformly bounded (w.r.t. $n$) in $L^\infty(D)$. 
\end{lemma}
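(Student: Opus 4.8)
The plan is to construct the oscillating sequence explicitly, guided by the heuristic that the weak limit $\mu$ of $p_n^2$ must be produced by oscillations of amplitude $\sqrt{\mu}$, while the \emph{excess} $\nu-(\partial_t\sqrt\mu)^2$ of the target $\nu$ must come from a genuine temporal modulation. The hypothesis \eqref{eq:InequalityCondition}, in its pointwise form $(\partial_t\sqrt\mu)^2\le\nu$, enters precisely here: it guarantees that this excess is nonnegative, so that its square root is well defined (and, since $\nu\in L^1$, that $\partial_t\sqrt\mu\in L^2$). The decisive idea is to decouple two tasks that are usually in tension: a \emph{fast spatial} oscillation supplies the weak convergence $p_n\rightharpoonup0$ and the averaging $p_n^2\rightharpoonup\mu$ while contributing nothing to $\dot p_n^2$ (it carries no time dependence), whereas a \emph{finite-frequency temporal} phase supplies exactly the missing derivative energy. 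Concretely I would set
\begin{equation*}
\beta(t,x)=\int_0^t\sqrt{\frac{\nu-(\partial_t\sqrt\mu)^2}{\mu}}\,\ud s,\qquad p_n(t,x)=\sqrt{2\mu(t,x)}\,\cos\!\big(nx+\beta(t,x)\big).
\end{equation*}

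Granting enough regularity, the three convergences follow from the elementary averaging $\cos^2(nx+\beta)\rightharpoonup\tfrac12$, $\sin^2\rightharpoonup\tfrac12$ and $\cos\sin\rightharpoonup0$ as $n\to\infty$. Indeed $p_n^2=2\mu\cos^2(nx+\beta)\rightharpoonup\mu$ and $p_n\rightharpoonup0$. Differentiating in time gives $\partial_t p_n=\sqrt2\,\partial_t\sqrt\mu\,\cos(nx+\beta)-\sqrt2\,\sqrt{\nu-(\partial_t\sqrt\mu)^2}\,\sin(nx+\beta)$, the point being the cancellation $\sqrt{2\mu}\,\partial_t\beta=\sqrt2\,\sqrt{\nu-(\partial_t\sqrt\mu)^2}$, which is square integrable even though $\beta$ itself need not be, so that $p_n\in H^1$. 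Squaring and averaging yields $\dot p_n^2\rightharpoonup(\partial_t\sqrt\mu)^2+(\nu-(\partial_t\sqrt\mu)^2)=\nu$, the cross term dropping by orthogonality. The initial condition is automatic, since $\mu(t=0,\cdot)=0$ forces $p_n(0,\cdot)=0$, and the $L^\infty$ addendum is immediate from $|p_n|\le\sqrt{2\|\mu\|_\infty}$.

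The analytical content is to make these formal averages rigorous, since $\mu,\nu$ are merely integrable and the averaging identity against an $L^1$ weight can fail without some regularity of the phase. I would first prove the statement for smooth, strictly positive data, where $\beta$ is smooth, the product rule is licit, and the oscillation lemmas apply against test functions in $C^0(D)$. I would then regularize a general pair $(\mu,\nu)$ by mollification, invoking Lemma \ref{th:RegularisationOfFirstInequality} to preserve the constraint \eqref{eq:InequalityCondition}, together with a vanishing shift ensuring strict positivity, and recover the general case by a diagonal extraction, controlling the errors in the bounded-Lipschitz distance exactly as in the proof of Proposition \ref{Th:TimeRegularity}.

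I expect the main obstacle to be the degenerate set $\{\mu=0\}$: there the amplitude of $p_n$ collapses, so the construction above produces $\dot p_n^2\to0$, whereas \eqref{eq:InequalityCondition} does \emph{not} force $\nu$ to vanish on this set. The remedy is to realize the residual mass $\nu\,\mathbf{1}_{\{\mu=0\}}$ by a separate vanishing-amplitude, diverging-frequency temporal oscillation, of the type $n^{-1}\sqrt{2\nu}\,\sin(nt)$, whose square tends to $0$ (consistent with $\mu=0$) while its time-derivative squared averages to $\nu$. Adding this corrector does not perturb the other limits because the spatial oscillation $nx$ and the temporal oscillation $nt$ are incoherent, so every cross term vanishes weakly by Riemann--Lebesgue in $x$. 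Assembling the two contributions through smooth cutoffs and folding the whole construction into the regularization and diagonalization scheme then completes the proof.
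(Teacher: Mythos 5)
Your construction is essentially the paper's: the paper writes $p_n=(a+b\cos(2\pi nx))\sin(2\pi nx)$ and passes to polar coordinates $(a/\sqrt2,b/2)=re^{i\theta}$ with $r=\sqrt{\mu}$ and $\dot\theta^2=(\nu-(\partial_t\sqrt{\mu})^2)/\mu$, which is exactly your amplitude--phase ansatz $\sqrt{2\mu}\cos(nx+\beta)$ written with two spatial harmonics instead of a phase shift; the averaging, the $H^1$ bound via $(\partial_t\sqrt{\mu})^2\le\nu\in L^1$, and the treatment of the initial and $L^\infty$ conditions coincide. The one genuine difference is your handling of the degenerate set $\{\mu=0\}$: the paper asserts that $(\partial_t\sqrt\mu)^2\le\nu$ is sufficient for solving the system \eqref{eq:SystemPolar} without comment, even though $r=\sqrt\mu=0$ forces $a=b=0$ and hence $\dot a=\dot b=0$ wherever $\mu$ vanishes on a time interval, so a separate vanishing-amplitude temporal oscillation of the kind you propose (or an a.e.-positivity reduction by regularization) is indeed needed there --- your proposal is, if anything, more complete on this point than the printed proof.
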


%
\begin{lemma}\label{th:FirstStepDensity}
Let $p(0),q(0) \in L^\infty([0,1])$ and $(p(t),q(t))$ be a solution to $\dot{q} = \eta(q)\pi_q^*(p) = \eta(q)p$ such that $(p,q) \in H^1([0,1],L^\infty([0,1])) \times H^2([0,1],L^\infty([0,1]))$. For $(p,q,\mu,\nu) \in \mathcal{C}_R \cap (\mathcal{C}^0 \times L^\infty(D) \times L^1(D))$, there exists $(p_n,q_n) \in \mathcal{C}^{0}$ such that $(p_n,q_n,p_n^2,\dot{p}_n^2) \rightharpoonup (p,q,\mu,\nu)$. 
\end{lemma}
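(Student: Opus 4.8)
The plan is to realize the prescribed defect measures by superimposing on the given curve $(p,q)$ a family of rapidly oscillating perturbations supplied by Lemma \ref{th:L1Approximation}, and then to re-project the perturbed momentum onto the cotangent bundle so that the resulting pair lands in $\mathcal{C}^0$. First I would pass to the defect measures $\tilde\mu \eqdef \mu - p^2$ and $\tilde\nu \eqdef \nu - \dot p^2$. Since $p\in H^1([0,1],L^\infty)\hookrightarrow L^\infty(D)$, we have $\tilde\mu\in L^\infty(D)$ and $\tilde\nu\in L^1(D)$, both nonnegative by the constraints $\mu\ge p^2$, $\nu\ge\dot p^2$ defining $\mathcal{C}_R$. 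The boundary condition $\mu(0)=p(0)^2$ gives $\tilde\mu(t{=}0)=0$; the Fisher--Rao constraint $\on{FR}(\mu-p^2,\partial_t(\mu-p^2))\preccurlyeq\nu-\dot p^2$ is, by Proposition \ref{Th:ShortFormulation}, exactly the inequality \eqref{eq:InequalityCondition} for $(\tilde\mu,\tilde\nu)$; and the pointwise bound $|\partial_t\tilde\mu|\le 2\sqrt{\tilde\mu}\sqrt{\tilde\nu}$ combined with $\tilde\mu,\tilde\nu\in L^1$ yields, via Cauchy--Schwarz, $\|\partial_t\tilde\mu\|_{L^1}\le 2\|\tilde\mu\|_{L^1}^{1/2}\|\tilde\nu\|_{L^1}^{1/2}<\infty$, so $\tilde\mu\in W^{1,1}(D)$. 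Thus $(\tilde\mu,\tilde\nu)$ satisfies all hypotheses of Lemma \ref{th:L1Approximation}.

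Applying Lemma \ref{th:L1Approximation} produces $\alpha_n\in H^1([0,1],L^2([0,1]))$, uniformly bounded in $L^\infty(D)$ (because $\tilde\mu\in L^\infty$), with $\alpha_n\rightharpoonup 0$, $\alpha_n^2\rightharpoonup\tilde\mu$, $\dot\alpha_n^2\rightharpoonup\tilde\nu$ and $\alpha_n(0)=0$. As $\dot\alpha_n$ is bounded in $L^2(D)$ while $\alpha_n\rightharpoonup 0$, the distributional limit forces $\dot\alpha_n\rightharpoonup 0$, so $\alpha_n\rightharpoonup 0$ in $H^1([0,1],L^2)$. I then set $\hat p_n\eqdef p+\alpha_n\in L^2([0,1],L^\infty)$; Theorem \ref{th:Existence} gives a unique $q_n\in H^1([0,1],L^\infty)$ solving $\dot q_n=\eta(q_n)\pi^*_{q_n}(\hat p_n)$ with $q_n(0)=q(0)$, and I define $p_n\eqdef\pi^*_{q_n}(\hat p_n)$. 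By construction $p_n(t)\in T^*_{q_n(t)}Q$ and $\dot q_n=\eta(q_n)p_n$, while $q_n(0)=q(0)$ and $p_n(0)=\pi^*_{q(0)}(p(0)+\alpha_n(0))=\pi^*_{q(0)}(p(0))=p(0)$, using $\alpha_n(0)=0$ and $p(0)\in T^*_{q(0)}Q$; hence $(p_n,q_n)\in\mathcal{C}^0$.

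Three of the four convergences follow from the weak-continuity toolbox. Since $\hat p_n$ is bounded in $L^\infty(D)$ and $\hat p_n\rightharpoonup p$ in $L^2(D)$, Proposition \ref{th:Boundedness} gives $q_n\rightharpoonup q$ in $H^1([0,1],L^2)$ (the limiting flow being $q$ by uniqueness), so Lemma \ref{th:WeakConvergenceLemma} applies; note also $\hat p_n\rightharpoonup p$ in $H^1$, which is what item (4) requires. Writing $p_n=\pi^*_{q_n}(p)+\pi^*_{q_n}(\alpha_n)$, item (3) gives $\pi^*_{q_n}(p)\to\pi^*_q(p)=p$ strongly, and item (5) gives $\pi^*_{q_n}(\alpha_n)-\alpha_n\to 0$ strongly; hence $p_n=p+\alpha_n+o_{L^2}(1)$, so $p_n\rightharpoonup p$ and $p_n^2=p^2+2p\alpha_n+\alpha_n^2+o_{L^1}(1)\rightharpoonup p^2+\tilde\mu=\mu$, the cross term vanishing since $p\in L^\infty$ and $\alpha_n\rightharpoonup 0$.

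The remaining convergence $\dot p_n^2\rightharpoonup\nu$ is the \textbf{main obstacle}. Differentiating $p_n=\pi^*_{q_n}(\hat p_n)$ gives
\[
\dot p_n=\big[\pi^*_{q_n}(\dot p)+(\tfrac{d}{dt}\pi^*_{q_n})(p)\big]+(\tfrac{d}{dt}\pi^*_{q_n})(\alpha_n)+\pi^*_{q_n}(\dot\alpha_n).
\]
By items (3)--(4) the bracket converges strongly to $\tfrac{d}{dt}[\pi^*_q(p)]=\dot p$, and by the adjoint analogue of item (5) the term $(\tfrac{d}{dt}\pi^*_{q_n})(\alpha_n)\to 0$ strongly. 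Writing $\pi^*_{q_n}(\dot\alpha_n)=\dot\alpha_n+w_n$ with $w_n\eqdef(\pi^*_{q_n}-\on{id})(\dot\alpha_n)$, we get $\dot p_n=\dot p+\dot\alpha_n+w_n+o_{L^2}(1)$, and expanding the square while using $\dot\alpha_n\rightharpoonup 0$, $\dot\alpha_n^2\rightharpoonup\tilde\nu$ and the strong convergence of the remaining factors would give $\dot p_n^2\rightharpoonup\dot p^2+\tilde\nu=\nu$ \emph{provided} $w_n\to 0$ strongly in $L^2(D)$. This is the heart of the difficulty. By \eqref{eq:projectionDuale}, $\pi^*_{q_n}-\on{id}$ is the rank-two operator $z\mapsto-\begin{bmatrix}1&\varphi_n\end{bmatrix}H_2^{-1}\begin{bmatrix}\langle z,1\rangle_{\eta_n}\\\langle z,\varphi_n\rangle_{\eta_n}\end{bmatrix}$, so $\|w_n\|_{L^2(D)}$ is controlled by the $L^2([0,1])$ norms of the spatial averages $t\mapsto\int_0^1\dot\alpha_n\,\eta_n\,\ud x$ and $t\mapsto\int_0^1\dot\alpha_n\,\varphi_n\eta_n\,\ud x$. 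The delicate point I expect to be genuinely hard is that these averages must tend to $0$ \emph{strongly} in $L^2([0,1])$: the averaging operator is local in time, hence noncompact, so the bare weak convergence $\dot\alpha_n\rightharpoonup 0$ is insufficient — a purely time-oscillating family would leave a spurious positive defect in $w_n^2$ and the conclusion would fail. The resolution must exploit the explicit, spatially oscillating structure of the sequence built in Lemma \ref{th:L1Approximation}: pairing the fast spatial oscillation of $\dot\alpha_n$ against the smooth weights $\eta_n$ and $\varphi_n\eta_n$ produces Riemann--Lebesgue cancellation in $x$, uniform enough in $t$ to force $w_n\to 0$ strongly and thereby close the argument.
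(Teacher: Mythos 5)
Your proposal follows essentially the same route as the paper's proof: perturb $p$ by the oscillating sequence $\alpha_n$ of Lemma \ref{th:L1Approximation} applied to the defect measures $\mu-p^2,\nu-\dot p^2$, solve for $q_n$ via Theorem \ref{th:Existence}, set $p_n=\pi^*_{q_n}(p+\alpha_n)$, and pass to the limit using Lemma \ref{th:WeakConvergenceLemma} and Remark \ref{rem:SimpleLemma}. The step you single out as the main obstacle — strong $L^2(D)$ convergence of $(\pi^*_{q_n}-\on{id})(\dot\alpha_n)$, which item (5) of Lemma \ref{th:WeakConvergenceLemma} does not literally cover since $\dot\alpha_n$ is only weakly convergent in $L^2(D)$ and not in $H^1$ in time — is a genuine subtlety that the paper's own proof passes over, and the resolution you indicate (Riemann--Lebesgue cancellation of the explicit oscillations $\sin(2\pi nx)$, $\sin(4\pi nx)$ against the $C^0$-convergent weights $\eta_n$ and $\varphi_n\eta_n$, with the spatial averages dominated in time by $\|\dot a(t,\cdot)\|_{L^2}+\|\dot b(t,\cdot)\|_{L^2}$) is exactly what closes the argument.
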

The proof of the three previous lemmas are given in Appendix \ref{Ap:Proofs}.
\ifcomments
\begin{remark}[Open question]
We do not know if the previous result holds when adding the boundary constraint at time $1$, namely, $(p_n(1),q_n(1)) = (p(1),q(1))$ and $\mu(1)=p(1)^2$. This would answer to the open question in remark \ref{Re:OpenQuestion1} and \ref{Re:OpenQuestion1}.
\end{remark}
\fi

\section{Variational study of the relaxed acceleration}\label{Sec:MainTheorem}
Since we aim at minimizing the relaxed functional $\mathcal{J}_R$, we give a first simple reduction by minimizing over $\nu$. The constraint $\on{FR}(\mu - p^2,\partial_t(\mu - p^2)) \preccurlyeq \nu - \dot{p}^2$ gives that the first term $\int_D\! \eta \ud \nu$ is lower bounded by $ \on{FR}_\eta(\mu - p^2,\partial_t(\mu - p^2)) + \eta \dot{p}^2$ and the formula \eqref{eq:FirstVariationalReduction} follows. 
Minimizing $\mathcal{J}_R$ over $\nu$ when $(p,q,\mu)$ are fixed, gives
\begin{multline}\label{eq:FirstVariationalReduction}
\min_{\nu} \mathcal{J}_R = \on{FR}_\eta(\mu - p^2,\partial_t(\mu - p^2))  \\+ \!\int_{D} \! \eta(q)  \Big(\dot{p}  + \mathcal{U}(\mu,q) -  \pi_1(p,q) - \pi_2(\mu,q)\Big)^2 \! \ud x \!\ud t + P(p(1),q(1))\,.
\end{multline}
This way of writing the functional was used for proving the lower semicontinuity and since it is now proven, we can use a slightly simpler and more geometric formulation.
Using the defect measure $\Delta \eqdef \mu - p^2$ and recognizing the acceleration of the original curve $(p,q)$, the functional can be written as $\mathcal{F}(\Delta, p, q) \eqdef \min_{\nu} \mathcal{J}_R(\mu, \nu,p,q)$ with
\begin{multline}\label{eq:ReducedFunctional}
\mathcal{F}(\Delta, p, q) =  \on{FR}_\eta(\Delta,\partial_t \Delta)  +  \int_0^1 \left\| \left(\frac{D}{Dt}\dot{q}\right)^{\flat} + \pi_q^*(\mathcal{U}(\Delta,q)) \right\|^2_q\,\ud t + P(p(1),q(1))\,.
\end{multline}
Note that the boundary constraint on $\mu$ transfers to $\Delta$ as $\Delta_{t = 0} = 0$ (the disintegration of $\Delta$ at time $t=0$), which will be written $\Delta \in \mathcal{M}_0(D)$. Therefore, we are interested in the minimization of the functional $\mathcal{F}$ with respect to $\Delta$ taking values in $\mathcal{M}_0(D)$ space of measures 

The previous functional can be understood as follows: The original acceleration functional can be possibly made lower using the term $\mu$ which is paid at the price $\on{FR}_\eta(\mu - p^2,\partial_t(\mu -p^2))$. In particular, when the path $(p(t),q(t)) \in T^*Q$ is fixed, one can look at the minimization over $\Delta$ of the previous functional. With respect to $\Delta$ the previous functional is convex and thus any critical point w.r.t. $\Delta$ is a global minimizer. 
A priori, the functional $\mathcal{F}$ is not strictly convex which is due to the fact the Fisher-Rao functional is one homogeneous. Moreover the projection operator $\pi_q^*$ has a non trivial kernel and therefore the strict convexity of the norm squared does not help. However, we do not need strict convexity to conclude to the following interesting property. The result will be given by the following simple lemma whose proof is postponed in Appendix \ref{Ap:Proofs}.

\begin{lemma}\label{Th:LemmaSimpleConvexity}
Let $X$ be a Banach space, $f:X\mapsto \R_+$ be a convex function and $x_0 \in X$ such that $f^{-1}(\{f(x_0)\}) = \{x_0\}$. Let $g:X \mapsto \R$ be a strictly convex function and $A$ be a linear operator on $X$, then if $x_0$ is a minimum for the function $f+g \circ A$, it is the unique minimum.
\end{lemma}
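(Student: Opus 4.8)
The plan is to argue by contradiction using the classical midpoint convexity trick, the point being that neither summand of the objective is individually strictly convex ($f$ need not be, and $g\circ A$ fails to be whenever $A$ has a kernel), so the uniqueness must come from combining the strict convexity of $g$ with the nonstandard level-set hypothesis $f^{-1}(\{f(x_0)\}) = \{x_0\}$. Write $\Phi \eqdef f + g\circ A$, which is convex since $f$ is convex and $g\circ A$ is convex ($g$ convex, $A$ linear), and let $m \eqdef \Phi(x_0)$ be the minimal value.

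First I would suppose, aiming for a contradiction, that there is some $x_1 \neq x_0$ with $\Phi(x_1) = m$ as well. Setting $x_* \eqdef \tfrac12(x_0 + x_1)$, convexity of $f$ and of $g\circ A$ gives
\begin{equation*}
\Phi(x_*) \leq \tfrac12\big(\Phi(x_0) + \Phi(x_1)\big) = m\,.
\end{equation*}
Since $m$ is the minimum, $\Phi(x_*) = m$, and therefore both convexity inequalities must in fact be equalities: $f(x_*) = \tfrac12(f(x_0)+f(x_1))$ and $g(Ax_*) = \tfrac12(g(Ax_0)+g(Ax_1))$.

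The second equality reads $g\big(\tfrac12(Ax_0 + Ax_1)\big) = \tfrac12\big(g(Ax_0) + g(Ax_1)\big)$. Here I would invoke the strict convexity of $g$: an equality in Jensen's inequality for a strictly convex function forces the two arguments to coincide, so $A x_0 = A x_1$, and in particular $g(Ax_0) = g(Ax_1)$. Subtracting this from the equality $\Phi(x_0) = \Phi(x_1) = m$ then yields $f(x_0) = f(x_1)$. Finally, the hypothesis $f^{-1}(\{f(x_0)\}) = \{x_0\}$ shows that $x_1 \in f^{-1}(\{f(x_0)\}) = \{x_0\}$, hence $x_1 = x_0$, contradicting $x_1 \neq x_0$. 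This proves that $x_0$ is the unique minimizer.

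There is no genuinely hard step here; the whole content lies in correctly routing the strict convexity. The one subtlety worth flagging is that strict convexity of $g$ is exploited only on the image points $Ax_0, Ax_1$ (not on $x_0,x_1$ themselves), which is exactly what decouples the two summands and reduces the problem to the level-set uniqueness of $f$. I would also note in passing that convexity of $\Phi$ makes the distinction between local and global minimum irrelevant, so the statement holds whether $x_0$ is assumed to be a global or merely a local minimizer.
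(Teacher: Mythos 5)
Your proof is correct and follows essentially the same route as the paper's: a midpoint convexity argument in which strict convexity of $g$ forces $Ax_0 = Ax_1$, after which $f(x_0)=f(x_1)$ and the level-set hypothesis $f^{-1}(\{f(x_0)\})=\{x_0\}$ finishes the job. The only difference is organizational (the paper first isolates the claim $Ax_0 = Ax$ and derives the contradiction from its negation), which is immaterial.
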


\begin{proposition}\label{Th:WeakConvexity}
Let $(p,q) \in \mathcal{C}^0$ be a path in $T^*Q$. The minimization of $\mathcal{F}$ over $\Delta$ on $\mathcal{M}_0(D)$ is a convex minimization problem and if the functional $\mathcal{F}$ has a minimum at $\Delta=0$ then it is the unique minimum. 
\end{proposition}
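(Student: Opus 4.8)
The plan is to read off $\mathcal{F}$, viewed as a function of $\Delta$ alone (the path $(p,q)\in\mathcal{C}^0$ being fixed, so that $\eta$, $\pi_q^*$, the metric $\|\cdot\|_q$, the term $c \eqdef \left(\frac{D}{Dt}\dot{q}\right)^\flat$ and the constant $P(p(1),q(1))$ are all frozen), as a sum $f + g\circ A$ up to an additive constant, where $f(\Delta) \eqdef \on{FR}_\eta(\Delta,\partial_t\Delta)$, the map $A(\Delta) \eqdef \pi_q^*(\mathcal{U}(\Delta,q))$ is linear (because $\mathcal{U}(\cdot,q)$ and $\pi_q^*$ are), and $g(w) \eqdef \int_0^1 \|c + w\|_q^2\,\ud t$. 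With this decomposition the convexity statement is immediate: $\mathcal{M}_0(D)$ is convex (it is the cone of positive measures intersected with the linear constraint $\Delta_{t=0}=0$), the Fisher--Rao term is convex by Definition \ref{def:FR} and is nonnegative, and $g\circ A$ is convex as the composition of a convex function with a linear map. The real content, uniqueness, I would obtain from Lemma \ref{Th:LemmaSimpleConvexity} applied with $x_0 = 0$ and this $f$, $g$, $A$.

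First I would verify the routine hypotheses of Lemma \ref{Th:LemmaSimpleConvexity}. Linearity of $A$ and convexity (and nonnegativity) of $f$ are clear, and the strict convexity of $g$ is a short computation: for $w_0\neq w_1$ the strict convexity of the Hilbert norm $\|\cdot\|_q$ gives a strict inequality on a set of positive measure in $t$, hence $g(\tfrac{w_0+w_1}{2}) < \tfrac12 g(w_0)+\tfrac12 g(w_1)$. Note that the non-triviality of the kernel of $\pi_q^*$ is exactly why one cannot hope $g\circ A$ itself to be strictly convex, and why the splitting provided by Lemma \ref{Th:LemmaSimpleConvexity} is needed here.

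The heart of the argument, and the step I expect to be the main obstacle, is checking the level-set condition $f^{-1}(\{f(0)\})=\{0\}$, i.e.\ that $\on{FR}_\eta(\Delta,\partial_t\Delta)=0$ forces $\Delta=0$. Here I would use that $\eta$ is continuous and strictly positive on $D$, so that vanishing of the integral in \eqref{Eq:FisherRaoDefinition} forces the integrand $r(\rho_\Delta,\rho_{\partial_t\Delta})$ to vanish almost everywhere; since $r(x,y)=0$ only when $y=0$, this yields $\partial_t\Delta=0$. To convert this into $\Delta=0$ I would invoke Proposition \ref{Th:TimeRegularity}: finiteness of $\on{FR}_\eta(\Delta,\partial_t\Delta)$ makes $t\mapsto\Delta_t$ a (Hölder) continuous path of measures, which $\partial_t\Delta=0$ forces to be constant, and the boundary condition $\Delta\in\mathcal{M}_0(D)$, i.e.\ $\Delta_{t=0}=0$, then gives $\Delta\equiv 0$. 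With this in hand all hypotheses of Lemma \ref{Th:LemmaSimpleConvexity} hold, so if $\mathcal{F}$ attains its minimum at $\Delta=0$ then $0$ is the unique minimizer, which completes the proof.
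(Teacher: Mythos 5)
Your proof is correct and follows essentially the paper's own route: the same decomposition $f+g\circ A$ with $f=\on{FR}_\eta(\cdot,\partial_t\cdot)$ and $A$ the affine/linear map $\Delta\mapsto\pi_q^*(\mathcal{U}(\Delta,q))$, the same appeal to Lemma \ref{Th:LemmaSimpleConvexity}, and the same identification of the crux, namely that $\on{FR}_\eta(\Delta,\partial_t\Delta)=0$ together with $\Delta_{t=0}=0$ forces $\Delta=0$. The only (harmless) divergence is in that last step: the paper mollifies $\Delta$ and uses one-homogeneity and convexity of $\on{FR}$ together with $\rho\star\eta\le C\eta$ to reduce to the smooth case, whereas you argue directly that the nonnegative integrand $r$ must vanish $\lambda$-a.e., hence $\partial_t\Delta=0$, and then invoke the continuity of $t\mapsto\Delta_t$ from Proposition \ref{Th:TimeRegularity} — both arguments are valid.
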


\begin{proof}
This is the direct application of the previous lemma if we show that the equality $\on{FR}_\eta(\Delta,\partial_t \Delta)=0$ implies that $\Delta \equiv 0$ since $\Delta(0) =0$. Suppose on the contrary that $\Delta \neq 0$, then, for a regularizing kernel $\rho$ we have that $\rho \star \Delta \neq 0$ and by convexity, 
\begin{equation*}
\on{FR}_\eta(\rho \star \Delta,\partial_t(\rho \star \Delta)) \leq \on{FR}_{\rho \star \eta}(\Delta,\partial_t\Delta)\,,
\end{equation*}
moreover, by continuity of $\eta$, there exists a constant $C>0$ such that $\rho \star \eta \leq C \eta$ and therefore $\on{FR}_{\rho \star \eta}(\Delta,\partial_t\Delta) \leq C \on{FR}_{\eta}(\Delta,\partial_t\Delta)=0$.
Thus, $\on{FR}_\eta(\rho \star \Delta,\partial_t(\rho \star \Delta))=0$ which directly implies that $\rho \star \Delta=0$ which is a contradiction.
\end{proof}

In the next theorem, we make explicit the first-order optimality condition.

\begin{theorem}\label{Th:FirstOrderCondition}
Let $(p,q) \in \mathcal{C}^0$ be a path in $T^*Q$ and consider the minimization of $\mathcal{F}$ over $\Delta$  on the relaxed set $\mathcal{C}_R^{0}$. A necessary and sufficient condition for optimality at $\Delta$ is that 
both following conditions are satisfied:
\\
For any $\mu \in \mathcal{M}_0(D)$ such that $\partial_t \mu \in \mathcal{M}(D)$,
\begin{equation}\label{Eq:FirstInequalityCharacterization}
\on{FR}_{\eta}(\mu,\partial_t \mu) \geq \langle \mu,  -w \rangle\,,
\end{equation}
 \\
and the equality
 \begin{equation*}
\on{FR}_{\eta}(\Delta,\partial_t \Delta) +  \langle \Delta,  w \rangle = 0\,,
 \end{equation*}
 where \begin{equation*}
w =  \eta \int_0^x \left(\frac{D}{Dt}\dot{q} + \pi^*(\mathcal{U}(\Delta,q)) \right) \ud y\,.
\end{equation*}

\end{theorem}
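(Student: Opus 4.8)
The plan is to exploit the convex structure already isolated in Proposition \ref{Th:WeakConvexity}: with $(p,q)$ fixed, the map $\Delta \mapsto \mathcal F(\Delta,p,q)$ defined in \eqref{eq:ReducedFunctional} is the sum of the positively one-homogeneous convex functional $\Phi(\Delta) \eqdef \on{FR}_\eta(\Delta,\partial_t\Delta)$ and the convex quadratic term
$$Q(\Delta) \eqdef \int_0^1 \left\| \left(\tfrac{D}{Dt}\dot q\right)^\flat + \pi_q^*(\mathcal U(\Delta,q)) \right\|_q^2\ud t\,,$$
the penalization $P(p(1),q(1))$ being constant in $\Delta$. Here $\Phi$ is one-homogeneous because $\on{FR}_\eta$ is and $\Delta \mapsto (\Delta,\partial_t\Delta)$ is linear. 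Since $Q$ is finite, continuous and everywhere Gâteaux differentiable, while $\Phi$ is convex, proper and lower semicontinuous, the Moreau--Rockafellar sum rule applies and $\Delta$ minimizes $\mathcal F(\cdot,p,q)$ over $\mathcal M_0(D)$ if and only if $-DQ(\Delta) \in \partial\Phi(\Delta)$. The proof then reduces to two tasks: identifying the representative of $DQ(\Delta)$ with the function $w$, and unfolding the inclusion $-w \in \partial\Phi(\Delta)$.

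First I would compute the Gâteaux derivative of $Q$. Writing $r \eqdef (\tfrac{D}{Dt}\dot q)^\flat + \pi_q^*(\mathcal U(\Delta,q))$ for the relaxed-acceleration covector and recalling that $\|\cdot\|_q$ is the cometric norm $\|m\|_q^2 = \int_0^1 \eta\, m^2\ud x$, linearity of $\Delta \mapsto \mathcal U(\Delta,q)$ gives
$$DQ(\Delta)(h) = 2\int_0^1 \int_0^1 \eta\, r\, \pi_q^*(\mathcal U(h,q))\,\ud x\,\ud t\,.$$
The representative of this linear form on measures is then obtained by transposing the two operators involved. On one hand, the $L^2$-adjoint of $\pi_q^*$ is $\pi_q$, and since $\pi_q$ is the $G$-orthogonal projection with $G$ equal to multiplication by $1/\eta$ one has the self-adjointness identity $\eta\,\pi_q^* = \pi_q\,\eta$; on the other hand, the $L^2$-adjoint of $f \mapsto \mathcal U(f,q) = \tfrac12\int_x^1 \eta f\,\ud y$ is $g \mapsto \tfrac12\,\eta\int_0^x g\,\ud y$, which accounts for the $\eta\int_0^x(\cdots)$ structure of $w$. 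The factor $2$ produced by the square cancels the $\tfrac12$ in $\mathcal U$, and after simplifying $\pi_q(\eta r)$ by means of the self-adjointness identity (using also that $\tfrac{D}{Dt}\dot q \in T_qQ$ is fixed by $\pi_q$) one is left exactly with $DQ(\Delta)(h) = \langle h, w\rangle$ for the function $w$ of the statement, the pairing being integration of the continuous function $w$ against the measure $h$.

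It then remains to translate $-w \in \partial\Phi(\Delta)$. For this I would use that $\Phi$ is convex and positively one-homogeneous, so that, exactly as in the Fenchel--Young argument of Proposition \ref{Th:SubDifOfFR}, an element $g$ lies in $\partial\Phi(\Delta)$ if and only if the support inequality $\Phi(\mu) \geq \langle \mu, g\rangle$ holds for every competitor $\mu$ together with the saturation equality $\Phi(\Delta) = \langle \Delta, g\rangle$. Indeed, by one-homogeneity the conjugate $\Phi^*$ takes only the values $0$ and $+\infty$, so $g \in \partial\Phi(\Delta)$ amounts to $\Phi^*(g)=0$ (the support inequality) plus the Young equality. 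Taking $g=-w$ and recalling that $\Phi(\mu)=\on{FR}_\eta(\mu,\partial_t\mu)$ is finite only when $\mu \in \mathcal M_0(D)$ and $\partial_t\mu$ is a measure (the inequality being vacuous otherwise) yields precisely the inequality \eqref{Eq:FirstInequalityCharacterization} for all such $\mu$, and the saturation becomes $\on{FR}_\eta(\Delta,\partial_t\Delta) + \langle\Delta,w\rangle = 0$, which is the second asserted condition.

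The main obstacle I anticipate is the derivative identification rather than the convex-analytic translation. One must justify that $Q$ is genuinely differentiable on $\mathcal M_0(D)$ endowed with the weak topology in which $\mathcal U(\cdot,q)$ and $\pi_q^*$ act continuously; this rests on Lemma \ref{th:WeakConvergenceLemma}, which controls $\eta(q)$, $\varphi(q)$ and $\pi_q^*$ and thus guarantees that $r$ and $w$ are continuous functions on $D$, so that the pairings $\langle \mu, w\rangle$ and $\langle \Delta, w\rangle$ are well defined for all admissible $\mu$ and finite whenever $\Phi(\mu)<\infty$. The transposition of $\mathcal U$ and the metric self-adjointness identity $\eta\,\pi_q^* = \pi_q\,\eta$ are the two algebraic facts doing the real work, and both must be checked carefully against the normalizations of the projection formulas of Section \ref{Sec:GeodesicEquation} to recover the exact expression for $w$ displayed in the statement.
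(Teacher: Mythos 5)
Your proposal follows essentially the same route as the paper: decompose $\mathcal F(\cdot,p,q)$ into the one-homogeneous convex part $\on{FR}_\eta(\Delta,\partial_t\Delta)$ (extended by $+\infty$ off its domain) plus the smooth quadratic part, invoke the sum rule for the subdifferential of a convex plus differentiable function (the paper cites \cite[Proposition 5.6]{EkelandTemam} where you cite Moreau--Rockafellar), and unfold the resulting inclusion via the support-inequality-plus-saturation characterization of the subdifferential of a positively one-homogeneous functional, which is exactly Lemma \ref{Th:GeneralLemma}. Your explicit transposition of $\mathcal U$ and the identity $\eta\,\pi_q^*=\pi_q\,\eta$ to identify $D\mathcal G(\Delta)$ with $w$ is a welcome elaboration of a step the paper only asserts, and the argument is correct.
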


The proof of this theorem relies on a well-known lemma  for one-homogeneous functionals, whose proof is given in Appendix \ref{Ap:Proofs}.
\begin{lemma}\label{Th:GeneralLemma}
Let $X$ be a Banach space, $X^*$ be its topological dual and $f: X \mapsto \R \cup \{ +\infty \}$ be a convex function which is positively one-homogeneous. That is $f(\lambda x) = \lambda f(x)$ for every $\lambda \geq 0$. Then, 
$v \in \partial f(x)$ if and only if the following two conditions hold
\\
(1) $f(y) \geq \langle y , v \rangle$ for every $y \in X$,
\\ (2) $f(x) = \langle x , v \rangle$.
\end{lemma}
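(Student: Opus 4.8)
The plan is to work directly from the definition of the subdifferential: $v \in \partial f(x)$ means that $f(y) \ge f(x) + \langle y - x, v\rangle$ for every $y \in X$. The entire argument rests on exploiting the positive one-homogeneity of $f$ along the ray through $x$, and the characterization turns out to be essentially algebraic, requiring no topological hypotheses beyond what is already built into the subdifferential.

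First I would prove the forward implication. Assuming $v \in \partial f(x)$, I would test the subgradient inequality at $y = \lambda x$ for $\lambda \ge 0$. Using $f(\lambda x) = \lambda f(x)$, this reduces to $(\lambda - 1) f(x) \ge (\lambda - 1)\langle x, v\rangle$. Choosing $\lambda > 1$ and dividing by $\lambda - 1 > 0$ gives $f(x) \ge \langle x, v\rangle$, while choosing $0 \le \lambda < 1$ and dividing by $\lambda - 1 < 0$ reverses the inequality and gives $f(x) \le \langle x, v\rangle$. Together these force the equality $f(x) = \langle x, v\rangle$, which is condition (2). Substituting this back into the subgradient inequality $f(y) \ge f(x) + \langle y - x, v\rangle$ collapses the right-hand side to $\langle y, v\rangle$, yielding condition (1).

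For the converse, I would assume that (1) and (2) hold and fix an arbitrary $y \in X$. Condition (1) gives $f(y) \ge \langle y, v\rangle$, and rewriting $\langle y, v\rangle = \langle y - x, v\rangle + \langle x, v\rangle$ and using (2) to replace $\langle x, v\rangle$ by $f(x)$ produces exactly $f(y) \ge f(x) + \langle y - x, v\rangle$. Since $y$ is arbitrary, this is precisely the statement $v \in \partial f(x)$.

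I do not expect any genuine obstacle in this lemma. The only point demanding a little care is the sign bookkeeping when dividing by $\lambda - 1$ on the two sides of $\lambda = 1$: it is exactly the combination of the two one-sided cases that upgrades a mere inequality into the equality (2). Convexity of $f$ enters only through the meaning of the subdifferential, and neither properness nor lower semicontinuity is needed, so the proof is purely a manipulation of the defining inequality together with homogeneity.
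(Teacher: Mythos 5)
Your proof is correct and follows essentially the same strategy as the paper: both arguments plug scaled test points into the defining subgradient inequality and exploit positive one-homogeneity, and the converse direction is identical. The only cosmetic difference is that you obtain condition (2) first by testing at $\lambda x$ on both sides of $\lambda=1$ and then deduce (1) by substitution, whereas the paper gets (1) first via $z=\lambda y$ with $\lambda\to+\infty$ and then (2) from the case $\lambda=0$; both routes are equally valid.
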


\begin{proof}[Proof of Theorem \eqref{Th:FirstOrderCondition}]
The first-order optimality condition is $0 \in \partial \mathcal{F}(\Delta)$.
Note that the functional $\mathcal{F}$ can be written as $\mathcal{F}(\Delta,p,q) = \mathcal{F}_1(\Delta) + \mathcal{G}(\Delta)$ (omitting the dependency in $(p,q)$)
where 
\begin{equation*}
\mathcal{G}(\Delta) \eqdef  \int_0^1 \left\| \left(\frac{D}{Dt}\dot{q}\right)^{\flat} + \pi^*(\mathcal{U}(\Delta,q)) \right\|^2_q\,\ud t + P(p(1),q(1))
\end{equation*}
is a smooth function on the space of measures $\mathcal{M}$ and $\mathcal{F}_1$ is a one-homogeneous function on $\mathcal{M}_0(D)$. Namely we have
\begin{equation*}
\mathcal{F}_1(\Delta) = 
\begin{cases}
\on{FR}(\Delta,\partial_t \Delta) \text{ if } \partial_t \Delta \in \mathcal{M} \\
+\infty \text{ otherwise.} 
\end{cases}
\end{equation*}
Therefore, the first-order condition for optimality reads $0 \in \partial \mathcal{F}_1(\Delta)  + D\mathcal{G}(\Delta)$, by smoothness of $\mathcal{G}$, see \cite[Proposition 5.6]{EkelandTemam}. Then, Lemma \ref{Th:GeneralLemma} gives the result using 
\begin{equation*}
D\mathcal{G}(\Delta) =  \eta \int_0^x \left(\frac{D}{Dt}\dot{q} + \pi^*(\mathcal{U}(\Delta,q)) \right) \ud y\,,
\end{equation*}
and the first-order condition $0 \in \partial \mathcal{F}$ then reads
\begin{equation}\label{Eq:FirstOrderConditionSimple}
- D\mathcal{G}(\Delta) \in \partial \mathcal{F}_1(\Delta)\,.
\end{equation}
\end{proof}

This characterization will be used to show the existence of standard paths $(p,q)\in \mathcal{C}$ such that the minimization of $\mathcal{F}$ with respect to $\Delta$ is not $\Delta = 0$. However, this characterization is still relatively general and it only uses the one-homogeneity of the Fisher-Rao functional.
In the next proposition, we give a more explicit but only sufficient condition for optimality. 
Recall that we denoted by $C_{1,0}(D)$ the space of continuous real functions which are $C^1$ with respect to the first (time) variable.

\begin{proposition}\label{Th:FirstOrderConditionSufficient}
Let $(p,q) \in \mathcal{C}$ be a path in $T^*Q$. A sufficient condition for optimality at $\Delta$ for the minimization of $\mathcal{F}$ over $\Delta \in \mathcal{M}_0(D)$ is that 
\par (1) there exists a function $g \in C_{1,0}(D)$ such that $g(1,x) = 0$ for all $x \in [0,1]$,
\begin{equation}\label{eq:RiccatiInequality}
\partial_tg + \frac{1}{\eta} g^2 \leq \eta \int_0^x \left(\frac{D}{Dt}\dot{q} + \pi^*(\mathcal{U}(\Delta,q)) \right) \ud y\,,
\end{equation}
\par (2) the following equality is satisfied
\begin{equation}
\langle - D\mathcal{G}(\Delta),\Delta \rangle = \on{FR}_\eta(\Delta,\partial_t \Delta)\,.
\end{equation}
\end{proposition}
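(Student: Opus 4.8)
The plan is to recognize the two hypotheses as exactly the pair of conditions produced by the first-order criterion of Theorem \ref{Th:FirstOrderCondition}. Recall the splitting $\mathcal{F} = \mathcal{F}_1 + \mathcal{G}$ from the proof of that theorem, where $\mathcal{F}_1(\Delta) = \on{FR}_\eta(\Delta,\partial_t\Delta)$ (with the convention $+\infty$ when $\partial_t\Delta \notin \mathcal{M}(D)$) is positively one-homogeneous and convex, $\mathcal{G}$ is smooth with $D\mathcal{G}(\Delta) = w$, and $\mathcal{F}$ is convex in $\Delta$. Hence $\Delta$ minimizes $\mathcal{F}$ if and only if $0 \in \partial\mathcal{F}(\Delta) = \partial\mathcal{F}_1(\Delta) + D\mathcal{G}(\Delta)$, that is $-w \in \partial\mathcal{F}_1(\Delta)$. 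Applying Lemma \ref{Th:GeneralLemma} to the one-homogeneous $\mathcal{F}_1$, this is equivalent to the conjunction of $\on{FR}_\eta(\mu,\partial_t\mu) \geq \langle \mu, -w\rangle$ for all $\mu \in \mathcal{M}_0(D)$ with $\partial_t\mu \in \mathcal{M}(D)$, together with the equality $\on{FR}_\eta(\Delta,\partial_t\Delta) = \langle \Delta, -w\rangle$. Since the latter is precisely hypothesis (2), the whole work reduces to deducing the displayed inequality from the Riccati inequality (1); the conclusion then follows because convexity of $\mathcal{F}$ upgrades the critical-point condition to global minimality.

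For the inequality I would first dispose of the trivial case: if $\on{FR}_\eta(\mu,\partial_t\mu) = +\infty$ there is nothing to prove, so assume it is finite. Then Proposition \ref{Th:TimeRegularity} shows $\mu \in C^{0,1/2}([0,1],\mathcal{M}_+([0,1]))$, so $\mu$ is a weakly continuous path of positive measures admitting well-defined time-slices, with $\mu_{t=0} = 0$ because $\mu \in \mathcal{M}_0(D)$. The engine is the dual description of the functional from Definition \ref{def:FR}: since $\on{FR}_\eta = H_\eta^*$, one has $\on{FR}_\eta(\mu,\partial_t\mu) \geq \langle u,\mu\rangle + \langle v,\partial_t\mu\rangle$ for every continuous pair $(u,v)$ with $H_\eta(u,v) = 0$, i.e. with $u + v^2/\eta \leq 0$ pointwise. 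I would saturate this constraint with the admissible choice $v = g$ and $u = -g^2/\eta$, which is licit because $\eta = \eta(q)$ is continuous and strictly positive, giving
\begin{equation*}
\on{FR}_\eta(\mu,\partial_t\mu) \geq \Big\langle -\tfrac{g^2}{\eta},\mu\Big\rangle + \langle g,\partial_t\mu\rangle\,.
\end{equation*}

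The next step is an integration by parts in time. Setting $\Phi(t) = \langle \mu_t, g(t,\cdot)\rangle$ and differentiating along the continuous path of measures yields $\langle \partial_t\mu, g\rangle = \Phi(1) - \Phi(0) - \langle \mu,\partial_t g\rangle$; here $\Phi(1)$ vanishes since $g(1,\cdot) = 0$ and $\Phi(0)$ vanishes since $\mu_{t=0} = 0$, so $\langle g,\partial_t\mu\rangle = -\langle \partial_t g,\mu\rangle$. Substituting gives $\on{FR}_\eta(\mu,\partial_t\mu) \geq -\langle \partial_t g + g^2/\eta,\mu\rangle$. Finally, since $\mu \geq 0$ and, by the Riccati inequality \eqref{eq:RiccatiInequality}, $\partial_t g + g^2/\eta \leq w$ pointwise, I conclude $\langle \partial_t g + g^2/\eta,\mu\rangle \leq \langle w,\mu\rangle$ and therefore $\on{FR}_\eta(\mu,\partial_t\mu) \geq -\langle w,\mu\rangle = \langle \mu,-w\rangle$, which is the sought inequality. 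Combined with hypothesis (2) this establishes $-w \in \partial\mathcal{F}_1(\Delta)$, hence $0 \in \partial\mathcal{F}(\Delta)$, so $\Delta$ is a global minimizer.

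I expect the only delicate point to be the time integration by parts for measures and the vanishing of the boundary slices: it requires the path-of-measures regularity of Proposition \ref{Th:TimeRegularity} to make sense of $\mu_{t=0}$ and $\mu_{t=1}$, and is cleanest to justify by first mollifying $\mu$ in time (in the spirit of Lemma \ref{th:RegularisationOfFirstInequality} and Proposition \ref{Th:ShortFormulation}), carrying out the smooth integration by parts, and passing to the limit using the weak continuity of $t \mapsto \mu_t$. Everything else—the Fenchel lower bound from $\on{FR}_\eta = H_\eta^*$ and the pointwise use of the Riccati inequality against the positive measure $\mu$—is routine.
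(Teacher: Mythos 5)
Your proof is correct, and it reaches the conclusion by a route that is organized differently from the paper's, although the underlying algebra is identical. The paper does not pass back through Theorem \ref{Th:FirstOrderCondition}: it lifts the problem to the product space $\mathcal{M}^2(D)$, encodes the constraints $\partial_t\Delta=\delta$ and $\Delta_{t=0}=0$ as the indicator $\iota_V$, and uses the inclusion $\partial\on{FR}_\eta+\partial\iota_V\subset\partial\left(\on{FR}_\eta+\iota_V\right)$ together with the explicit subgradients of Proposition \ref{Th:SubDifOfFR} and the elements $(\partial_t f,f)$ of $\partial\iota_V$; the Riccati inequality then appears as the solvability of $(-D\mathcal{G}(\Delta),0)=(u,v)+(\partial_t f,f)$ with $H_\eta(u,v)=0$, after setting $g=-f$. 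You instead verify directly the two conditions of the necessary-and-sufficient criterion of Theorem \ref{Th:FirstOrderCondition}, deducing the global inequality \eqref{Eq:FirstInequalityCharacterization} from \eqref{eq:RiccatiInequality} via Fenchel--Young for $\on{FR}_\eta=H_\eta^*$ with the saturating pair $(u,v)=(-g^2/\eta,\,g)$, a time integration by parts (licit thanks to $g(1,\cdot)=0$ and $\mu_{t=0}=0$, and justified by mollification in the spirit of Propositions \ref{Th:ShortFormulation} and \ref{Th:TimeRegularity}; note that in the paper this pairing identity is essentially the weak definition of the constraint set $V$), and the positivity of $\mu$. Your version makes the logical link between the two propositions explicit and avoids the product-space machinery; the paper's version exhibits the subgradient itself, which is the natural starting point for the finer characterization of $\partial\on{FR}_\eta$ left open in the conclusion. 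Both arguments lose necessity at the same spot, namely in passing from the global subgradient inequality to the existence of the single function $g$.
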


\begin{proof}
Instead of considering the functional defined on the space of measures $\mathcal{M}$, we consider its extension to $\mathcal{M}^2(D)$ defined as follows (which we still denote by $\mathcal{F}$)
\begin{equation*}
\mathcal{F}(\Delta,\delta) = \on{FR}_\eta(\Delta,\delta) + \mathcal{G}_1(\Delta,\delta) + \iota_{V}(\Delta,\delta)\,,
\end{equation*}
where $\mathcal{G}_1(\Delta,\delta)\eqdef \mathcal{G}(\Delta)\,$ and 
\begin{equation}\label{Eq:DefofV}V\eqdef \{ (\Delta,\delta) \in \mathcal{M}^2 \, : \,  \langle \partial_t f , \Delta \rangle + \langle f,\delta  \rangle= 0 \text{ } \forall f \in C_{1,0}(D) \text{ and } f(1,x) = 0 \, \forall x \in [0,1] \,\}\,,
\end{equation}
The definition of $V$ corresponds to the weak definition of the space of measures $(\Delta, \delta)$ such that $\Delta(0) = 0$ and $\partial_t \Delta = \delta$.
Then, the first-order condition reads 
\begin{equation}\label{Eq:Condition1}
0 \in \partial \left(\on{FR}_\eta(\Delta,\delta) + \mathcal{G}_1(\Delta,\delta) + \iota_{V}(\Delta,\delta)\right)\,.
\end{equation}
Since $\mathcal{G}_1$ is a smooth function, $\partial \mathcal{G}_1(\Delta,\delta) = (D\mathcal{G}(\Delta),0)$ and  condition \eqref{Eq:Condition1} becomes 
\begin{equation}\label{Eq:Condition2}
(-D\mathcal{G}(\Delta),0) \in \partial \left(\on{FR}_\eta(\Delta,\delta) + \iota_{V}(\Delta,\delta)\right)\,.
\end{equation}
However, standard conditions in \cite{EkelandTemam} or finer conditions in \cite[Theorem 7.15]{RaduBot} cannot be applied to guarantee that the subdifferential of the sum
is the sum of the subdifferentials for the sum $\on{FR}_\eta(\Delta,\delta) + \iota_{V}(\Delta,\delta)$. We only have
\begin{equation*}
\partial \on{FR}_\eta(\Delta,\delta) + \partial \iota_{V}(\Delta,\delta) \subset \partial \left(\on{FR}_\eta(\Delta,\delta) +  \iota_{V}(\Delta,\delta) \right)\,.
\end{equation*}
Using the definition of $V$ in equation \eqref{Eq:DefofV}, it is clear that $$\left\{ (\partial_tf,f) \,: \, f \in C_{1,0}(D) \text{ and } f(1,x) = 0 \, \,\forall x \in [0,1] \,\right\} \subset \partial \iota_V(\Delta,\delta)\,.$$
Using Proposition \ref{Th:SubDifOfFR}, we have that 
\begin{equation*}
\left\{ (u,v) \in C^0(D) \, : \, H_\eta(u,v) = 0 \text{ and } \langle u, \Delta \rangle +  \langle v, \delta \rangle = \on{FR}_\eta(\Delta,\delta) \right\} \subset \partial \on{FR}_\eta(\Delta,\delta)\,.
\end{equation*}

%
%
%
%
Therefore, a sufficient condition for optimality is the existence of $(u,v) \in C(D)^2$ and $f \in C_{1,0}(D)$ such that $f(1,x)=0$ for all $x \in [0,1]$
\begin{equation}\label{Eq:FirstCondition1}
(-D\mathcal{G}(\Delta),0) = (u,v) +(\partial_tf,f)\,,
\end{equation}
satisfying also 
\begin{equation*}
\langle \partial_t f - D\mathcal{G}(\Delta),\Delta \rangle + \langle f,\partial_t \Delta \rangle = \on{FR}_\eta(\Delta,\partial_t \Delta)
\end{equation*}
which is condition (2) when simplifying the formula by integration by part on $f$ and $\Delta$.
Equation \eqref{Eq:FirstCondition1} can be rewritten $H_\eta(-\partial_t f-D\mathcal{G}(\Delta),- f)=0$ or equivalently, with $g= -f$,
\begin{equation*}
 \exists \, g \in C_{1,0}(D) \text{ s.t. } \partial_tg + \frac{1}{\eta} g^2 \leq \eta \int_0^x \frac{D}{Dt}\dot{q} + \pi^*(\mathcal{U}(\Delta,q))  \ud y\,,
\end{equation*}
which is condition (1).
\end{proof}

\begin{remark}
Note that equality \eqref{Eq:FirstCondition1} implies that $-D\mathcal{G}(\Delta)$ is a continuous function on $D$. Since we have that 
\begin{equation*}
D\mathcal{G}(\Delta) =  \eta \int_0^x \left(\frac{D}{Dt}\dot{q} + \pi^*(\mathcal{U}(\Delta,q)) \right) \ud y\,,
\end{equation*}
$ \int_0^x \frac{D}{Dt}\dot{q} \ud y$ is only $L^2([0,1],H^1)$ and therefore it is not continuous on $D$.
However, the second term $\int_0^x \pi^*(\mathcal{U}(\Delta,q)) \ud y$ is more regular. Since $\Delta$ is a minimizer, Proposition \ref{Th:TimeRegularity} applies and therefore $\int_0^x \pi^*(\mathcal{U}(\Delta,q)) \ud y$ is Hölder continuous in time with values in $C^1$ w.r.t. $x$ so that it is actually jointly continuous on $D$. 
\par
Now, assuming that $(p,q,\Delta)$ is a minimizer of the relaxed acceleration functional, it would be probably possible to prove time regularity as is usual for $(p,q)$ using the Euler-Lagrange equation and therefore $D\mathcal{G}(\Delta)$ would be continuous.
\end{remark}

Since we will be particularly interested in the case where $\Delta=0$, we emphasize it hereafter.
\begin{corollary}
Let $(p,q) \in \mathcal{C}$ be a path in $T^*Q$ such that $ \eta \int_0^x \frac{D}{Dt}\dot{q} \ud y \in C^0(D)$. The minimization of $\mathcal{F}$ over $\Delta$ for fixed $(p,q) \in \mathcal{C}$ is attained at $\Delta= 0$ if 
 there exists a function $g \in C_{1,0}(D)$ such that 
\begin{equation}\label{eq:RiccatiInequalitySimple}
\partial_tg + \frac{1}{\eta} g^2 \leq \eta \int_0^x \frac{D}{Dt}\dot{q} \ud y\,.
\end{equation}
\end{corollary}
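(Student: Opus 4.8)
The plan is to obtain this corollary as the specialization of Proposition \ref{Th:FirstOrderConditionSufficient} to the distinguished point $\Delta = 0$, verifying that the two sufficient conditions stated there collapse to the single hypothesis \eqref{eq:RiccatiInequalitySimple}.

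First I would use that $\mathcal{U}(\cdot,q)$ is linear in its first argument, being the integral operator defined in \eqref{Eq:NonLocalOperator}; hence $\mathcal{U}(0,q) = 0$ and $\pi^*(\mathcal{U}(0,q)) = 0$. Substituting $\Delta = 0$ into condition (1) of Proposition \ref{Th:FirstOrderConditionSufficient} therefore deletes the nonlocal term, and the Riccati-type inequality there becomes exactly \eqref{eq:RiccatiInequalitySimple}; the function $g$ supplied by the hypothesis (which, as in the proposition, should be taken to satisfy $g(1,x) = 0$) plays the role of $-f$ generating the subdifferential of $\iota_V$. Next I would observe that condition (2) is automatic at $\Delta = 0$: by Definition \ref{def:FR} one has $r(0,0)=0$, so $\on{FR}_\eta(0,\partial_t 0) = 0$, while $\langle -D\mathcal{G}(0), 0\rangle = 0$ as well, and the required equality $\langle -D\mathcal{G}(\Delta),\Delta\rangle = \on{FR}_\eta(\Delta,\partial_t\Delta)$ holds trivially. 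With both conditions verified, $0 \in \partial \mathcal{F}(0)$, and since $\mathcal{F}$ is convex in $\Delta$ by Proposition \ref{Th:WeakConvexity}, this critical point is a global minimizer, which is the assertion.

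The one place requiring care — and the closest thing to an obstacle — is a regularity check. The subdifferential argument in the proof of Proposition \ref{Th:FirstOrderConditionSufficient} requires $-D\mathcal{G}(\Delta)$ to be a continuous function on $D$, since it must be split against $(u,v) \in C^0(D)^2$ and $(\partial_t f, f)$ with $f \in C_{1,0}(D)$. At $\Delta = 0$ one has $D\mathcal{G}(0) = \eta \int_0^x \frac{D}{Dt}\dot{q}\,\ud y$, and there is no $\pi^*(\mathcal{U}(\Delta,q))$ term present to supply the extra time-regularity noted in the remark following the proposition. This is precisely why the continuity of $\eta \int_0^x \frac{D}{Dt}\dot{q}\,\ud y$ on $D$ must be imposed as an explicit hypothesis here rather than derived. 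Once it is granted, inequality \eqref{eq:RiccatiInequalitySimple} is exactly the assertion $H_\eta(-\partial_t g - D\mathcal{G}(0), -g) = 0$, so the chain of inclusions of subdifferentials in the proof of Proposition \ref{Th:FirstOrderConditionSufficient} applies verbatim and yields $\Delta = 0$ as optimizer.
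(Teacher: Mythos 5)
Your proposal is correct and follows exactly the route the paper intends: the corollary is the specialization of Proposition \ref{Th:FirstOrderConditionSufficient} to $\Delta=0$, where condition (1) reduces to \eqref{eq:RiccatiInequalitySimple} because $\mathcal{U}(0,q)=0$ and condition (2) holds trivially since $\on{FR}_\eta(0,0)=0$. Your observations that $g$ should still satisfy $g(1,x)=0$ (omitted in the corollary's statement) and that the continuity of $\eta\int_0^x \frac{D}{Dt}\dot{q}\,\ud y$ must now be assumed rather than derived are exactly the points the paper itself flags in the remark preceding the corollary.
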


We are thus interested in global solutions of the Riccati inequality
\begin{equation*}
\partial_tg + \frac{1}{\eta} g^2 \leq \eta \int_0^x \frac{D}{Dt}\dot{q} \ud y
\end{equation*}
on the time interval $[0,1]$. Riccati equations have been intensively studied and we summarize some useful results hereafter.

\begin{lemma}[Sturm comparison lemma]
Let $m,M$ be two functions in $L^1(D)$ such that $m\leq M$ and $\eta_1,\eta_2 \in C^0(D)$ be positive functions such that $\eta_1\leq \eta_2$. Consider the two Riccati equations
\begin{equation}\label{eq:FirstRiccati}
\partial_tg + \frac{1}{\eta_1} g^2 = m
\end{equation}
and 
\begin{equation}\label{eq:SecondRiccati}
\partial_tf+ \frac{1}{\eta_2} f^2 = M\,.
\end{equation}
Then, if $g(t=0,\cdot) \leq f(t=0,\cdot)$, the solution $f$ to \eqref{eq:SecondRiccati} exists at least as long as the solution $g$ to \eqref{eq:FirstRiccati} exists.
\end{lemma}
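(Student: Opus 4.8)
The plan is to reduce the statement to a family of scalar Riccati ODEs indexed by the spatial variable and to prove a pointwise comparison $f \geq g$, which will then automatically control the failure of existence. First I would fix $x \in [0,1]$ and read \eqref{eq:FirstRiccati} and \eqref{eq:SecondRiccati} as scalar ODEs in $t$ with Carathéodory right-hand sides: since $\eta_1,\eta_2 \in C^0(D)$ are positive on the compact set $D$, they are bounded between two positive constants, and since $m,M \in L^1$, the maps $(t,y)\mapsto m(t,x)-y^2/\eta_1(t,x)$ and $(t,y)\mapsto M(t,x)-y^2/\eta_2(t,x)$ are measurable in $t$, locally Lipschitz in $y$, and dominated on bounded $y$-sets by an $L^1$ function of $t$. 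Standard Carathéodory theory then yields unique maximal (absolutely continuous) solutions on intervals $[0,T_g)$ and $[0,T_f)$, and the goal becomes $T_f \geq T_g$.

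The next step is a one-sided a priori bound that pins down the only possible mode of breakdown. Since $f^2/\eta_2 \geq 0$ we have $\partial_t f \leq M$, whence $f(t) \leq f(0) + \int_0^t M \leq f(0) + \|M\|_{L^1}$ on the whole interval of existence, and similarly $g$ is bounded above. Consequently a maximal solution can only cease to exist by diverging to $-\infty$; this rules out blow-up to $+\infty$ and reduces the entire statement to a lower bound on $f$.

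The heart of the argument is the comparison. Writing $w \eqdef f-g$, I would compute
\begin{equation*}
\partial_t w = (M-m) - \frac{f^2}{\eta_2} + \frac{g^2}{\eta_1} \geq (M-m) - \frac{f^2-g^2}{\eta_2} = (M-m) - \frac{f+g}{\eta_2}\,w\,,
\end{equation*}
where the inequality uses $\eta_1 \leq \eta_2$ (so $g^2/\eta_1 \geq g^2/\eta_2$) together with $g^2 \geq 0$. As $M-m \geq 0$, this gives the linear differential inequality $\partial_t w + a\,w \geq 0$ with $a \eqdef (f+g)/\eta_2$. On any subinterval $[0,T']$ with $T' < \min(T_f,T_g)$ both $f$ and $g$ are continuous, so $a \in L^1([0,T'])$; multiplying by the integrating factor $\exp\big(\int_0^t a\big)$ and integrating yields $w(t) \geq w(0)\,e^{-\int_0^t a} \geq 0$, since $w(0)=f(0)-g(0)\geq 0$. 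Thus $f \geq g$ wherever both solutions exist.

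To conclude I would argue by contradiction: suppose $T_f < T_g$. Then $g$ is continuous on the compact interval $[0,T_f]\subset[0,T_g)$, hence bounded below by some constant $-C$; since the comparison gives $f \geq g \geq -C$ on $[0,T_f)$ while the a priori estimate gives an upper bound, $f$ stays in a compact set on $[0,T_f)$, contradicting the fact that a maximal Carathéodory solution must leave every compact set as $t \to T_f^-$. Hence $T_f \geq T_g$ for every $x$, which is the assertion. I expect the only genuinely delicate point to be organizing the last two steps so as to avoid circularity: the comparison coefficient $a$ involves $f$ itself, so $f \geq g$ must first be established on intervals $[0,T']$ strictly inside the existence interval of $f$, where $f$ is genuinely bounded, and only then bootstrapped into a \emph{global} lower bound on $[0,T_f)$; the $L^1$-only regularity of $m,M$ moreover forces one to work throughout with absolutely continuous solutions and integrating factors rather than classical derivatives.
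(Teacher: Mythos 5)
Your proposal is correct and follows essentially the same route as the paper's proof: the difference $w=f-g$ satisfies a linear differential inequality with nonnegative source (the paper keeps the term $(1/\eta_1-1/\eta_2)g^2$ explicitly on the right-hand side, you absorb it into the inequality), an integrating factor gives $f\geq g$, and the one-sided bound $\partial_t f\leq M$ combined with this lower bound yields existence of $f$ on the existence interval of $g$. Your additional care with the Carath\'eodory framework and the non-circularity of the bootstrap only makes explicit what the paper leaves implicit.
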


\begin{proof}
Substract Equations \eqref{eq:SecondRiccati} and \eqref{eq:FirstRiccati} to get, for $y \eqdef f-g$
\begin{equation*}
\partial_t y + \frac{1}{\eta_2}y(f+g) = M-m + \left(\frac{1}{\eta_1}- \frac{1}{\eta_2}\right) g^2  \,,
\end{equation*}
which implies that
\begin{equation*}
y(t)E(t) = y(0) + \int_0^t \left(M(s)-m(s) + \left(\frac{1}{\eta_1(s)}- \frac{1}{\eta_2(s)}\right) g(s)^2 \right)\, \ud s\,,
\end{equation*}
where $E(t) \eqdef \exp\left(\int_0^t \frac{1}{\eta}(f+g)\,\ud s\right)$.
Since $y(0) \geq 0$ and $M(s)-m(s) \geq 0$ and $\frac{1}{\eta_1(s)}- \frac{1}{\eta_2(s)} \geq 0$, we have that $f(t,\cdot) \geq g(t,\cdot)$.
Moreover, we also have that $\partial_t f \leq M$ which implies that $f$ is bounded on every time interval of existence of $g$. Therefore, $f$ exists as long as $g$ exists.
\end{proof}

\begin{corollary}
A solution to the inequality \eqref{eq:RiccatiInequality} satisfying also boundary conditions exists if and only if there exists a solution $g\in C_{1,0}(D)$ to 
\begin{equation}\label{eq:RiccatiEquality1}
\partial_tg + \frac{1}{\eta} g^2 = \eta \int_0^x \frac{D}{Dt}\dot{q} \ud y\,,
\end{equation}
and satisfying the  constraint $g(1,x) = 0$ for $x \in [0,1]$;
Equivalently, iff there exists a solution $u$ to
\begin{equation}\label{eq:RiccatiEquality2}
\partial_{tt}u+ \left(\int_0^x\dot{q} \ud y \right)\partial_tu - \left( \int_0^x \frac{D}{Dt}\dot{q} \ud y\right)u=0  \,,
\end{equation}
such that for every $x\in [0,1]$, $u(0,x)=0$ and $u(t,x)>0$ for $t \in ]0,1]$ and $\partial_tu(1,x) = 0$ for $x \in [0,1]$.
The previous equation can be rewritten as follows
\begin{equation}\label{eq:RiccatiEquality3}
-\partial_t\left( \eta \partial_tu \right) + \eta \left( \int_0^x \frac{D}{Dt}\dot{q} \ud y\right)u=0  \,.
\end{equation}
\end{corollary}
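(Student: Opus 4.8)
The plan is to reduce everything to the classical linearization of a scalar Riccati equation, treating $x\in[0,1]$ as a fixed parameter, so that for each $x$ both \eqref{eq:RiccatiEquality1} and \eqref{eq:RiccatiEquality2} are ordinary differential equations in the time variable $t$. Throughout I would abbreviate $c(t,x)\eqdef \int_0^x \frac{D}{Dt}\dot{q}\,\ud y$ and use the single structural identity $\partial_t \eta = \eta \int_0^x \dot{q}\,\ud y$, which is all that links $\eta$ to the first-order coefficient appearing in \eqref{eq:RiccatiEquality2}.

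First I would settle the equivalence between the inequality \eqref{eq:RiccatiInequality} (for fixed $\Delta$, with terminal condition $g(1,x)=0$) and the equality \eqref{eq:RiccatiEquality1}. One direction is immediate, since a solution of the equality solves the inequality. For the converse I would run the comparison argument underlying the Sturm comparison lemma: passing to backward time $s=1-t$ turns the equality into the Cauchy problem $\partial_s g = \frac1\eta g^2 - \eta c$, $g|_{s=0}=0$, and turns any solution of the inequality into a supersolution of the \emph{same} Cauchy problem with the same initial value. Because the right-hand side is locally Lipschitz in $g$, the comparison principle gives that the supersolution dominates the exact solution on their common interval of existence; since the quadratic term can only generate blow-up to $+\infty$ in backward time, the finite global supersolution furnished by the inequality pins the exact solution from above, while $\partial_s g \geq -\eta c$ pins it from below. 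Hence the exact Riccati solution cannot blow up on $[0,1]$, which is precisely solvability of \eqref{eq:RiccatiEquality1} with $g(1,x)=0$.

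Next I would perform the logarithmic substitution $g = \eta\,\partial_t u/u$. A direct computation, using $\partial_t \eta = \eta\int_0^x \dot{q}\,\ud y$, gives $\partial_t g + \frac1\eta g^2 = \eta\big(\partial_{tt}u + (\int_0^x \dot{q}\,\ud y)\,\partial_t u\big)/u$, so $g$ solves \eqref{eq:RiccatiEquality1} exactly when $u$ solves the linear equation \eqref{eq:RiccatiEquality2}; multiplying by $\eta$ and using $\partial_t(\eta\,\partial_t u) = \eta\,\partial_{tt}u + (\partial_t\eta)\,\partial_t u$ rewrites this in the Sturm--Liouville form \eqref{eq:RiccatiEquality3}. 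Under this substitution the terminal condition $g(1,x)=0$ becomes $\partial_t u(1,x)=0$ (as $u(1,x)\neq 0$), and the requirement that $g$ remain finite and $C^1$ in $t$ is equivalent to $u$ not vanishing; normalizing $u(1,x)>0$, this is positivity of $u$, with the first zero encountered from the left occurring exactly at the endpoint, which yields the stated conditions $u(0,x)=0$, $u(t,x)>0$ for $t\in\,]0,1]$ and $\partial_t u(1,x)=0$.

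The step I expect to be the main obstacle is exactly this last dictionary between the maximal interval of existence of the Riccati solution and the location of the first zero of $u$: one must verify that an interior blow-up of $g$ forces an interior zero of $u$ and conversely, and then handle the boundary behaviour at $t=0$ carefully so that the threshold case is recorded as $u(0,x)=0$ rather than as a strict interior sign change. The remaining ingredients — the computation behind the substitution, the passage to \eqref{eq:RiccatiEquality3}, and the smoothness and uniformity in $x$ needed to conclude that $g\in C_{1,0}(D)$ — are routine once the one-dimensional correspondence is in place and $c(\cdot,x)$ is known to depend continuously on $x$.
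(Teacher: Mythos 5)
Your argument follows the paper's own (two-line) proof step for step: the Sturm comparison lemma handles the passage from the inequality \eqref{eq:RiccatiInequality} to the equality \eqref{eq:RiccatiEquality1} (your time reversal $s=1-t$ and the super/subsolution bounds are exactly the intended use of that lemma with the terminal condition $g(1,x)=0$), and the logarithmic-derivative substitution linearizes the Riccati equation; so the approach is essentially identical, just carried out in more detail. Two remarks. First, your substitution $g=\eta\,\partial_t u/u$ is the correct one --- it is the only choice consistent with \eqref{eq:RiccatiEquality2} and with the divergence form \eqref{eq:RiccatiEquality3} --- whereas the paper's proof literally writes $g=\partial_t u/u$, which drops the factor $\eta$ and does not produce \eqref{eq:RiccatiEquality2}. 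Second, the obstacle you flag at $t=0$ is genuine, but it sits in the statement rather than in your argument: applied to a global solution $g$ with $g(1,x)=0$, the substitution yields $u(t,x)=\exp\bigl(\int_1^t g/\eta\,\ud s\bigr)$, which is positive on all of $[0,1]$ (including at $t=0$) and satisfies $\partial_t u(1,x)=0$, while a solution normalized by $u(0,x)=0$ is determined up to scale and cannot in general also satisfy $\partial_t u(1,x)=0$ (for instance if $\frac{D}{Dt}\dot q\equiv 0$ then $g\equiv 0$ solves the Riccati problem, yet the only solution of \eqref{eq:RiccatiEquality2} with $u(0,x)=0$ and $\partial_t u(1,x)=0$ is $u\equiv 0$); the paper's proof is silent on this point as well, so your honest flagging of it is not a defect of your proposal.
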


\begin{proof}
The comparison lemma implies that if the solution to the Riccati inequality exists then the solution to the Riccati equation  also exists.
The equivalence with the second-order linear ordinary differential equation is obtained by exchanging the function $g$ with $\frac{\partial_tu}{u}$.
\end{proof}
Let us stress the fact that there exist coefficients of the Riccati for which there does not exist any solution defined on the whole interval $[0,1]$ as detailed in the following remark.
\begin{remark}\label{eq:SimpleRiccati}
Let $a,b$ be two non-negative real numbers, then there exists a solution to the Riccati equation
\begin{equation*}
\dot{x}+ a^2 x^2 = -b^2\,
\end{equation*}
defined on the time interval $[0,1]$ if and only if $ab < \pi$.
By direct integration, we have, for a solution defined on the time interval $[0,1]$, assuming $b\neq0$ and $t\leq 1$
\begin{equation}\label{Eq:ExplicitSolution}
\int_{x(0)}^{x(t)} \frac{\ud y}{b^2+a^2y^2} = \frac{1}{ab}(\tan^{-1}(x(0))-\tan^{-1}(x(t)))  = \int_0^t \ud s \leq 1\,.
\end{equation} 
This gives the result.
For $b=0$, $x \equiv 0$ is a solution. In this particular case, existence of solutions implies existence of solutions satisfying the boundary condition $x(1) = 0$.
\end{remark}

\begin{corollary}
The Riccati equation \eqref{eq:RiccatiEquality1} has a solution defined on $D$ vanishing at time $t=1$ if 
\begin{equation}\label{Eq:FirstCondition}
\frac{\sup_D [\eta \int_0^x \frac{D}{Dt}\dot{q} \ud y ]_-}{\inf_D \eta} < \pi^2\,,
\end{equation} 
where $[f]_-$ denotes the negative part of $f$.

The Riccati equation \eqref{eq:RiccatiEquality1} has no solution defined on $D$ if there exists $x \in [0,1]$ such that $\eta \int_0^x \frac{D}{Dt}\dot{q} \ud y \leq -m$ for a positive constant $m$ and if
\begin{equation*}
\frac{m}{\sup_{t \in [0,1]} \eta} \geq \pi^2\,.
\end{equation*} 
\end{corollary}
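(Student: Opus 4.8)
The plan is to read \eqref{eq:RiccatiEquality1} as a scalar Riccati ODE in the time variable for each fixed $x\in[0,1]$, and to sandwich it between autonomous, constant-coefficient Riccati equations whose solvability on $[0,1]$ is settled explicitly by Remark~\ref{eq:SimpleRiccati}. Write $m(t,x) \eqdef \eta \int_0^x \frac{D}{Dt}\dot{q} \ud y$ for the right-hand side. Since $\eta$ is continuous and strictly positive on the compact set $D$, the constants $\eta_- \eqdef \inf_D \eta$ and $\eta_+ \eqdef \sup_D \eta$ are finite and positive, and by definition of the negative part one has $m(t,x) \geq -M$ everywhere, with $M \eqdef \sup_D [\eta \int_0^x \frac{D}{Dt}\dot{q} \ud y]_-$. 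The two assertions then follow by applying the Sturm comparison lemma in opposite directions, with $\eta_-$ used for the existence claim and $\eta_+$ for the non-existence claim.

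For the existence statement I would fix $x$ and compare \eqref{eq:RiccatiEquality1} with the constant-coefficient equation $\partial_t \phi + \frac{1}{\eta_-}\phi^2 = -M$. Because $m \geq -M$ and $\frac{1}{\eta} \leq \frac{1}{\eta_-}$, the comparison lemma lets one transport global existence from this worst-case model to \eqref{eq:RiccatiEquality1}; and Remark~\ref{eq:SimpleRiccati}, applied with $a^2 = 1/\eta_-$ and $b^2 = M$, is exactly the statement that the model is solvable on $[0,1]$ under $M/\eta_- < \pi^2$. To incorporate the terminal condition $g(1,x)=0$ I would pass to the equivalent linear second-order formulation \eqref{eq:RiccatiEquality3}: integrating $-\partial_t(\eta \partial_t u) + \eta \big(\int_0^x \frac{D}{Dt}\dot{q}\ud y\big) u = 0$ backward from the Cauchy data $u(1,x)=1$, $\partial_t u(1,x)=0$ produces $g = \eta\, \partial_t u / u$, which is finite and satisfies $g(1,x)=0$ precisely as long as $u$ stays positive, the latter being guaranteed by the comparison above. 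Finally I would assemble the pointwise-in-$x$ solutions into a single $g \in C_{1,0}(D)$ by invoking continuous dependence of the linear ODE on its coefficients, which are continuous in $x$.

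For the non-existence statement I would localize at a value $x$ for which $\eta \int_0^x \frac{D}{Dt}\dot{q}\ud y \leq -m$ for all $t$, and now bound $m(t,x) \leq -m$ together with $\frac{1}{\eta}\geq \frac{1}{\eta_+}$, so that any solution $g$ of \eqref{eq:RiccatiEquality1} is a subsolution of $\partial_t \psi + \frac{1}{\eta_+}\psi^2 = -m$. The cleanest route is the direct integral estimate: from $\partial_t g \leq -m - \frac{1}{\eta_+} g^2$ one gets $\frac{\partial_t g}{m + \eta_+^{-1}g^2}\leq -1$, and integrating over $[0,1]$ bounds the length of any interval of finite existence by $\pi\sqrt{\eta_+/m}$. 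When $m/\eta_+ \geq \pi^2$ this length is at most $1$, and a finite $C^1$ solution on the closed interval would force the endpoints of $\arctan$ to be attained at $\pm\infty$, which is impossible. Hence no solution in $C_{1,0}(D)$ can exist.

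The main obstacle I anticipate is the interaction between global existence and the terminal normalization $g(1,x)=0$. The comparison lemma controls only the blow-up time of the Riccati flow, whereas meeting the boundary value additionally requires the associated linear solution $u$ to stay positive on the \emph{whole} interval rather than merely on a subinterval; fixing $g(1,x)=0$ pins down the phase of the autonomous trajectory and thereby determines which portion of its maximal existence interval is actually available over $[0,1]$. This is exactly why the reduction to the linear problem \eqref{eq:RiccatiEquality3} is indispensable, and it is also the step in which the admissible constant must be tracked with care, since the phase constraint imposed by $g(1,x)=0$ is what separates the existence threshold from the bare length-of-existence threshold used in the non-existence direction.
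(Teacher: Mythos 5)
Your route is the same as the paper's: the paper's entire proof is the sentence ``this is a direct consequence of Remark \ref{eq:SimpleRiccati} and the comparison lemma,'' and your sandwich between the constant--coefficient models $\partial_t\phi+\eta_-^{-1}\phi^2=-M$ and $\partial_t\psi+\eta_+^{-1}\psi^2=-m$ is exactly that. Your non-existence half is complete and correct: the direct integration bounds any existence interval of a solution at the bad value of $x$ by $\pi\sqrt{\eta_+/m}\le 1$, and finiteness of $g$ at both endpoints is then contradictory.

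There is, however, a genuine gap in the existence half, at the step ``the latter being guaranteed by the comparison above.'' The constraint $g(1,x)=0$ is not innocent: after reversing time ($h(s)\eqdef -g(1-s,x)$ solves the same type of equation with $h(0)=0$), the comparison lemma forces you to launch the worst-case trajectory from the value $0$, and the autonomous solution of $\dot\phi+a^2\phi^2=-b^2$ with $\phi(0)=0$ exists only on an interval of length $\tfrac{\pi}{2ab}$, not $\tfrac{\pi}{ab}$. With $a^2=1/\eta_-$, $b^2=M$ this closes the argument only under $M/\eta_-<\pi^2/4$. The full threshold $\pi^2$ of Remark \ref{eq:SimpleRiccati} corresponds to a trajectory launched from a large \emph{positive} initial value, which uses the whole window $(-\pi/2,\pi/2)$ of the arctangent; it yields existence of \emph{some} solution of \eqref{eq:RiccatiEquality1} on $D$, but not of the one pinned by $g(1,\cdot)=0$ (in the autonomous worst case with $\pi^2/4<M/\eta_-<\pi^2$, the solution $g(t)=\tfrac{b}{a}\tan(ab(1-t))$ blows up inside $[0,1]$ even though unconstrained global solutions exist). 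You clearly sense this in your closing paragraph --- the ``phase constraint'' you describe is precisely this factor of $4$ --- but the body of your argument asserts rather than proves the positivity of $u$ on all of $[0,1]$ at the constant $\pi^2$. To be fair, the paper's one-line proof is silent on the same point and the remark itself only addresses the terminal condition in the degenerate case $b=0$; so either the constant in the existence statement should be read as $\pi^2/4$ when the vanishing condition is imposed, or the conclusion should be weakened to existence of a global solution without the terminal normalization. A last, minor point: your assembly of the pointwise-in-$x$ solutions into an element of $C_{1,0}(D)$ via continuous dependence is reasonable, but note that the coefficient $\int_0^x\frac{D}{Dt}\dot q\ud y$ is only $L^2$ in time, so the resulting $g$ is absolutely continuous rather than $C^1$ in $t$; the paper does not address this either.
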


\begin{proof}
This is a direct consequence of the remark \ref{eq:SimpleRiccati} and the Comparison lemma.
\end{proof}

As a direct but important application, we have:
\begin{corollary}
The set of paths $(p,q) \in \mathcal{C}$ such that $\mathcal{F}$ has an optimum w.r.t. $\Delta$ at $\Delta =0$ contains an open neighborhood of every geodesics $q(t)$ in the $C^2([0,1],L^2([0,1]))$ topology.
\end{corollary}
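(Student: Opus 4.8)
The plan is to chain the two corollaries preceding the statement. Fix a geodesic $q_0$, with associated $p_0$ determined by $\dot q_0 = \eta(q_0)p_0$. Along a genuine geodesic the covariant acceleration vanishes, $\frac{D}{Dt}\dot q_0 \equiv 0$, so the right-hand side $\eta\int_0^x \frac{D}{Dt}\dot q\,\ud y$ of \eqref{eq:RiccatiInequalitySimple} is zero and $g\equiv 0$ solves it; equivalently the explicit bound \eqref{Eq:FirstCondition} reads $0<\pi^2$. Hence $q_0$ itself lies in the claimed set, and everything reduces to showing that condition \eqref{Eq:FirstCondition} is stable under small perturbations in the $C^2([0,1],L^2([0,1]))$ topology: this condition produces, through the corollary on existence of Riccati solutions, a function $g\in C_{1,0}(D)$ solving \eqref{eq:RiccatiInequalitySimple}, which in turn yields optimality of $\Delta=0$; note that condition (2) of Proposition \ref{Th:FirstOrderConditionSufficient} is automatic since both of its sides vanish at $\Delta=0$.

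The heart of the argument is that the left-hand side of \eqref{Eq:FirstCondition} is a continuous function of $q$ in the $C^2([0,1],L^2)$ topology which vanishes at $q_0$. First I would write the acceleration as $\frac{D}{Dt}\dot q = \ddot q + \Gamma(q)(\dot q,\dot q)$. Since the metric is smooth and the Christoffel symbols are bounded bilinear operators on metrically bounded balls (as used in the proof of Theorem \ref{th:AccelerationCoercive}), together with the convergence properties of $\eta$, $\varphi$ and the projections collected in Lemma \ref{th:WeakConvergenceLemma}, the map $q\mapsto \frac{D}{Dt}\dot q$ is continuous from $C^2([0,1],L^2)$ into $C^0([0,1],L^2)$ and sends $q_0$ to $0$. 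By Cauchy--Schwarz,
\begin{equation*}
\sup_D\left| \eta\int_0^x \frac{D}{Dt}\dot q\,\ud y\right| \leq \|\eta\|_\infty\, \sup_{t\in[0,1]}\left\| \frac{D}{Dt}\dot q(t)\right\|_{L^2}\,,
\end{equation*}
so the numerator of \eqref{Eq:FirstCondition} (bounded by this absolute value) tends to $0$ as $q\to q_0$. On the other hand $\eta(q)(x)=\exp(\int_0^x q\,\ud u)$ converges uniformly on $D$ to $\eta(q_0)$, whence $\inf_D \eta(q)\to \inf_D\eta(q_0)>0$ by compactness of $D$. The ratio in \eqref{Eq:FirstCondition} is therefore continuous near $q_0$ and equals $0$ there, hence stays below $\pi^2$ on a $C^2([0,1],L^2)$-neighborhood of $q_0$.

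It remains to check the regularity hypothesis $\eta\int_0^x\frac{D}{Dt}\dot q\,\ud y\in C^0(D)$ needed to invoke the $\Delta=0$ optimality corollary. This is exactly where the $C^2$-in-time assumption is decisive: with $\frac{D}{Dt}\dot q\in C^0([0,1],L^2)$, the primitive $x\mapsto \int_0^x\frac{D}{Dt}\dot q\,\ud y$ lies in $H^1([0,1])\hookrightarrow C^0([0,1])$ and depends continuously on $t$ (again by Cauchy--Schwarz), so its product with $\eta\in C^0(D)$ belongs to $C^0(D)$ --- unlike the merely $H^2$-in-time situation flagged in the remark after Proposition \ref{Th:FirstOrderConditionSufficient}, where only $L^2([0,1],H^1)$ regularity is available. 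On the neighborhood found above the bound \eqref{Eq:FirstCondition} thus provides a solution of \eqref{eq:RiccatiInequalitySimple}, and the optimality of $\Delta=0$ follows for every $(p,q)\in\mathcal{C}$ in this neighborhood, which proves the corollary.

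I expect the only real obstacle to be the continuity of the covariant acceleration map in the second paragraph --- concretely the uniform-in-time control of $\Gamma(q)(\dot q,\dot q)$ --- which relies on the smoothness of the metric and the explicit structure of the geodesic equation \eqref{FirstEquationSystem2}; the remaining steps are routine Cauchy--Schwarz estimates.
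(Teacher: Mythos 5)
Your proposal is correct and follows essentially the same route as the paper: the paper's proof is the one-line observation that condition \eqref{Eq:FirstCondition} is trivially satisfied for geodesics since $\frac{D}{Dt}\dot{q}=0$, and that this condition is stable under perturbation. You simply fill in the details the paper leaves implicit (continuity of $q\mapsto\frac{D}{Dt}\dot{q}$ via the Christoffel-symbol decomposition, the lower bound on $\inf_D\eta$, and the $C^0(D)$ regularity hypothesis needed to invoke the $\Delta=0$ optimality corollary), all of which are consistent with the paper's intent.
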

\begin{proof}
For geodesics, Equation \eqref{Eq:FirstCondition} is trivially satisfied since $\frac{D}{Dt}\dot{q}=0$ and this condition is stable under perturbations in $C^0([0,1],L^2([0,1]))$. 
\end{proof}

We now want to show that there exist paths for which $\mathcal{F}$ does not achieve its minimum at $\Delta =0$. A simple situation where the acceleration term is easily computable is a reparametrized geodesic.
Thus, we are now interested in paths that are reparametrizations of a geodesic which are the simplest critical points $(p,q) \in T^*Q$ for the acceleration functional.
Recall that for a geodesic $q(t)$ and $\alpha$ a time reparametrization, the acceleration of $y \eqdef q \circ \alpha$ satisfies we have that $\frac{D}{Dt}\dot{y} = \ddot{\alpha} \dot{q}(\alpha)$.

\begin{proposition}
There exist $\alpha$ a time reparametrization and a geodesic denoted in Hamiltonian coordinates by $(p_0,q_0)$ such that $(p(t),q(t)) \eqdef (\dot{\alpha}p_0(\alpha),q_0(\alpha(t)))$ is a curve in $\mathcal{C}$ for which $\Delta=0$ is not a minimum of the functional $\mathcal{F}(\Delta)$.
\end{proposition}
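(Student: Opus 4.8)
The plan is to combine the first-order characterization of Theorem~\ref{Th:FirstOrderCondition} with the one-homogeneity of the Fisher--Rao functional in order to reduce the non-optimality of $\Delta=0$ to the \emph{indefiniteness} of an explicit quadratic form, and then to force this indefiniteness by choosing the reparametrization so that the acceleration becomes strongly negative on a region.

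First I would record the relevant quantities for a reparametrized geodesic. Writing $q(t)=q_0(\alpha(t))$ and $p(t)=\dot\alpha\,p_0(\alpha(t))$, one checks immediately that $\dot q=\eta(q)p$, so $(p,q)\in\mathcal{C}$, and that the acceleration is $\frac{D}{Dt}\dot q=\ddot\alpha\,\dot q_0(\alpha)$. Set $w_0\eqdef D\mathcal{G}(0)=\eta\int_0^x\frac{D}{Dt}\dot q\,\ud y$. Since $\on{FR}_\eta$ is positively one-homogeneous while $\mathcal{G}$ is a smooth quadratic perturbation, for every admissible $\mu_0\in\mathcal{M}_0(D)$ with $\partial_t\mu_0\in\mathcal{M}(D)$ and every $\epsilon>0$ one has
\[
\mathcal{F}(\epsilon\mu_0)-\mathcal{F}(0)=\epsilon\Big(\on{FR}_\eta(\mu_0,\partial_t\mu_0)+\langle\mu_0,w_0\rangle\Big)+\epsilon^2\int_0^1\big\|\pi_q^*(\mathcal{U}(\mu_0,q))\big\|_q^2\,\ud t.
\]
Consequently $\Delta=0$ fails to minimize $\mathcal{F}$ as soon as the bracket is strictly negative for some $\mu_0$, because the linear term then dominates the nonnegative quadratic remainder for small $\epsilon>0$; this is exactly the failure of the necessary inequality \eqref{Eq:FirstInequalityCharacterization}.

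Next I would specialize to $\mu_0=\psi^2$ with $\psi\in C^\infty(D)$ and $\psi(0,\cdot)=0$. Then $\on{FR}_\eta(\psi^2,\partial_t(\psi^2))=\int_D\eta\,(\partial_t\psi)^2$ and $\langle\psi^2,w_0\rangle=\int_D w_0\,\psi^2$, so the bracket equals
\[
Q(\psi)\eqdef\int_D\big(\eta\,(\partial_t\psi)^2+w_0\,\psi^2\big)\,\ud x\,\ud t,
\]
and it suffices to produce $\psi$ with $Q(\psi)<0$. Testing with $\psi(t,x)=\phi(t)\chi(x)$, where $\chi$ concentrates near a point $x_*$, reduces $Q$ to the one-dimensional second variation $\int_0^1(\eta(t,x_*)\dot\phi^2+w_0(t,x_*)\phi^2)\,\ud t$ attached to the Jacobi equation \eqref{eq:RiccatiEquality3}. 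By the Sturm comparison lemma together with the explicit constant-coefficient computation of Remark~\ref{eq:SimpleRiccati}, this one-dimensional form takes a negative value as soon as there is a value $x_*$ with $w_0(\cdot,x_*)\le -m$ for a constant $m$ satisfying $m/\sup_D\eta\ge\pi^2$, i.e. precisely the regime in which the associated Riccati equation admits no global solution on $[0,1]$.

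It then remains to construct $\alpha$ and $q_0$ realizing this sign condition. I would take $q_0$ to be the degenerating geodesic of Proposition~\ref{Th:ExampleOfGeodesic}: along it the quantity $\int_0^x\dot q_0\,\ud y=\partial_t\log\big(\partial_x\varphi_0(\cdot,x)\big)$ (up to its value at $x=0$) has a definite sign near $x=\tfrac12$ and grows in modulus as $\varphi_x(\cdot,\tfrac12)\to0$. Choosing a reparametrization $\alpha$ whose second derivative $\ddot\alpha$ has the matching sign and large modulus on a time subinterval where the geodesic is close to degeneration then drives $w_0=\ddot\alpha\,\eta\int_0^x\dot q_0(\alpha)\,\ud y$ negative enough that $w_0(\cdot,x_*)\le -m$ with $m/\sup_D\eta\ge\pi^2$ at some $x_*$, while $\alpha$ is fixed so that $(p,q)$ meets the boundary data defining $\mathcal{C}$. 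This yields $\psi$ with $Q(\psi)<0$, hence $\Delta=0$ is not a minimizer of $\mathcal{F}$. The main obstacle is exactly this last step: as the geodesic degenerates, $\eta$ itself tends to $0$, so one must control the competition between the blow-up of $\int_0^x\dot q_0\,\ud y$ and the decay of $\sup_D\eta$ to guarantee that the ratio genuinely exceeds $\pi^2$; it is here that an explicit, and ultimately numerical, choice of the geodesic and of $\alpha$ is needed.
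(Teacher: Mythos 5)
Your overall strategy coincides with the paper's: invoke the necessary condition of Theorem \ref{Th:FirstOrderCondition} at $\Delta=0$, test it with a measure concentrated in space at the fixed point of a degenerating symmetric geodesic, and use a reparametrization with $\ddot\alpha>0$ so that the pairing term beats the Fisher--Rao term. Your reduction to the one-dimensional quadratic form $\int_0^1(\eta\dot\phi^2+w_0\phi^2)\,\ud t$ is exactly the paper's inequality \eqref{Eq:CounterExample} (the paper uses $\mu=f(t)^2\delta_{1/2}$ directly, which is legitimate since Theorem \ref{Th:FirstOrderCondition} quantifies over all of $\mathcal{M}_0(D)$, so your smooth concentration is an unnecessary detour but not an error).

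The genuine gap is in the step where you close the argument. You reduce indefiniteness of the quadratic form to the pointwise criterion $w_0(\cdot,x_*)\le-m$ with $m/\sup_t\eta\ge\pi^2$, via Sturm comparison and Remark \ref{eq:SimpleRiccati}. This is a strictly cruder sufficient condition than indefiniteness: at $x_*=1/2$ it requires $\frac{\ddot\alpha}{\dot\alpha}\,|\dot\eta(s,1/2)|$ to be uniformly large in $s$, and since $\eta_0(\cdot,1/2)$ is monotone, bounded and tends to $0$, its derivative is integrable and cannot stay bounded away from zero; whether $\dot\alpha(s)\,|\dot\eta_0(\alpha(s),1/2)|$ can be kept uniformly large depends on the unknown decay rate of $\dot\eta_0$. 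This is exactly the ``competition'' you acknowledge, and you then defer it to a numerical choice --- so the proposition is not actually proved. The paper avoids the issue by never passing through a pointwise bound: it fixes $f(t)=\sin(\pi t)$, bounds the Fisher--Rao side by $\int_0^1\dot f^2=\pi^2/2$ using only $\eta(\cdot,1/2)\le 1$, and observes that the pairing term \emph{telescopes}: for $\frac{\ddot\alpha}{\dot\alpha}\equiv A$ on $[1/4,1]$ one gets $-\int_{1/4}^{3/4}\frac{\ddot\alpha}{\dot\alpha}\dot\eta\,\ud s=A\bigl(\eta_0\circ\alpha(1/4)-\eta_0\circ\alpha(3/4)\bigr)$, which grows linearly in $A$ because the total decrease of $\eta_0$ over the corresponding interval is bounded below by a fixed positive constant once $A$ is large. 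It is this integral (rather than pointwise) structure, coming from $w_0=\frac{\ddot\alpha}{\dot\alpha}\,\partial_t[\eta_0\circ\alpha]$ being an exact time derivative up to the factor $\frac{\ddot\alpha}{\dot\alpha}$, that makes the proof purely analytic; numerics appear in the paper only for the separate question of cubic reparametrizations in Section \ref{Sec:Numerics}. To repair your argument, replace the Riccati nonexistence criterion by a direct evaluation of the quadratic form on an explicit $\phi$ and use the telescoping identity.
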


%

\begin{proof}
We first define the geodesic for which we will then construct a suitable reparametrization $\alpha$.
Consider a momentum $m_0$ which is symmetric with respect to $1/2$. For instance, $m_0 = \delta_{1/4} - \delta_{3/4}$ and we consider the geodesic generated at identity by $m_0$. From Proposition \ref{Th:ExampleOfGeodesic}, we have  that $\eta(t,1/2)$ is decreasing and that $\lim_{t \to \infty} \eta(t,1/2) =0$.
Actually, any geodesic that has a fixed point $z \in ]0,1[$ and a jacobian $\eta(t,z)$ whose limit is $0$ for $t\to +\infty$, will allow for the following construction. 
\par
Let us now prove the existence of a suitable reparametrization. Using Theorem \ref{Th:FirstOrderCondition}, it is sufficient to prove that there exists $\mu \in \mathcal{M}(D)$ such that the following inequality is satisfied:
\begin{equation}\label{Eq:Contradiction}
\on{FR}_{\eta}(\mu,\partial_t \mu) < -\left\langle \mu, \eta \int_0^x \frac{D}{Dt}\dot{q}  \ud y  \right\rangle\,,
\end{equation}
which corresponds to the inequality \eqref{Eq:FirstInequalityCharacterization} where $\Delta = 0$.
We consider $\mu(t) = f(t)^2\delta_{1/2}$ where $\delta_{1/2}$ is the Dirac distribution at point $1/2$ and the previous inequality can be written as
\begin{equation}\label{Eq:CounterExample}
\int_0^1 \dot{f}^2(s) \eta(s,1/2) \ud s < -\int_0^1 f^2(s)\eta(s,1/2) \int_0^{1/2} \frac{D}{Dt}\dot{q}(s,y)  \ud y \ud s\,.
\end{equation}
We now observe that we have the following formula $$\eta(s,1/2) \int_0^{1/2} \frac{D}{Dt}\dot{q}(s,y)  \ud y=\eta(s,1/2) \int_0^{1/2}\ddot{\alpha}(s) \dot{q}_0(\alpha(s),y)  \ud y = \frac{\ddot{\alpha}}{\dot{\alpha}} \frac{\ud }{\ud t} [\eta_0 \circ \alpha]\,,$$
where the second equality comes from
$\frac{\ud}{\ud t} [\eta(q)(t,x)] = \eta(t,x) \int_0^x \dot{q}(s,y) \ud y $.
Inequality \eqref{Eq:CounterExample} can be reformulated as 
\begin{equation}\label{Eq:CounterExample2}
\int_0^1 \dot{f}^2(s) \eta(s) \ud s < -\int_0^1 f^2(s)\frac{\ddot{\alpha}}{\dot{\alpha}} \dot{\eta}(s,1/2) \ud s\,,
\end{equation}
where we have used $\eta(s,1/2) = \eta_0(\alpha(s),1/2)$. Since $\eta(s,1/2) \leq 1$, we have
\begin{equation*}
\int_0^1 \dot{f}^2(s) \eta(s) \ud s \leq \int_0^1 \dot{f}^2(s)\ud s\,.
\end{equation*}
Let us consider for instance $f(t) = \sin(\pi t)$, one has
$
\int_0^1 \dot{f}^2(s) \ud s = \frac{\pi^2}{2}\,.
$
On the interval $[\frac{1}{4},\frac{3}{4}]$, one has $\sin^2(\pi t) \geq \frac{1}{2}$ and therefore, since $\dot{\eta}(s,1/2)\leq 0$
\begin{equation*}
-\int_0^1 f^2(s)\frac{\ddot{\alpha}}{\dot{\alpha}} \dot{\eta}(s,1/2) \ud s \geq - \frac12 \int_{1/4}^{3/4} \frac{\ddot{\alpha}}{\dot{\alpha}} \dot{\eta}(s,1/2) \ud s\,
\end{equation*}
under the additional assumption $\frac{\ddot{\alpha}}{\dot{\alpha}} \geq 0$ which will be satisfied in our subsequent choice. By \eqref{Eq:CounterExample2}, it is sufficient to prove that there exists a reparametrization $\alpha$ such that 
\begin{equation}
\int_0^1 \dot{f}^2 \ud t < -\frac 12 \int_{1/4}^{3/4} \frac{\ddot{\alpha}}{\dot{\alpha}} \dot{\eta}(s,1/2) \ud s\,.
\end{equation}
Let $A>0$ and $\alpha(t) = 1/4 + e^{At}-e^{A/4}$ for $t>1/4$, then
$\frac{\ddot{\alpha}}{\dot{\alpha}} =A>0$ for $t>1/4$ which implies
\begin{equation*}
-\int_0^1 f^2(s)\frac{\ddot{\alpha}}{\dot{\alpha}} \dot{\eta}(s,1/2) \ud s \geq \frac12 A (\eta_0 \circ \alpha(1/4) - \eta_0 \circ \alpha(3/4)) \ud s\,.
\end{equation*}
To conclude, we note that $\eta_0 \circ \alpha(3/4) - \eta_0 \circ \alpha(1/4) \geq \eta_0(3/4) - \eta_0(1/4)$ as long as $A$ is sufficiently big enough so that $\alpha(3/4)\geq 3/4$. Therefore, there exists $A$ big enough such that 
\begin{equation*}
\frac{\pi^2}{2} = \int_0^1 \dot{f}^2(s)\ud s < \frac12 A (\eta_0(3/4) - \eta_0(1/4)) \,,
\end{equation*}
which implies inequality \eqref{Eq:CounterExample}.
\end{proof}

In the previous proof, we did not look for the simplest parametrization for which the inequality \eqref{Eq:CounterExample} holds. However, since cubic reparametrizations of geodesics are critical points for the initial functional of the acceleration, it is natural to ask if there exists a cubic reparametrization of the previous geodesic for which the same estimations hold. 
However, the analytical computations are rather difficult since there is no closed form solutions of the geodesic equation and we therefore chose to show the result by numerical simulations.

\section{A numerical experiment}\label{Sec:Numerics}

In this section, our goal consists in showing numerically that there exists cubic reparametrizations of geodesics that are not minimizers with respect to defect measure. It thus proves that being a critical point of the initial functional does not help for being a minimizer w.r.t. the defect measure. Although this result is not surprising because for non convex variational problems, critical points are not necessarily minimizers, this is yet another hint that tends to show that there exist minimizing solutions to the relaxed acceleration functional for which the defect measure is not zero.

We consider the following geodesic defined by its initial position $\varphi_0 = \on{Id}$, and its initial velocity corresponding to the momentum $m_0 \eqdef 15\, (\delta_{1/4} - \delta_{3/4})$. By the result in Proposition \ref{Th:ExampleOfGeodesic}, the geodesic has a jacobian at $x=1/2$ which tends to $0$, as shown in Fig. \ref{Fig:JacobianGeodesic}. We consider the cubic reparametrization $r(t) \eqdef 2t^3$ plotted in Fig \ref{Fig:Reparametrization}. Then, the right-hand side of the Riccati equation \eqref{eq:RiccatiEquality1} is shown in Fig \ref{Fig:RiccatiRHS}.

\begin{figure}[!htb]
\minipage{0.50\textwidth}
  \includegraphics[width=\linewidth]{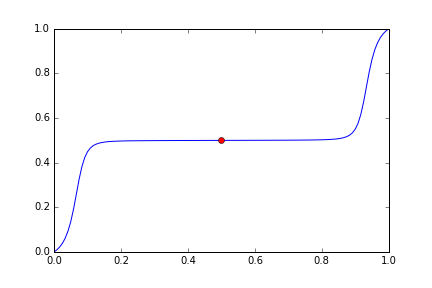}
  \caption{The diffeomorphism at time $t=16$. The red dot represents the point $1/2$ which is fixed by the flow.}\label{Fig:Diffeo}
\endminipage\hfill
\minipage{0.50\textwidth}
  \includegraphics[width=\linewidth]{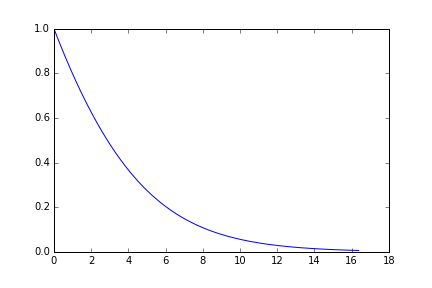}
  \caption{The Jacobian at $x=1/2$ for the initial geodesic.}\label{Fig:JacobianGeodesic}
\endminipage\vfill
\minipage{0.50\textwidth}%
  \includegraphics[width=\linewidth]{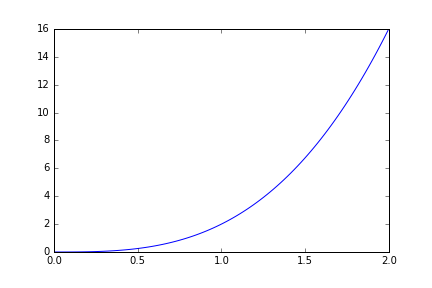}
  \caption{The cubic reparametrization defined by $r(t) = 2t^3$.}\label{Fig:Reparametrization}
\endminipage
\minipage{0.50\textwidth}%
  \includegraphics[width=\linewidth]{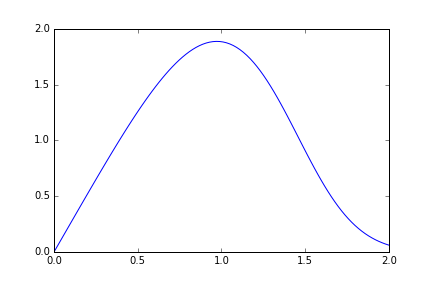}
  \caption{The right-hand side of the Riccati equation \eqref{eq:RiccatiEquality1}.}\label{Fig:RiccatiRHS}
\endminipage
\end{figure}

We then consider $\Delta(t,x) = \sin(2\pi t)^2 \delta_x$ and we numerically compute the Fisher-Rao functional $\on{FR}_{\eta}(\Delta,\partial_t \Delta) = 1.387$ and the quantity $-\int_0^1  \sin(2\pi t)^2\dot{\eta}(t,1/2) \frac{\ddot{\alpha}}{\dot{\alpha}}  \ud t = 1.483$, which satisfy the inequality \eqref{Eq:Contradiction}:
\begin{equation*}
\on{FR}_{\eta}(\Delta,\partial_t \Delta) < -\left\langle \Delta, \eta \int_0^x \frac{D}{Dt}\dot{q}  \ud y  \right\rangle\,.
\end{equation*}
It thus shows numerically that this cubic reparametrized geodesic is not a minimizer with respect to the defect measure $\Delta$ by application of Theorem \ref{Th:FirstOrderCondition}.

\section{Conclusion and perspectives}
In this paper we have studied the relaxation of the acceleration in the case of a right-invariant Sobolev metric on the group of diffeomorphisms in one dimension. We have shown, that for the relaxed endpoint constraint, the relaxation of the functional involves the Fisher-Rao functional. Being a convex and one-homogeneous functional on measures, we have derived several optimality conditions by means of convex analysis: optimality is linked to the existence of a solution to a Riccati equation that involves the acceleration of the curve.
\par
Several interesting questions remain open:
\\ 
(1) We have proved the existence of an example where the minimizer of the relaxed acceleration functional has a non zero defect measure. In other words, we ask whether all the minimizers of the relaxed acceleration functional satisfy $\Delta = 0$. \\
(2) Another important point is to add a boundary hard constraint at time $1$ on the couple $(p,q)$. It is probable that it will involve a different form of the Fisher-Rao metric. \\
(3) Our proof relied at different steps on the one dimensional case. For instance, the formulation of the geodesic equations was a key point for obtaining the properties on the derivative of the metric. This step can probably be extended to higher dimensional case, i.e. on $\on{Diff}^s([0,1]^d)$ for $s>d/2+1$. However, the relaxation of the functional will certainly differ from the Fisher-Rao functional. \\
(4) In a different direction, we could also have written the Euler-Lagrange equations for the relaxed functional on the space $\mathcal{C}_R$. An interesting question would be to use this Euler-Lagrange equation to derive regularity properties of the solutions from the endpoints such as in the geodesic case as in Corollary \ref{Th:Regularity}. \\
(5) We also have left behind a technical question about the characterization of the subdifferential of the Fisher-Rao functional similarly to what has been done for the $TV$ norm on functions. This would lead to a finer characterization than that presented in Theorem \ref{Th:FirstOrderCondition}.
\appendix

\section{Computation of the geodesic equations}\label{Ap:ComputationOfGeodesicEquations}
We give hereafter the proof of theorem \ref{Th:GeodesicEquations} which is based on rather straightforward computations.
\begin{proof}
%
 
As is well known, the geodesic equation on $Q$ is the projection of the geodesic equation on $L^2$ for the same Riemannian metric, whose Levi-Civita connection is denoted by $\nabla$.
 We denote by  $\nabla^0$ the induced Levi-Civita connection on $Q$. It is given by
\begin{equation*}
\nabla^0_{X}Y = \nabla_{X}Y - \mathbf{II}(X,Y)\,,
\end{equation*}
with $X,Y$ two vector fields tangent to $Q$ and arbitrarily extended to $L^2$.
In the previous formula, the second fundamental form $\mathbf{II}$ is a tensor defined by
$$\mathbf{II}(X,Y) = (\nabla_X Y )^\perp\,,$$
where $(\nabla_X Y )^\perp$ is the orthogonal projection of $\nabla_X Y$ on $(TQ)^\perp$.
Therefore, the geodesic equation on $Q$ can be written as 
$$ \frac D {Dt} \dot{q} = \mathbf{II}(\dot{q},\dot{q}) = \left(\frac D {Dt} \dot{q}\right)^\perp\,.$$
The previous formulation can be rewritten as $\pi_q(\frac D {Dt} \dot{q}) = 0$ where $\pi_q$ is the orthogonal projection  corresponding to the constraints w.r.t. the scalar product \eqref{Metric}, i.e the scalar product in $L^2$ with the weight $\frac{1}{\eta}$. 
If there were no constraints on $q$, the geodesic equation would be
\begin{equation}\label{GeodesicEquationL2}
\begin{cases}
\dot{q} = \eta(q)p\\
\dot{p} = -\frac 12 \int_x^1 \eta(q)p^2 \ud y\,.
\end{cases}
\end{equation}
In order to project these equations on $T_q^*Q$, we need to compute the scalar product of $\dot{p}  + \frac 12 \int_x^1 \eta(q)p^2 \ud y$ with $1$ and $\varphi$. Note that for a given path $(q(t),p(t)) \in T^*Q$, one has $\frac {\ud}{\ud t}\int_0^1 \eta p \ud x = 0$ which implies, using formula \eqref{GeodesicEquationL2} and after few (straightforward) computations
\begin{equation*}
\langle \dot{p}, 1 \rangle_\eta = \int_0^1 \dot{p} \,\eta \ud x   =- \int_0^1 p\, \eta \left(\int_0^x \eta p \ud y\right) \ud x
 = - \left[ \frac{1}{2} \left( \int_0^x \eta \, p \ud y\right)^2 \right]_0^1= 0\,, 
\end{equation*}
since $\int_0^1 q(t,x) \ud x = 0$ for all $t\in [0,1]$. 
\par For the scalar product with $1$ of the r.h.s. term in the momentum equation of \eqref{GeodesicEquationL2}, we use a change of variable by $\varphi$ to obtain
\begin{multline*}
\left\langle 1,\frac 12 \left(\int_x^1 \eta(q)p^2 \ud y \right)\right\rangle_\eta = \int_0^1 \frac 12 \left(\int_x^1 \eta(q)p^2 \ud y \right)\,\eta \ud x \\= \frac 12 \int_0^1 \int_x^1 p^2 \circ \varphi^{-1} \ud y \ud x = \frac 12 \int_0^1 x \, p^2 \circ \varphi^{-1} \ud x = a\,.
\end{multline*}
Therefore we get the result in the formula \eqref{eq:FirstProjection}.
\par
After some computations, we also have $\frac {\ud}{\ud t}\int_0^1 \varphi \,\eta \,p \ud x = 0$ which implies
\begin{align*}
\langle \,\dot{p} \,,\, \varphi \, \rangle_\eta   &= - \int_0^1 p\, \varphi \,\eta \left(\int_0^x \eta \,p \ud y \right)\ud x - \int_0^1 p \,\varphi \,\eta \left(\int_0^x \eta \left( \int_0^y p\, \eta \ud z\right) \ud y \right)\ud x \\
 &=\frac 12 \int_0^1 \eta  \left( \int_0^x \eta \,p \ud y \right)^2 \ud x + \int_0^1 \eta \left( \int_0^x \eta \, p \ud y \right)^2 \ud x\\
 & = \frac 32 \int_0^1 \left( \int_0^x p \circ \varphi^{-1}  \right)^2\ud x = b\,, 
\end{align*}
where the second line is obtained by integration by part and the last one by the change of variable with $\varphi$.
Last, we compute, by integration by part
\begin{equation*}
\left\langle \varphi,\frac 12 \left(\int_x^1 \eta(q)p^2 \ud y \right)\right\rangle_\eta = \frac12 \int_0^1 \eta \, \varphi \left(\int_x^1 \eta p^2 \ud y \right)\ud x =  \frac 14 \int_0^1 \varphi^2 \, \eta p^2\ud x = \frac 14 \int_0^1 x^2 \, p^2 \circ \varphi^{-1} \ud x =c\,.
\end{equation*}
%
%
%
%
The geodesic equation then reads:
\begin{equation}
\begin{cases}
\dot{q} = \eta(q)p\\
\dot{p} = - \frac 12 \int_x^1 \eta(q)p^2 \ud y +   \begin{bmatrix} 1 & \varphi \end{bmatrix} H_2^{-1}  \begin{bmatrix} a \\ b +c \end{bmatrix} 
\end{cases}
\end{equation}
where the coefficients $a,b,c$ are defined in the Theorem.
\end{proof}

\section{Other proofs}\label{Ap:Proofs}
\begin{proof}[Proof of Proposition \ref{Th:ExampleOfGeodesic}]
For instance, consider $n=2$, $\lambda$ a positive real and $\alpha \in ]0,\frac 12[$, $q_1(0) = 1/2- \alpha$, $p_1(0) = \lambda$ and $q_2(0) = 1/2+\alpha$, $p_2(0) = -\lambda$, then the solution has the following properties.

First, by (central) symmetry with respect to $1/2$, $v(t,1/2)=0$ for all time $t \in \R_+$ and as a consequence, $1/2$ is a fixed point of the flow, that is $\varphi(t,\frac 12) = \frac 12$.
Second, $\lim_{t \to \infty} q_i(t) = 1/2$ for $i=1,2$ indeed, it is easy to see that the trajectory of $q_i$ is monotonic. Then, since the manifold of landmarks is complete, $(q_1,q_2)$ has to escape every compact set which implies the desired property.
The jacobian of the flow satisfies
\begin{equation}\label{Eq:Jacobian}
\partial_t \varphi_x(t,1/2) = \partial_x v(t,1/2) \varphi_x(t,1/2)\,.
\end{equation}
Therefore, since $\partial_x v(t,1/2)$ is always negative, it implies that $\varphi_x(t,1/2)$ is decreasing and remains positive. Then, using the fact that $\lim_{t \to \infty} q_i(t) = 1/2$ for $i=1,2$, one can conclude that 
$\lim_{t \to \infty} \varphi_x(t,1/2) = 0$ which is the last property we will need. We summarize what we have shown as follows:

Of course, the argument developed above would be valid for a larger class of initial momentums with a central symmetry but this is not needed.
\end{proof}

\begin{proof}[Proof of Theorem \ref{th:Existence}]
Local existence follows from existence of Caratheodory ordinary differential equations \cite{ODE}. Simple estimates give that the right-hand side is  Lipschitz continuous w.r.t. $q \in L^\infty([0,1])$ on every bounded balls: 
Indeed, on the ball $B(0,r)$ in $L^\infty([0,1])$, the map $q \mapsto \exp(\int_0^x q(y) \ud y)$ is Lipschitz with the Lipschitz constant corresponding to that of $\exp$ on $[-r,r]$ which we denote by $K$. More precisely, for $q_1,q_2 \in L^\infty([0,1])$  we have
\begin{equation}
 \eta(q_1) \pi^*_{q_1}(p) - \eta(q_2) \pi^*_{q_2}(p) = \left( \pi^*_{q_1}(p) \right)\left( \eta(q_1) - \eta(q_2) \right) + \eta(q_2)\left( \pi^*_{q_1}(p) -  \pi^*_{q_2}(p) \right)\,.
\end{equation}
Using inequalities \eqref{Eq:1}, \eqref{Eq:2} and \eqref{Eq:3} below, we obtain
\begin{align*}
 \| \eta(q_1) \pi^*_{q_1}(p) - \eta(q_2) \pi^*_{q_2}(p) \|_\infty &\leq K \| \pi^*_{q_1}(p) \|_\infty \| q_1 - q_2 \|_\infty + e^r \| \pi^*_{q_2}(p) - \pi^*_{q_1}(p) \|_\infty \\
 & \leq 3K \| p \|_\infty \| q_1 - q_2 \|_\infty + 2 e^r  \|  p \|_{\infty} K \| q_1- q_2 \|_\infty \\
 & \leq (3 + 2e^r)K \| p \|_\infty \| q_1 - q_2 \|_\infty\,.
\end{align*}
As for the time regularity of $\|p(t)\|_\infty$, the hypothesis $L^2$ is sufficient as proven in \cite{ODE}. 
\par 
In order to prove global existence, we provide some a priori estimates on the norm of $q$ that bound its growth. For that, we first remark that for any $q \in L^2([0,1])$, we have 
\begin{subequations}
\begin{align}
&\| \varphi \|_{\infty} \leq 1 \, \label{Eq:1}\\ 
&| \langle p,\eta(q) \rangle | \leq \| p \|_{\infty} \| \eta(q) \|_{L^1} = \| p \|_{\infty} \,,\label{Eq:2}\\
&| \langle p,\varphi \, \eta(q) \rangle | \leq \| p\, \varphi \|_{\infty} \| \eta(q) \|_{L^1} \leq \| p \|_{\infty}\label{Eq:3}\,. 
\end{align}
\end{subequations}
These inequalities imply first that 
 $\| \pi^*_{q}(p) \|_\infty$ is bounded and also that $\dot{q}$ is bounded in $L^1$.
 Indeed, one has, using $\| \eta(q) \|_{L^1} = 1$,
 \begin{align*} 
 &\| \pi^*_{q}(p) \|_\infty \leq 3\| p \|_{\infty}\\
 &\|\eta(q) \pi^*_q(p)\|_{L^1} \leq \| \eta(q) \|_{L^1} \| \pi^*_{q}(p) \|_\infty \leq  3\| p \|_{\infty} \,.
\end{align*}
Now, since $q$ is bounded in $L^1$ (using initial condition), we get 
\begin{equation} \label{Eq:Boundedness}
\| \eta(q) \|_{\infty} \leq e^{\|q(0)\|_{\infty} + \int_0^t 3\| p(s) \|_{\infty}\ud s}
\end{equation} and therefore
\begin{equation}\label{Eq:Boundedness2}
\| q(t) \|_{L^\infty} \leq  \| q(0) \|_{L^\infty} + K \int_0^t \| p(s) \|_{\infty}(e^{\|q(0)\|_{\infty} + \int_0^s 3\| p(u) \|_{\infty}\ud u}) \, \ud s\,,
\end{equation}
which implies global existence.
\end{proof}

\begin{proof}[Proof of Lemma \ref{th:WeakConvergenceLemma}]
Note that the second point is a direct consequence of the first one since $\varphi(x) = \int_0^x \eta \ud y$, therefore the convergence of $\eta$ implies the convergence of the first spatial derivative of $\varphi$.
The first point follows by application of the Aubin-Lions-Simon lemma \cite{aubin,lions,simon} on $\int_0^x q \ud y \in H^1([0,1],H^1([0,1])$ which is compactly embedded in $C^0([0,1],C^0([0,1]))$. As a consequence, it is not difficult to prove that it is also compactly embedded in $C^0(D)$.

We now prove the last two points. We refer to formulas \eqref{eq:projection} and \eqref{eq:projectionDuale} which involve $\eta, \varphi$. Let us recall for readability the projection $\pi_q$:
\begin{equation}\label{eq:DetailedProjection}
\pi_q(f) = f -  \begin{bmatrix} \eta & \varphi \eta \end{bmatrix} H_2^{-1}  \begin{bmatrix} \langle f , \eta \rangle_{1/\eta}  \\ \langle f , \varphi \,\eta \rangle_{1/\eta}  \end{bmatrix}\,.
\end{equation}

The two functions $\eta_n$ and $\varphi_n$ are strongly convergent in $C^0(D)$ (and therefore in $L^2$). Thus, it is also the case for polynomial functions of $\eta_n $ and $\varphi_n$ since $C^0(D)$ is a Banach algebra. Since the projections $\pi_{q_n}$ and $\pi_{q_n}^*$ only involve strongly convergent functions and associated scalar product, it gives the result.
The last point follows the same line since one has
\begin{equation}\label{eq:DerivativeProjection}
\frac{\ud}{\ud t} \pi_{q}(f) = -  \begin{bmatrix} \dot{\eta} & \dot{\varphi} \eta + \varphi \dot{\eta} \end{bmatrix} H_2^{-1}  \begin{bmatrix} \langle f , \eta \rangle_{1/\eta}  \\ \langle f , \varphi \eta \rangle_{1/\eta}  \end{bmatrix}  - \begin{bmatrix} \eta & \varphi \eta \end{bmatrix} H_2^{-1}  \begin{bmatrix} 0  \\ \langle f , \dot{\varphi} \eta \rangle_{1/\eta}  \end{bmatrix}\,.
\end{equation}
We check that each term in the formula \eqref{eq:DerivativeProjection} are strongly convergent using 
\begin{align*}
& \dot{\eta} = \eta \int_0^x \dot{q} \ud y \\
& \dot{\varphi} = \int_0^z \eta \left( \int_0^x \dot{q} \ud y \right)\ud x \,.
\end{align*}

Using the assumption $p \in H^1([0,1],L^2([0,1]))$, we have $\dot{q} = \eta(q)\, \pi_q^*(p)$ and also $\int_0^x \dot{q}_n \ud y \in H^1([0,1],H^1([0,1]))$. Using the compact embedding in $C^0(D)$, it implies the strong convergence of $\int_0^x \dot{q} \ud y$ in $C^0(D)$. Then, the result follows easily for $\frac{\ud}{\ud t}  \pi_q$ and it is similar for $\frac{\ud}{\ud t}  \pi_q^*$.
\par
The last point follows from the fact that the projection part in \eqref{eq:DetailedProjection} involves dual pairings between vectors $\varphi_n,\eta_n$ that are strongly convergent in $C^0(D)$ (and therefore strong convergence in $L^2$) $z_n$ which weakly converge. The same argument also applies to $\left(\frac d {dt} \pi_{q_n}\right)(z_n)$.
\end{proof}

\begin{proof}[Proof of Proposition \ref{th:Boundedness}]
From the proof of Proposition \ref{th:Existence}, formulas \eqref{Eq:Boundedness} and \eqref{Eq:Boundedness2} give that  $q_n$ is bounded in $H^1([0,1],L^\infty([0,1]))$ and therefore bounded in $H^1([0,1],L^2([0,1]))$. Proving the weak convergence can thus be done on every converging subsequence. 
We consider now a weakly converging subsequence also denoted by $q_n$ to $\tilde{q} \in H^1([0,1],L^2([0,1]))$, then Lemma \ref{th:WeakConvergenceLemma} implies the strong convergence of $\pi_{q_n}^*$ and $\eta(q_n)$ to $\pi_{\tilde{q}}^*$ and $\eta(\tilde{q})$. As a consequence, $\eta(q_n)\pi_{q_n}^*(p_n)$ weakly converges in $L^2(D)$ to $\eta(\tilde{q})\pi_{\tilde{q}}^*(p_\infty)$ we get that $\tilde{q}$ is a solution associated with $p_\infty$ and that this solution is also in $H^1([0,1],L^\infty([0,1]))$. Therefore, $\tilde{q}$ is the unique solution given by Proposition \ref{th:Existence}, that is $\tilde{q} = q_\infty$. It implies that all the converging subsequences of $q_n \in H^1([0,1],L^2([0,1]))$ converges to $q_\infty$ which gives the result.
\end{proof}

\begin{proof}[Proof of Lemma \ref{th:L1Approximation}]
We look for $p_n(t,x)$ under the following form
\begin{equation*}
p_n(t,x) = (a(t,x) + b(t,x) \cos(2\pi n x)) \sin(2\pi n x)\,.
\end{equation*}
It is clear that the first condition $p_n\rightharpoonup 0$ is satisfied using the identity $$ 2\cos(2\pi n x) \sin(2\pi n x)= \sin(4\pi n x)\,.$$
Now we expand $p_n^2$ to obtain
\begin{multline*}
p_n^2(t,x) = a^2(t,x)\sin^2(2\pi n x) + b(t,x)^2 \cos^2(2\pi n x)\sin^2(2\pi n x) \\+ 2  a(t,x)b(t,x) \sin^2(2\pi n x) \cos(2\pi n x)\,.
\end{multline*}
We observe that 
\begin{equation*}
\sin^2(2\pi n x) \cos(2\pi n x) = \frac 12 \cos(2\pi n x) - \frac 14 (\cos(6\pi n x) + \cos(2\pi n x))
\end{equation*}
which weakly converges to $0$.
Therefore, we get that 
\begin{equation*}
p_n^2 \rightharpoonup \frac 12 a^2 + \frac 14 b^2\,,
\end{equation*}
since $\sin^2(2\pi n x),\cos^2(2\pi n x) \rightharpoonup \frac 12 $.
Similarly, we have
\begin{multline*}
\dot{p}_n^2(t,x) = \dot{a}^2(t,x)\sin^2(2\pi n x) + \dot{b}(t,x)^2 \cos^2(2\pi n x)\sin^2(2\pi n x) \\+ 2  \dot{a}(t,x)\dot{b}(t,x) \sin^2(2\pi n x) \cos(2\pi n x)\,,
\end{multline*}
and 
\begin{equation*}
\dot{p}_n^2 \rightharpoonup \frac 12 \dot{a}^2 + \frac 14 \dot{b}^2\,.
\end{equation*}
We want to find $a,b$ such that 
\begin{align*}
& \frac 12 a^2 + \frac 14 b^2 = \mu \\
 &\frac 12 \dot{a}^2 + \frac 14 \dot{b}^2 = \nu\,.
\end{align*}
with initial conditions $a(0)=0$ and $b(0)=0$.

We solve this equation in polar coordinates and set $(a/\sqrt{2},b/2) = r \exp(i\theta)$ to obtain
\begin{equation}\label{eq:SystemPolar}
\begin{cases}
\frac 12 a^2 + \frac 14 b^2 = r^2\\
\frac 12 \dot{a}^2 + \frac 14 \dot{b}^2 = \dot{r}^2 + r^2 \dot{\theta}^2
\end{cases}
\end{equation}
This implies that $r = \sqrt{\mu} \in L^2(D)$ and therefore $\dot{r}^2 = (\partial_t \sqrt{\mu})^2$.
A necessary and sufficient condition for the existence of a solution to the system \eqref{eq:SystemPolar} is
\begin{equation}\label{eq:NecessaryCondition}
(\partial_t \sqrt{\mu})^2 \leq \nu \,.
\end{equation}
which is exactly another way to write condition \eqref{eq:InequalityCondition}. Indeed, one has
\begin{equation*}
\dot{\theta}^2 = \frac{\nu - (\partial_t \sqrt{\mu})^2}{\mu} \in L^1(D,\mu)\,,
\end{equation*}
for which the following formula is a particular solution
\begin{equation*}
\theta(t) =\theta(t=0) + \int_0^t \sqrt{\frac{\nu - (\partial_t \sqrt{\mu})^2}{\mu}} \, \ud s    \,,
\end{equation*}
and this implies that $\theta \in L^2(D,\mu)$. Now, this in turn implies that $r\sin(\theta)$ and $r\cos(\theta)$ belong to $H^1([0,1],L^2([0,1]))$.

The initial condition $p(0)=0$ is implied by the fact that $|r\sin(\theta)|(0)\leq \sqrt{\mu}(0) = 0$ and  $|r\cos(\theta)|(0)\leq \sqrt{\mu}(0) = 0$.

The last assertion of the lemma is obvious. Note that if $\mu(1)=0$ then $a(1) = b(1) = 0$, this case will also be used in the relaxation with boundary constraints.
\end{proof}

%

\begin{proof}[Proof of Lemma \ref{th:FirstStepDensity}]
We use Lemma \ref{th:L1Approximation} to obtain $\alpha_n \in L^2(D)$ such that $\alpha_n\rightharpoonup 0$, $\alpha_n^2 \rightharpoonup \mu - p^2$ and $\dot{\alpha}_n^2 \rightharpoonup \nu - \dot{p}^2$. Since $(p,q) \in H^1([0,1],L^\infty([0,1])) \times H^2([0,1],L^\infty([0,1]))$ and $(p,q,\mu,\nu) \in \mathcal{C}_R \cap \mathcal{C} \times L^\infty(D) \times L^1(D)$ we have that $p_n \eqdef p + \alpha_n \in L^\infty(D)$ is bounded uniformly in $n$.
Therefore, Lemma \ref{th:Existence} provides a solution to 
\begin{equation*}
\dot{q}_n = \eta(q_n) \pi^*_{q_n}(p_n)\,.
\end{equation*}
and this solution $q_n$ weakly converges to $q$ in $L^2(D)$ which is the solution associated with $p$. 

Now, we show that $(\pi^*_{q_n}(p_n),q_n)$ satisfies the requirements. Indeed, $\pi^*_{q_n}(p_n) = \pi^*_{q_n}(p) + \pi^*_{q_n}(\alpha_n)$. The first term strongly converges to $\pi^*_q(p)$. Moreover, the second term satisfies the identity
$$ \pi^*_{q_n}(\alpha_n) = \alpha_n + \beta_n$$ with $\beta_n$ that strongly converges to $0$ ($\beta_n$ is the projection of $\alpha_n$ on a finite dimensional subspace). We then write $\pi^*_{q_n}(p_n) = \pi^*_{q_n}(p) + \beta_n + \alpha_n$ with $\pi^*_{q_n}(p) + \beta_n$ strongly converging to $\pi^*_q(p)$ and we apply the remark \ref{rem:SimpleLemma} below to obtain $\pi^*_{q_n}(p_n)^2 \rightarrow \mu$.

We now prove in the same way the result for $\frac{\ud }{\ud t} \left(\pi^*_{q_n}(p_n) \right)$ using
\begin{equation*}
\frac{\ud }{\ud t} \pi^*_{q_n}(p_n) = \dot{p}_n + \left(\frac{\ud }{\ud t} \pi^*_{q_n}\right)(p_n)\,.
\end{equation*}
Lemma \ref{th:WeakConvergenceLemma} applies and the second term strongly converges to $0$. Then, Remark \ref{rem:SimpleLemma} below gives that $\left(\frac{\ud }{\ud t} \left(\pi^*_{q_n}(p_n) \right)\right)^2 \rightarrow \nu$.
Redefining $p_n$ by $\pi^*_{q_n}(p_n) $ gives the result.
\end{proof}

\begin{remark}\label{rem:SimpleLemma}
Let $X \subset \R^d$ be a compact domain and $\mu$ be a Borel measure on 	$X$. In $L^2(X,\mu)$, we consider $f_n \rightarrow f$ a strongly convergent sequence and $g_n \rightharpoonup 0$ a weakly convergent sequence. We also assume the weak convergence $g_n^2 \rightarrow z$ in $\mathcal{M}_+(X)$.
Then one has $$ (f_n + g_n)^2 \rightarrow  x^2 + z \text{ in }\mathcal{M}_+(X)\,.$$
Indeed, one has $(f_n + g_n)^2 = f_n^2 + g_n^2 + 2f_ng_n$. Moreover, for every $\varphi \in C^0(X)$ 
$$\int f_ng_n \varphi \ud \mu \rightarrow 0$$
since $\varphi f_n $ strongly converges to $\varphi f$, which proves the result.
\end{remark}

\begin{proof}[Proof of Lemma \ref{Th:LemmaSimpleConvexity}]
Let $x$ be a minimum of $f+g \circ A$. Note that it is sufficient to prove that $Ax_0=Ax$ since in this case, $g(Ax_0) = g(Ax)$ and therefore, since the value of the function $f+g\circ A$ is the same at $x_0$ and $x$, we have that $f(x) = f(x_0)$. This implies that $x_0=x$ by the hypothesis on $f$.

\par
We now prove $Ax_0=Ax$. If it were not the case, then by strict convexity of $g$ on the segment $[Ax_0,Ax]$ and convexity of $f$, we have that 
\begin{align*}
g( 1/2 (Ax_0+Ax)) <  1/2 g(Ax_0) +  1/2 g(Ax) \\
f( 1/2 (x_0+x)) \leq 1/2 f(x_0) +  1/2 f(x) \,.
\end{align*}
which implies 
$$g( 1/2 (Ax_0+Ax))+f( 1/2 (Ax_0+Ax))<f(x_0)+g(Ax_0)\,,$$
which gives a contradiction.
\end{proof}

\begin{proof}[Proof of Lemma \ref{Th:GeneralLemma}]
If $v \in \partial f(x)$, then for every $z \in X$, 
\begin{equation} \label{Eq:DefSousDiff}
f(z) \geq f(x) +  \langle z-x , v \rangle\,.
\end{equation}
For $z= \lambda y$ and $\lambda \to +\infty$, the previous inequality implies 
$f(y) \geq \langle y , v \rangle$ which is the first condition.
Taking $\lambda = 0$ implies $\langle x , v \rangle \geq f(x)$, which gives, together with the previous inequality, the second condition.
\par
If the two conditions are fulfilled then, adding the equality of the second condition to the inequality of the first one gives inequality \eqref{Eq:DefSousDiff}.
\end{proof}

\section{Reproducing kernel Hilbert space of $H_0^2([0,1])$} \label{Ap:rkhs}

The reproducing kernel for the space $H^2([0,1])$ endowed with the metric 
\begin{equation}\label{Eq:NormRK1}\| f  \|^2 = f(0)^2 + f'(0)^2 + \int_0^1 f''(s)^2 \ud s \,\end{equation} is known to be \cite[Section 1.6.2]{berlinet},
\begin{equation}
K(s,t) = 1 + st + \int_0^1 (t-u)_+(s-u)_+ \ud u
 \end{equation} 
 and equivalently,
 \begin{equation}
K(s,t) = 
\begin{cases}
1 + st + \frac{1}{2} ts^2-\frac 16 s^3 \text{ if } s<t \\
1 + st + \frac{1}{2} st^2-\frac 16 t^3 \text{ otherwise}\,.
\end{cases}
\end{equation}
Now, we want to use this formula to give an explicit expression of the kernel of $H_0^2([0,1])$ endowed with the norm 
$\int_0^1 f''(s)^2 \ud s$ which coincides with the norm \eqref{Eq:NormRK1}.
Therefore, this situation is a particular case of computing the kernel associated with a closed subspace $G$ of an initial reproducing Hilbert space $H$. In such a case,  the orthogonal sum $G \oplus G^\perp$ implies that $K_H = K_{G} + K_{G^\perp}$ (see \cite[Section 1.3]{berlinet}). Let $p$ be the orthogonal projection on $G$, then the reproducing kernel $K_G$ can be expressed as follows
\begin{equation}
K_{G}(s,t) = \langle p(K(s,\cdot)), p(K(t,\cdot))\rangle \,. 
\end{equation}
In our setting, we have $G=H_0^2([0,1])$. For an explicit expression of the kernel, one has, following for instance \cite[Section 1.2]{berlinet},
\begin{equation}
K_{G}(s,t) = \frac{\begin{vmatrix}
  K(s,t) & K(1,t) & \partial_1K(1,t) \\
  K(s,1) & K(1,1) & \partial_1 K(1,1) \\
  \partial_2 K(s,1)& \partial_2 K(1,1) & \partial_{1,2} K(1,1)
 \end{vmatrix}}{\begin{vmatrix}
  K(1,1) & \partial_1 K(1,1) \\
  \partial_2 K(1,1) & \partial_{1,2} K(1,1)
 \end{vmatrix}}\,.
\end{equation}
More explicitely, this gives
\begin{equation}
K_{G}(s,t) = K(s,t) +\left(-1 + \frac{1}{2}(s+t) - \frac{1}{3}ts\right)(ts)^2 \,.
\end{equation}

\bibliographystyle{plain}      
\bibliography{articles,SecOrdLandBig}   

\end{document}